\documentclass[11pt]{amsart}
\usepackage{amssymb,amsmath,hyperref,color,enumitem,caption,comment}
\usepackage{fullpage}
\usepackage[noadjust]{cite}
\usepackage[abs]{overpic}
\usepackage{cleveref}
\usepackage{tikz}
\usepackage{tikz-cd}
\usepackage{rotating}

\usetikzlibrary{calc}
\usetikzlibrary{arrows,decorations.markings}
% % % % % % % % % % % % % % % % % % %
%\usetikzlibrary{external}
%\tikzexternalize
% % % % % % % % % % % % % % % % % % %
\pdfoutput=1

\newtheorem{thm}{Theorem}[section]
\newtheorem*{mainirredthm}{Theorem~\ref{thm:main_irred}}

\newtheorem{cor}[thm]{Corollary}
\newtheorem{lem}[thm]{Lemma}
\newtheorem{prop}[thm]{Proposition}
\newtheorem{conj}[thm]{Conjecture}
\newtheorem{ques}[thm]{Question}

\theoremstyle{definition}
\newtheorem{ex}[thm]{Example}
\newtheorem{defn}[thm]{Definition}

\newtheorem{rem}[thm]{Remark}

\newcommand{\bbA}{\mathbb{A}}
\newcommand{\bbC}{\mathbb{C}}

\newcommand{\bbN}{\mathbb{N}}
\newcommand{\bbP}{\mathbb{P}}
\newcommand{\bbQ}{\mathbb{Q}}

\newcommand{\bbZ}{\mathbb{Z}}

\newcommand{\calB}{\mathcal{B}}
\newcommand{\calC}{\mathcal{C}}
\newcommand{\calD}{\mathcal{D}}

\newcommand{\calM}{\mathcal{M}}

\newcommand{\calO}{\mathcal{O}}

\newcommand{\calS}{\mathcal{S}}
\newcommand{\calT}{\mathcal{T}}

\newcommand{\OK}{\calO_K}

\newcommand{\Kbar}{\overline{K}}

\newcommand{\calDbar}{\overline{\calD}}
\newcommand{\calObar}{\overline{\calO}}

\newcommand{\QQbar}{\overline{\bbQ}}
\newcommand{\ZZbar}{\overline{\bbZ}}

\renewcommand{\hbar}{\overline{h}}

\newcommand{\frakp}{\mathfrak{p}}
\newcommand{\frakq}{\mathfrak{q}}

\newcommand{\Gal}{\operatorname{Gal}}

\newcommand{\Spec}{\operatorname{Spec}}

\newcommand{\PrePer}{\operatorname{PrePer}}

\newcommand{\disc}{\operatorname{disc}}
\newcommand{\Aut}{\operatorname{Aut}}

\newcommand{\dyn}{\operatorname{dyn}}
\newcommand{\Xdyn}{X^{\dyn}}

\newcommand{\Ell}{\operatorname{ell}}
\newcommand{\Xell}{X^{\Ell}}

\newcommand{\f}{\boldsymbol{f}}
\newcommand{\G}{\mathbf{G}}
\renewcommand{\H}{\mathbf{H}}
\newcommand{\x}{\mathbf{x}}
\renewcommand{\P}{\boldsymbol{P}}
\newcommand{\Q}{\boldsymbol{Q}}
\newcommand{\T}{\mathbf{T}}

\newcommand{\balpha}{\boldsymbol{\alpha}}

\renewcommand{\tilde}{\widetilde}
\renewcommand{\epsilon}{\varepsilon}
\renewcommand{\phi}{\varphi}

\renewcommand{\labelenumi}{(\Alph{enumi})}

\usepackage{verbatim}

\allowdisplaybreaks

\numberwithin{equation}{section}

\title[Dynamical Modular Curves]{Dynamical Modular Curves for Quadratic Polynomial Maps}
\author{John R. Doyle}
\address{Department of Mathematics \\
University of Rochester \\
Rochester, NY 14627}
\curraddr{Mathematics \& Statistics Department \\
Louisiana Tech University \\
Ruston, LA 71272}
\email{jdoyle@latech.edu}

\begin{document}

\begin{abstract}
Motivated by the dynamical uniform boundedness conjecture of Morton and Silverman, specifically in the case of quadratic polynomials, we give a formal construction of a certain class of dynamical analogues of classical modular curves. The preperiodic points for a quadratic polynomial map may be endowed with the structure of a directed graph satisfying certain strict conditions; we call such a graph {\it admissible}. Given an admissible graph $G$, we construct a curve $X_1(G)$ whose points parametrize quadratic polynomial maps --- which, up to equivalence, form a one-parameter family --- together with a collection of marked preperiodic points that form a graph isomorphic to $G$. Building on work of Bousch and Morton, we show that these curves are irreducible in characteristic zero, and we give an application of irreducibility in the setting of number fields. We end with a discussion of the Galois theory associated to the preperiodic points of quadratic polynomials, including a certain Galois representation that arises naturally in this setting.
\end{abstract}

\maketitle

\section{Introduction}\label{sec:intro}

Let $K$ be a field, and let $\phi(x) \in K(x)$ be a rational function of degree $d \ge 2$, viewed as a self-map of $\bbP^1(K)$. For an integer $n \ge 0$, we denote by $\phi^n$ the $n$-fold composition of $\phi$; that is, $\phi^0$  is the identity map, and $\phi^n = \phi \circ \phi^{n-1}$ for each $n \ge 1$. Given $\alpha \in K$, the \textbf{orbit} of $\alpha$ under $\phi$ is the set
	\[
		\calO_\phi(\alpha) := \{\phi^n(\alpha) : n \ge 0\}.
	\]
We say that $\alpha \in K$ is \textbf{preperiodic} for $\phi$ if $|\calO_\phi(\alpha)| < \infty$. In this case, there exist integers $M \ge 0$ and $N \ge 1$ such that $\phi^{M+N}(\alpha) = \phi^M(\alpha)$. If $M$ and $N$ are minimal with this property, we call $M$ the \textbf{preperiod} and $N$ the \textbf{eventual period} of $\alpha$ under $\phi$, and we refer to the pair $(M,N)$ as the \textbf{preperiodic portrait} (or simply \textbf{portrait}) of $\alpha$. We further define $\alpha$ to be \textbf{periodic} (resp., \textbf{strictly preperiodic}) if the preperiod of $\alpha$ is zero (resp., positive). We denote by $\PrePer(\phi,K)$ the set of $K$-rational preperiodic points for $\phi$. The set $\PrePer(\phi,K)$ comes naturally equipped with the structure of a directed graph, with an edge $\alpha \to \beta$ if and only if $\phi(\alpha) = \beta$, and we denote this graph by $G(\phi,K)$. We illustrate this with an example in Figure~\ref{fig:graph_ex}.

\begin{rem}
The point at infinity is a fixed point for every polynomial map. Following the convention of \cite{poonen:1998} (also used in \cite{doyle:quads,doyle/faber/krumm:2014}), we omit the vertex $\infty$ from the graph $G(\phi,K)$ when $\phi$ is a polynomial, as in Figure~\ref{fig:graph_ex}.
\end{rem}

One of the major motivations in the area of arithmetic dynamics is the analogy between preperiodic points for rational maps and torsion points on abelian varieties. Northcott \cite{northcott:1950} showed that if $\phi$ is a rational map of degree at least 2 defined over a number field $K$, then $\PrePer(\phi,\Kbar)$ is a set of bounded height, hence $\PrePer(\phi,K)$ is finite. Motivated by the strong uniform boundedness conjecture for elliptic curves --- known at the time for low degree number fields by the work of Mazur, Kamienny, and Abramovich \cite{mazur:1977,kamienny:1992,kamienny/mazur:1995,abramovich:1995}, and subsequently proven in full generality by Merel \cite{merel:1996} --- Morton and Silverman have suggested the following dynamical analogue:

\begin{conj}[{\cite[p. 100]{morton/silverman:1994}}]\label{conj:ubc}
Let $n \ge 1$ and $d \ge 2$. There exists a constant $B(n,d)$ such that if $[K:\bbQ] = n$ and $\phi(x) \in K(x)$ has degree $d$, then $|\PrePer(\phi,K)| \le B(n,d)$.
\end{conj}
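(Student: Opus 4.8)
The statement to be ``proved'' is Conjecture~\ref{conj:ubc}, which is of course open; it is the dynamical mirror of the strong uniform boundedness theorem for torsion on elliptic curves, and what I can offer is the program one would follow rather than a proof. The plan is to transport the Mazur--Kamienny--Merel strategy into the dynamical setting using exactly the modular curves this paper constructs. First I would reduce to the one-parameter family: in the target case $d = 2$, every quadratic polynomial over $K$ is linearly conjugate to a unique $f_c(x) = x^2 + c$, so $\PrePer(\phi, K)$ depends only on $c \in K$, and it suffices to bound $|\PrePer(f_c, K)|$ uniformly in $c$ and in $n = [K:\bbQ]$. Next, since the graph $G(f_c,K)$ consists of finitely many cycles with rooted preperiodic trees attached, and since the number of periodic cycles of a given degree map is itself bounded (Morton--Silverman), it is enough to produce a bound $C(n)$ such that no $f_c$ has a $K$-rational point of exact period $N > C(n)$ and no $K$-rational strictly preperiodic point of preperiod $M > C(n)$; equivalently, only finitely many admissible graphs $G$ are realizable over degree-$n$ fields.

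The translation step is where the curves $X_1(G)$ enter: a $K$-point of $f_c$ of a prescribed portrait (in particular, of period $N$) is precisely a $K$-point of the appropriate $X_1(G)$ — a dynatomic modular curve when $G$ is a single $N$-cycle — lying over $c$. So the conjecture for $d=2$ becomes the assertion that for each $n$ the curve $X_1(G)$ has no points of degree $\le n$ over $\bbQ$ once $G$ is sufficiently large, which is where I would deploy (a) a lower bound showing the gonality of $X_1(G)$ grows without bound as $G$ grows, so that by Frey-- and Abramovich-type arguments the degree-$\le n$ points are not Zariski dense, and then (b) a dynamical analogue of Kamienny's formal immersion criterion, applied at a well-chosen prime of good reduction, to upgrade ``finitely many'' to ``none'' beyond an explicit threshold; step (b) requires controlling the reduction of $X_1(G)$ and the Mordell--Weil rank of its Jacobian $\Jac(X_1(G))$ over $\bbQ$, presumably via some Eisenstein-quotient substitute.

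The main obstacle is essentially the whole of this last step, and it is a serious one: the full statement is unknown even for $d = 2$ and $n = 1$, the dynamical analogue of Mazur's theorem. Ruling out a rational point of period $N$ for $f_c$ is known only for small $N$ (Morton for $N=4$, Flynn--Poonen--Schaefer for $N=5$, Stoll conditionally for $N=6$), and those arguments are case-by-case analyses of the specific low-genus dynatomic curves, not instances of a uniform mechanism. What is missing is any dynamical counterpart of the winding quotient / Mazur's Eisenstein ideal that would force the relevant quotients of $\Jac(X_1(G))$ to have finite Mordell--Weil group, and any uniform lower bound on the gonality of the dynatomic curves; supplying either of these — or some entirely different input strong enough to emulate Merel's theorem — is precisely the crux. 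The irreducibility result proved in this paper, that $X_1(G)$ is geometrically irreducible in characteristic zero, is a genuine prerequisite for this program, since it is what licenses one to speak of ``the'' curve $X_1(G)$, its genus, and its Jacobian in the first place, but it is only a first structural step and does not by itself move the conjecture.
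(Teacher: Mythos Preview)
The statement is a conjecture, not a theorem, and the paper does not prove it; it is stated as open and serves only as motivation. You recognize this explicitly and offer a program rather than a proof, so there is no ``paper's proof'' to compare against. Your outline of the Mazur--Kamienny--Merel strategy transported to dynatomic curves is accurate as a description of what one would \emph{want} to do, and your assessment of the obstacles (no dynamical Eisenstein ideal or winding quotient, no uniform gonality bound, case-by-case results only for $N\le 6$) matches the state of the art as the paper itself summarizes it. The one small overstatement is the parenthetical ``the number of periodic cycles of a given degree map is itself bounded (Morton--Silverman)'': Morton--Silverman gives a bound on cycle lengths in terms of primes of good reduction, not a uniform bound on the number of cycles independent of $c$; the latter would already be a major step toward the conjecture. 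Otherwise your discussion is a fair survey of a program that remains open, and you are right that the irreducibility theorem proved here is a prerequisite but not itself progress on the bound.
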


\begin{figure}
\centering
%    \begin{tikzpicture}[scale=.6]
%\tikzset{vertex/.style = {}}
%\tikzset{every loop/.style={min distance=10mm,in=45,out=-45,->}}
%\tikzset{edge/.style={decoration={markings,mark=at position 1 with %
%    {\arrow[scale=1.5,>=stealth]{>}}},postaction={decorate}}}
%% vertices
%\node[vertex, label={left:$-\frac{7}{4}$}] (3a) at  (0,0) {$\bullet$};
%\node[vertex, label={left:$\frac{5}{4}$}] (3b) at  (0,2) {$\bullet$};
%\node[vertex, label={above:$-\frac{1}{4}$}] (3c) at  (1.7,1) {$\bullet$};
%%
%\node[vertex, label={left:$\frac{1}{4}$}] (13a) at  (-1,-1.7) {$\bullet$};
%\node[vertex, label={left:$\frac{7}{4}$}] (13b) at (-1,3.7) {$\bullet$};
%\node[vertex, label={above:$-\frac{5}{4}$}] (13c) at (3.7, 1) {$\bullet$};
%%
%\node[vertex, label={right:$\frac{3}{4}$}] (23a) at (5.4, 2) {$\bullet$};
%\node[vertex, label={right:$-\frac{3}{4}$}] (23b) at (5.4, 0) {$\bullet$};
%% edges
%\draw[edge] (3a) to (3b);
%\draw[edge] (3b) to (3c);
%\draw[edge] (3c) to (3a);
%\draw[edge] (13a) to (3a);
%\draw[edge] (13b) to (3b);
%\draw[edge] (13c) to (3c);
%\draw[edge] (23a) to (13c);
%\draw[edge] (23b) to (13c);
%\end{tikzpicture}
	\includegraphics{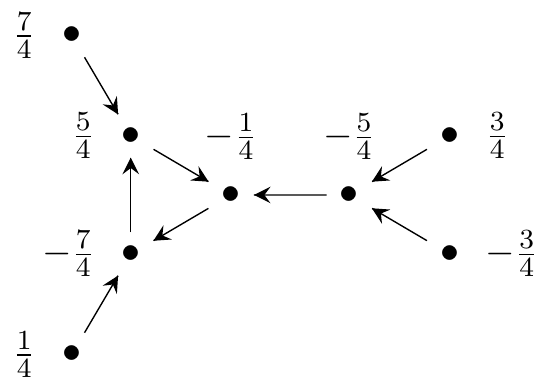}
	\caption{The graph $G(z^2 - 29/16, \bbQ)$}
	\label{fig:graph_ex}
\end{figure}

Conjecture~\ref{conj:ubc} is not currently known to hold for any $n \ge 1$ and $d \ge 2$, even if we further restrict our attention to \emph{polynomials} of degree $d$. The simplest polynomial case --- namely, quadratic polynomials over $\bbQ$, corresponding to the case $(n,d) = (1,2)$ --- is still far from being solved, though there has been significant progress in this direction. Before describing what is currently known, we introduce some notation.

Two polynomial maps $\phi, \psi \in K[x]$ are {\bf linearly equivalent} if there exists a polynomial $\ell(x) = ax + b$ with $a,b \in K$ and $a \ne 0$ such that $\psi = \ell^{-1} \circ \phi \circ \ell$. This is the appropriate notion of equivalence in a dynamical setting since conjugation commutes with iteration; in particular, $\ell$ induces a directed graph isomorphism $G(\psi,K) \to G(\phi,K)$. Every quadratic polynomial defined over a number field $K$ is equivalent to a unique polynomial of the form $f_c(x) := x^2 + c$ with $c \in K$. Therefore, when discussing Conjecture~\ref{conj:ubc} for quadratic polynomials, it suffices to consider only maps of the form $f_c$. For reference, we restate the Morton-Silverman conjecture for quadratic polynomials.

\begin{conj}\label{conj:ubc_quads}
Let $n \in \bbN$. There exists a constant $B(n)$ such that if $[K:\bbQ] = n$ and $c \in K$, then $|\PrePer(f_c,K)| \le B(n)$.
\end{conj}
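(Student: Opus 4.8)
This is the Morton--Silverman conjecture in its simplest polynomial instance, and it is open; what follows is the program suggested by the analogy with torsion of elliptic curves rather than a proof. Fix $n$. I would first reduce the problem to the geometry of the curves built in this paper. Since $\PrePer(f_c,K)$ is finite by Northcott's theorem and its isomorphism type as an admissible graph is, by the structure theory of admissible graphs for quadratic polynomials, controlled up to bounded ambiguity by the set of preperiodic portraits $(M,N)$ that occur in it, a uniform bound $B(n)$ would follow from the finiteness of the set $S_n$ of portraits $(M,N)$ such that $f_c$ has a preperiodic point of portrait $(M,N)$ for some $c$ lying in a number field of degree $\le n$ over $\bbQ$. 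A portrait lies in $S_n$ precisely when the curve $X_1(M,N)$ parametrizing a single marked point of that portrait has a point of degree $\le n$ over $\bbQ$ (ignoring the degenerate locus). So Conjecture~\ref{conj:ubc_quads} reduces to: all but finitely many of the curves $X_1(M,N)$ have no such points outside the cusps.

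Next I would establish the geometric half of the program: that the genus, and the $\bbQ$-gonality, of $X_1(M,N)$ grow without bound. The irreducibility theorem proved in this paper --- following Bousch and Morton --- is the prerequisite that makes $X_1(M,N)$ a bona fide curve with a well-defined Jacobian and gonality; on top of it one would push genus lower bounds through the forgetful covers $X_1(G) \to X_1(G')$ (deleting marked points) via Riemann--Hurwitz, and then convert these into gonality lower bounds using an inequality of gonality against genus, or a reduction-mod-$p$ point-counting bound. With gonality growth in hand, Frey's theorem on curves with infinitely many points of bounded degree shows that each sufficiently large $X_1(M,N)$ has only \emph{finitely} many points of degree $\le n$ --- but \emph{finitely many} is not \emph{none}, and sporadic points of low degree are a genuine phenomenon on the classical curves $X_1(N)$ as well, so this alone does not make $S_n$ finite.

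The hard part, and the real obstruction, is upgrading ``finitely many'' to ``only the cusps''. In the classical setting this is the Kamienny--Merel method: exhibit a quotient abelian variety of $\Jac(X_1(N))$ that has rank zero over every field of degree $\le n$ --- uniformly in $N$ and in the field, via the Eisenstein ideal and Merel's winding quotient --- and then run a formal-immersion argument at a prime of good reduction to force any hypothetical non-cuspidal point down onto a cusp. The dynamical analogue would require, first, analogues of Hecke operators and of the winding quotient acting on $\Jac(X_1(M,N))$, and, more fundamentally, some handle on the Mordell--Weil ranks $\rk \Jac(X_1(M,N))(\bbQ)$, or on rank-zero quotients thereof, uniformly as the portrait grows. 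None of this is available: the Jacobians of the dynamical modular curves carry no known automorphic structure, so there is at present no mechanism --- not even a heuristic one --- for producing the rank-zero quotients the argument needs. That gap, and not the geometry, is why Conjecture~\ref{conj:ubc_quads} remains open even for $n = 1$.
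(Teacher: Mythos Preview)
The statement you were asked to prove is labeled as a \emph{conjecture} in the paper, not a theorem; the paper does not attempt a proof of it, and indeed explicitly notes that it ``is not currently known to hold for any $n \ge 1$ and $d \ge 2$.'' You recognized this immediately and said so in your first sentence, so there is no error to flag: there is simply no paper proof to compare against.

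Your write-up is therefore not a proof attempt but a sketch of the Kamienny--Mazur--Merel program transported to the dynamical setting, together with an honest account of why the transport breaks down (no Hecke structure, no winding quotient, no mechanism for producing rank-zero quotients of $\Jac X_1((M,N))$). This is a reasonable and accurate summary of the state of the problem. One small terminological point: in the paper's notation the curve parametrizing a marked point of portrait $(M,N)$ is written $X_1((M,N))$, with double parentheses, since $X_1(M,N)$ is reserved for a different object; and the relevant reduction is not quite to portraits $(M,N)$ alone but to the admissible graphs $G$ and their curves $X_1(G)$, since the portrait data of a single point does not by itself bound $|\PrePer(f_c,K)|$. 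But these are cosmetic; the substance of your assessment --- that the geometric half (irreducibility, genus and gonality growth) is within reach while the arithmetic half (rank control) is the genuine obstruction --- is correct.
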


The best known result toward Conjecture~\ref{conj:ubc_quads} in the case $n = 1$ is the following theorem of Poonen, which may be considered a dynamical analogue of Mazur's classification theorem \cite{mazur:1977} for rational torsion subgroups of elliptic curves over $\bbQ$:

\begin{thm}[{Poonen \cite[Cor. 1]{poonen:1998}}]\label{thm:poonen}
Let $c \in \bbQ$. If $f_c$ does not admit rational points of period greater than $3$, then $|\PrePer(f_c,\bbQ)| \le 9$. Moreover, under these assumptions, $G(f_c,\bbQ)$ is isomorphic to one of the twelve directed graphs appearing in \cite[Fig. 1]{poonen:1998}
\end{thm}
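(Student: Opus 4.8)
The plan is to classify all directed graphs that can arise as $G(f_c,\bbQ)$ under the hypothesis, by analyzing three things in turn: (a) which cycles occur, and with what multiplicities; (b) how deep and how branchy the trees of strictly preperiodic points attached to those cycles can be; and (c) which of these features can occur simultaneously for a single $c \in \bbQ$. The structural fact that makes this feasible is $f_c(-x) = f_c(x)$: a rational point $\beta$ has preimages $\pm\sqrt{\beta-c}$, so $\beta \neq c$ has either no rational preimages or exactly two, namely $\pm t$ with $t^2 = \beta - c$; moreover any periodic point $\alpha$, with in-cycle preimage $\alpha'$, automatically acquires the rational preimage $-\alpha'$, which is a new strictly preperiodic point unless the cycle passes through $0$. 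Hence the graph is obtained from its cycles by a rigid ``doubling'' process, and the whole problem reduces to controlling how often this doubling produces a fresh rational square.

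For (a): there are at most $2$ rational fixed points (existing exactly when $1-4c$ is a square in $\bbQ$), at most one rational $2$-cycle (existing exactly when $-3-4c$ is a square in $\bbQ$), and at most one rational $3$-cycle --- a rational $3$-cycle corresponds to a rational point on the period-$3$ dynatomic curve, which has genus $0$ with a rational point, hence infinitely many, so infinitely many $c$ qualify, but two rational $3$-cycles for one $c$ would force a rational point on a higher-genus fiber product. The hypothesis rules out every cycle of length $>3$; one knows moreover, from Morton and from Flynn--Poonen--Schaefer, that the period-$4$ and period-$5$ curves, of genus $\ge 2$, carry no rational points giving genuine cycles, but this input is not needed for the conditional statement. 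What remains is to bound the number of coexisting cycles and the sizes of the attached trees.

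For (b) and (c): each additional tree layer, and each pair of features one wishes to place on the same $f_c$, is cut out by a curve --- a fiber product of the square-conditions above with the relevant dynatomic conditions, i.e.\ a dynamical modular curve for a small sub-configuration. One computes the genus of each such curve and determines its rational points completely: genus-$0$ curves are parametrized and feed into the next layer, genus-$1$ curves are handled by computing Mordell--Weil groups, and genus-$\ge 2$ curves by Chabauty--Coleman or elliptic-curve Chabauty (Faltings gives only finiteness, whereas one needs an effective and provably exhaustive list). The conclusions are that every tail has bounded depth and bounded branching, that a rational $2$-cycle and a rational $3$-cycle cannot coexist, and similar exclusions, so only finitely many graphs survive. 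Enumerating the survivors yields exactly the twelve graphs of \cite[Fig. 1]{poonen:1998}, and inspecting that list shows every one has at most $9$ vertices, hence $|\PrePer(f_c,\bbQ)| \le 9$.

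The main obstacle is step (c): the relevant fiber-product curves have genus $\ge 2$, so finiteness of rational points alone is not enough --- one needs a successful Chabauty-type computation on each, and then a check that the finitely many rational points found correspond to actual configurations rather than to cusps, to collisions where distinct marked points coincide, or to places where a marked point runs off to $\infty$. The real work is organizational: identifying a minimal list of sub-configurations whose curves must be analyzed, so that this case-check is finite and each individual case is computationally tractable.
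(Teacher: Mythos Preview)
The paper does not prove this statement at all: Theorem~\ref{thm:poonen} is stated as a cited result of Poonen \cite[Cor.~1]{poonen:1998}, with no argument given. There is therefore nothing in the paper to compare your proposal against.

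That said, your outline is a faithful high-level summary of Poonen's actual method in \cite{poonen:1998}: he does exactly what you describe --- use the involution $x\mapsto -x$ to reduce the structure of $G(f_c,\bbQ)$ to a question about rational points on a finite list of dynamical modular curves (fiber products of dynatomic curves with square conditions), compute their genera, and for the genus-$1$ and genus-$\ge 2$ cases determine the rational points by Mordell--Weil computations and Chabauty-type arguments, respectively. Your caveat that the genus-$\ge 2$ cases require effective determination of rational points (not just Faltings) is exactly right, and is where most of the work in \cite{poonen:1998} lies. One minor point: your assertion ``at most one rational $3$-cycle'' is itself one of the things that must be proven in step~(c), via the curve parametrizing two disjoint $3$-cycles; you correctly flag this, but it should not be listed among the inputs in step~(a).
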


It was conjectured in \cite{flynn/poonen/schaefer:1997} that if $c \in \bbQ$, then $f_c$ has no rational points of period greater than three; if this holds, then Poonen's result would give the first complete case of Conjecture~\ref{conj:ubc_quads}. Thus far, it has been shown that $f_c$ cannot admit rational points of period four (Morton \cite{morton:1998}), period five (Flynn-Poonen-Schaefer \cite{flynn/poonen/schaefer:1997}), or --- assuming certain standard conjectures on $L$-series of curves --- period six (Stoll \cite{stoll:2008}). Moreover, Hutz and Ingram \cite{hutz/ingram:2013} have gathered a significant amount of experimental data to support non-existence of rational points of period greater than three.

Some progress toward Conjecture~\ref{conj:ubc_quads} has also been made in the case $n = 2$. Computational evidence in \cite{hutz/ingram:2013, doyle/faber/krumm:2014} suggests the following:

\begin{conj}\label{conj:ubc2}
Let $K$ be a quadratic field, and let $c \in K$. Then $G(f_c,K)$ is isomorphic to one of the $46$ directed graphs appearing in \cite[App. B]{doyle/faber/krumm:2014}. In particular, $|\PrePer(f_c,K)| \le 15$.
\end{conj}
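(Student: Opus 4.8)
The plan is to recast Conjecture~\ref{conj:ubc2} as a statement about quadratic points on the dynamical modular curves $X_1(G)$ and then to follow the template of Poonen's proof of Theorem~\ref{thm:poonen}, but over quadratic fields. First, one would use the structure theory of the graphs $G(f_c,K)$ developed in \cite{poonen:1998,doyle:quads} to show that, outside of the $46$ graphs in question, every admissible $G$ contains one of a finite, explicit list of \emph{forbidden} subgraphs $G_1,\dots,G_r$ --- each recording one extra cycle, one longer cycle, or one longer preperiodic tail than the classification permits. Since a quadratic-field pair $(f_c,K)$ for which $G(f_c,K)$ is not isomorphic to one of the $46$ must realize some $G_i$ as a subgraph, and such a realization is precisely a $K$-point of $X_1(G_i)$, it then suffices to prove that each $X_1(G_i)$ has no point over any quadratic field, together with the (routine, computational) reverse direction that each of the $46$ graphs really is realized by some $f_c$ over some quadratic $K$.

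To bound the quadratic points on a fixed $X = X_1(G_i)$ --- which is geometrically irreducible in characteristic zero by Theorem~\ref{thm:main_irred} --- I would pass to its smooth projective model and split on the genus $g = g(X)$. When $g \le 1$ the curve carries infinitely many quadratic points, so these cases must be handled at the level of the enumeration: one should verify that the list of $46$ is calibrated precisely so that every \emph{threshold} graph $G_i$ satisfies $g(X_1(G_i)) \ge 2$. When $g \ge 2$, Faltings's theorem shows $X(\bbQ)$ is finite, and the theory of quadratic points on curves (Harris--Silverman, Abramovich--Harris) shows that the quadratic points of $X$ are finite in number unless $X$ admits a degree-two map to $\bbP^1$ (so $X$ is hyperelliptic), or a degree-two map to an elliptic curve over $\bbQ$ of positive Mordell--Weil rank (so $X$ is bielliptic with a positive-rank quotient). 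In those two exceptional situations one checks directly that the marked points of a pulled-back quadratic point always collapse onto an orbit already present in the classification, so that no new graph arises; in the remaining cases one must provably enumerate all quadratic points, using the Mordell--Weil sieve together with (quadratic) Chabauty applied to $X$, to its symmetric square, or to a suitable quotient, and confirm that each gives a subgraph of one of the $46$.

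The main obstacle is this last step. Writing down explicit models for the curves $X_1(G_i)$ becomes delicate as the $G_i$ grow, and proving that one has found every quadratic point on a curve of genus $4$, $5$, or more lies at --- or beyond --- the current frontier of effective Chabauty-type methods. This is the same barrier encountered in the rational case, where merely excluding period-$6$ points already forces one to assume standard conjectures on $L$-functions of curves (Stoll \cite{stoll:2008}); so in practice this program establishes Conjecture~\ref{conj:ubc2} only conditionally --- with the genus, gonality, and infinite-family bookkeeping unconditional, but several of the finite point determinations resting on the Birch--Swinnerton-Dyer conjecture or on Chabauty computations whose hypotheses are not known to hold. A secondary difficulty, logically prior to everything above, is proving that the list $G_1,\dots,G_r$ is genuinely finite and complete: this requires an a priori bound on the period, eventual period, and branching of $G(f_c,K)$ for quadratic $K$ --- a nontrivial combinatorial-dynamical input in the spirit of Poonen's analysis, without which one cannot even reduce to finitely many curves.
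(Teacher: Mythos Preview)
The statement you are trying to prove is labeled \texttt{conj} in the paper --- it is Conjecture~\ref{conj:ubc2}, an open problem, and the paper offers no proof of it whatsoever. The paper merely states the conjecture, cites computational evidence from \cite{hutz/ingram:2013,doyle/faber/krumm:2014}, and mentions that partial progress appears in \cite{doyle/faber/krumm:2014,doyle:quads,doyle/krumm/wetherell}. So there is nothing to compare your proposal against.

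What you have written is not a proof but a research program, and to your credit you recognize this: you explicitly say the outcome would be conditional on BSD or on Chabauty hypotheses not known to hold, and you flag as ``logically prior'' the need for an \emph{a priori} bound on periods and tail lengths over quadratic fields --- which is itself an open problem (indeed, already the nonexistence of rational points of period $6$ is only known conditionally by \cite{stoll:2008}, and nothing is known unconditionally for periods $\ge 6$ over $\bbQ$, let alone over all quadratic fields). Your outline is a reasonable description of the shape such a program would take, and is in the spirit of the partial results in the cited references, but it does not constitute a proof, and the paper does not claim one exists.
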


In addition to the computational evidence referenced above, there has been considerable progress made in the direction of Conjecture~\ref{conj:ubc2} in recent \cite{doyle/faber/krumm:2014, doyle:quads} and ongoing \cite{doyle/krumm/wetherell} work.

Just as the theory of modular curves played a crucial role in the proof of the uniform boundedness theorem for elliptic curves, one may define \emph{dynamical} modular curves in order to work toward Conjecture~\ref{conj:ubc_quads}. The classical modular curve $\Xell_1(N)$ parametrizes elliptic curves together with a marked torsion point of order $N$; analogously, one defines a curve $X_1(N) = \Xdyn_1(N)$ which parametrizes quadratic polynomials together with a marked point of period $N$. The curves $X_1(N)$, which we define in \textsection\ref{sec:dynatomic}, have been studied extensively since the 1980's, beginning with the work of Douady and Hubbard \cite{douady/hubbard:1985} and continuing, for example, in \cite{bousch:1992,lau/schleicher:1994,buff/lei:2014,morton:1996}. The main results of \cite{morton:1998,flynn/poonen/schaefer:1997,stoll:2008} mentioned above involved finding all rational points on the curves $X_1(N)$ with $N \in \{4,5,6\}$.

However, for providing a classification as in Theorem~\ref{thm:poonen} or Conjecture~\ref{conj:ubc2}, one needs a more general notion of a dynamical modular curve which parametrizes quadratic polynomial maps with {\it several} marked preperiodic points. The natural structure to encode a collection of preperiodic points for a given map is a directed graph, and because we are interested in graphs arising from quadratic polynomial dynamics, our graphs must satisfy certain strict conditions. We specify these conditions in \textsection\ref{sec:admissible}, and we call a graph satisfying these conditions {\it admissible}. Given an admissible graph $G$, we define a curve $X_1(G)$ whose points parametrize maps $f_c$ together with a collection of preperiodic points for $f_c$ which {\it generate} a directed graph isomorphic to $G$. Though several such curves have been explicitly constructed and used in work toward Conjecture~\ref{conj:ubc_quads} (e.g., in \cite{poonen:1998,doyle/faber/krumm:2014,doyle:quads,doyle/krumm/wetherell,doyle:cyclotomic}), a formal treatment of these curves has not appeared in the literature. The purpose of this article is to give such a treatment.

\begin{rem}\label{rem:silverman}
As this article was being completed, Silverman \cite{silverman} released a preprint discussing (weighted) directed graphs, also referred to as {\it portraits}, associated more generally to the dynamics of morphisms on projective spaces of arbitrary dimension. A general theory of moduli spaces for dynamical systems together with such portraits will appear in \cite{doyle/silverman}.
\end{rem}

Our main result concerning dynamical modular curves is the following:

\begin{thm}\label{thm:main_irred}
Let $K$ be a field of characteristic zero. For any admissible graph $G$, the curve $X_1(G)$ is irreducible over $K$.
\end{thm}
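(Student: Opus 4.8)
The plan is to reduce the irreducibility of $X_1(G)$ for a general admissible graph $G$ to the already-established irreducibility of the dynatomic curves $X_1(N)$, by peeling off the vertices of $G$ one at a time and tracking how the function field grows at each step. First I would observe that since we may base-change freely among fields of characteristic zero (irreducibility over $K$ is equivalent to geometric irreducibility, i.e.\ irreducibility over $\overline{\bbQ}$, because the curves are defined over $\bbQ$), it suffices to prove the statement over $\bbQ$ or over $\bbC$; I would work over $\bbC$ so as to be able to invoke the analytic results of Douady--Hubbard and Bousch on the $X_1(N)$. The core input is the theorem (which I am assuming from the cited work of Bousch and Morton, and which will be recorded in \textsection\ref{sec:dynatomic}) that $X_1(N)$ is irreducible over $\bbC$ for every $N \ge 1$; equivalently, the $N$-th dynatomic polynomial $\Phi_N(x,c)$, suitably normalized, is irreducible in $\bbC[x,c]$, and its roots over $\bbC(c)$ generate a field in which the Galois group acts as large as possible.

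The main step is an inductive construction. Given an admissible graph $G$, I choose a vertex $v$ of $G$ that is a ``leaf'' in the appropriate sense --- either a strictly preperiodic point of maximal preperiod (a source with no preimages in $G$) or, if $G$ has no strictly preperiodic points, a point on a cycle --- and let $G'$ be the admissible graph obtained by deleting $v$ (and checking that admissibility is preserved, which is where the strict conditions defining admissibility in \textsection\ref{sec:admissible} are used). By induction $X_1(G')$ is irreducible over $K$, with function field $F' = K(X_1(G'))$ a regular extension of $K$. Now $X_1(G)$ sits over $X_1(G')$: adjoining the marked point $v$ amounts to adjoining a root of a polynomial equation over $F'$ --- either a factor of $\Phi_N(x,c)$ (if $v$ lies on an $N$-cycle not already present in $G'$, necessarily $N=1$ or $2$ in the pre-existing part by admissibility, but in general any $N$), or a solution of $f_c(x) = w$ for $w$ the already-marked image of $v$ (if $v$ is strictly preperiodic), i.e.\ $x^2 + c - w = 0$. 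In the periodic case I invoke the irreducibility of $X_1(N)$ together with the fact that the cycle structure of $G'$ imposes no constraint forcing reducibility --- more precisely, the compositum of $F'$ with the function field of $X_1(N)$ over $\bbC(c)$ is a field, because the Galois group of $\Phi_N$ over $\bbC(c)$ is the full wreath-type group computed by Bousch, and this is large enough to remain transitive after restriction to any subfield $F'$ arising from the other marked points. In the strictly preperiodic case, I must show $x^2 + (c - w)$ stays irreducible over $F'$, i.e.\ that $c - w$ is not a square in $F'$; this is a ramification/valuation argument, exhibiting a place of $F'$ (e.g.\ over a suitable point of $X_1(G')$ or at a cusp) at which $c - w$ has odd valuation.

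The step I expect to be the main obstacle is precisely controlling the Galois-theoretic interaction between the newly adjoined point and the part of the graph already built: one has to rule out ``accidental'' factorizations caused by coincidences among the various marked orbits. Concretely, for the periodic case this means knowing that the Galois group of $\Phi_N(x,c)$ over $\bbC(c)$ is not just transitive but has enough of a product/wreath structure --- as established by Bousch (for $N$ prime) and Morton --- so that the splitting field of $\Phi_N$ is linearly disjoint over $\bbC(c)$ from the function field of $X_1(G')$; and for the strictly preperiodic case it means producing, uniformly in $G$, a place at which the relevant ``difference of markings'' is a non-square. A clean way to handle both is to specialize $c$: pick a complex value $c_0$ (or a place of the relevant function field) at which the dynamical portrait of $f_{c_0}$ degenerates in a controlled way --- for instance where a marked cycle collides with another feature --- and check that the fiber of $X_1(G) \to X_1(G')$ stays irreducible there, e.g.\ by a monodromy computation near a branch point, which forces irreducibility of the generic fiber. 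I would set this up via the analytic theory: the curves $X_1(N)$ have well-understood cusps and critical points (Douady--Hubbard, Lau--Schleicher), and the monodromy action on the marked preperiodic points is explicitly a symmetric/wreath group, so the induced action on the extra marked point of $G$ relative to $G'$ is transitive, giving irreducibility of $X_1(G)$.

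Finally, I would address the base case(s) of the induction: the empty graph, where $X_1(G)$ is just the $c$-line $\bbA^1$, obviously irreducible; and the single-$N$-cycle graphs, where $X_1(G) = X_1(N)$, irreducible by the cited Bousch--Morton result. One subtlety to record carefully is that ``generate'' in the definition of $X_1(G)$ means the marked points generate a subgraph \emph{isomorphic} to $G$ --- so the parametrization is genuinely by $G$ and not by some graph containing $G$ --- and one must verify that imposing this ``exactly $G$, no more'' condition cuts out an open subscheme of the naive parameter space rather than changing its irreducible components; that follows because the loci where extra coincidences occur are proper closed subsets. With the inductive step and base cases in place, the theorem follows.
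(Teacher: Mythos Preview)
Your handling of the strictly preperiodic step --- removing a vertex of maximal preperiod and showing $x^2 + (c - w)$ remains irreducible over $F'$ --- matches the paper's Proposition~\ref{prop:same_length}; the paper uses Bousch's description of $\Gal(L_{M,N}/\bbC(t))$ (Theorem~\ref{thm:bousch_gen}) to produce an automorphism transposing the two new preimages while fixing everything already in $G'$, rather than a valuation argument, but the step is the same. The genuine gap is in the periodic step. First a structural point: deleting a single vertex from a cycle destroys admissibility, so your induction must remove an entire $N$-cycle together with the negatives of its vertices. More substantively, your claim that the wreath-product structure of $\Gal(L_N/\bbC(t))$ is ``large enough to remain transitive after restriction to any subfield $F'$'' is exactly the statement requiring proof and does not follow from knowing $\Gal(L_N/\bbC(t))$ in isolation. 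If $G'$ contains cycles of some other length $N' \ne N$, what you need is that $L_N$ and $L_{M',N'}$ are linearly disjoint over $\bbC(t)$, and nothing about the internal structure of either Galois group gives you that. Your monodromy/specialization idea is the right instinct, but it only succeeds if you can exhibit a branch point of $X_1(N) \to \bbP^1$ that is \emph{not} a branch point of $X_1(G') \to \bbP^1$, and you do not say how to find one.

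This is precisely the missing ingredient the paper supplies in Lemma~\ref{lem:branch} and Corollary~\ref{cor:branch}: for $N_1 \ne N_2$, the affine branch loci of $X_1((M_1,N_1)) \to \bbP^1$ and $X_1((M_2,N_2)) \to \bbP^1$ are disjoint. The proof uses two specific arithmetic-dynamical facts --- that distinct hyperbolic components of the Mandelbrot set never share a root (Douady--Hubbard), and that PCF parameters are algebraic integers while roots of hyperbolic components lie in $\QQbar \setminus \ZZbar$ --- neither of which is visible in your sketch. With this in hand, a short Riemann--Hurwitz argument (Lemma~\ref{lem:fiber_product_branch}) gives linear disjointness of the Galois closures, hence irreducibility of the fiber product. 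Accordingly the paper does not induct vertex-by-vertex across different cycle lengths: it first proves irreducibility for all graphs whose cycles have a \emph{single} length $N$ (working entirely inside $L_{M,N}$, where Bousch's theorem applies directly), and then glues the different $N$'s together as a fiber product via the branch-locus lemma.
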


In fact, the properties of preperiodic points used to prove Theorem~\ref{thm:main_irred} yield a stronger Galois-theoretic result. We state a version here and defer to Theorem~\ref{thm:rep_iso} for a more precise statement.

\begin{thm}\label{thm:main_galois}
Let $f_t(z) := z^2 + t \in \bbC(t)[z]$, and let $L$ be the extension of $\bbC(t)$ generated by all preperiodic points of $f_t$. Then $\Gal(L/\bbC(t))$ consists of all permutations of the preperiodic points of $f_t$ that commute with $f_t$.
\end{thm}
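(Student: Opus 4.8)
The plan is to bootstrap from the irreducibility statement of Theorem~\ref{thm:main_irred} to the Galois-theoretic statement, using the fact that the curves $X_1(G)$ realize every admissible configuration of marked preperiodic points. First I would set up the relevant groups: let $W$ denote the group of all permutations of the set $\PrePer(f_t,\overline{\bbC(t)})$ that commute with the action of $f_t$ (equivalently, graph automorphisms of the infinite preperiodic-point graph of $f_t$), and observe that $\Gal(L/\bbC(t))$ embeds in $W$ since the Galois action commutes with $f_t$ (which is defined over $\bbC(t)$). The goal is to show this embedding is surjective. Because the full preperiodic graph is the inverse limit of its finite admissible truncations (graphs "generated" by finitely many preperiodic points, in the sense of \textsection\ref{sec:admissible}), and $\Gal(L/\bbC(t))$ is the inverse limit of the corresponding finite Galois groups, it suffices to prove the statement for each finite admissible graph $G$: namely that $\Gal(L_G/\bbC(t))$ is the full automorphism group $\Aut(G)$ of the directed graph, where $L_G$ is the extension generated by the coordinates of the marked points in the corresponding configuration.

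Next I would translate this into a statement about the covering $X_1(G) \to X_1$, where $X_1$ is the $c$-line (the moduli of quadratic polynomials $f_c$). The key point is that for an admissible $G$, the function field $\bbC(X_1(G))$ contains, for each "generator" vertex, a coordinate function giving that marked preperiodic point, and $L_G = \bbC(X_1(G))$ is exactly the splitting field over $\bbC(t)$ of the system of equations cutting out the $G$-configuration. The deck transformations of (the Galois closure of) $X_1(G) \to X_1$ permute the marked points compatibly with $f_c$, hence land in $\Aut(G)$; irreducibility of $X_1(G)$ over $\bbC$ (Theorem~\ref{thm:main_irred}) forces this covering to have the "right" degree. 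The crucial combinatorial input is that $X_1(G)$ is irreducible not just for $G$ itself but for every admissible graph $G'$ that surjects onto $G$ by a graph morphism — in particular for graphs obtained by adjoining a single orbit, or by "unfolding" a symmetry of $G$. By comparing degrees $[\bbC(X_1(G')):\bbC(t)]$ against $|\Aut(G')|$ and against the index $[\Aut(G):\Aut(G')\text{-image}]$, the irreducibility of each $X_1(G')$ pins down the Galois group as the full $\Aut(G)$: any proper subgroup would force one of these intermediate curves to be reducible.

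I expect the main obstacle to be the bookkeeping needed to relate $\Aut(G)$ to the tower of admissible graphs and to verify that irreducibility of the $X_1(G')$ is exactly enough to rule out every proper subgroup of $\Aut(G)$. Concretely: $\Aut(G)$ for a quadratic-polynomial preperiodic graph is built from cyclic groups (rotating periodic cycles) and from the $\bbZ/2$'s coming from the two preimages $\pm\sqrt{\beta - c}$ of each point $\beta$; one must show that each such elementary "move" is independently realized by the Galois action, and that these moves generate. The rotation-of-cycles part should follow from irreducibility of the dynatomic curves $X_1(N)$ (the Galois group acting transitively on an $N$-cycle, together with the fact that it commutes with $f$, gives the full cyclic group), which is the Bousch–Morton input already invoked for Theorem~\ref{thm:main_irred}. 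The "$\pm$" part requires knowing that adjoining a square root $\sqrt{\beta-c}$ genuinely enlarges the field — i.e., that the relevant element of $\bbC(t)$ (or of the already-constructed subfield) is not a square — which again is precisely an irreducibility statement for the admissible graph obtained by adjoining that preimage. So the proof is essentially an organized induction on admissible graphs, with Theorem~\ref{thm:main_irred} supplying the base facts at each stage; the work is in arranging the induction so that "full automorphism group at level $G'$" propagates correctly to "full automorphism group at level $G$" for every morphism $G' \to G$, and then passing to the inverse limit.
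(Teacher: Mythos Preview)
Your high-level strategy matches the paper's: embed $\Gal(L/\bbC(t))$ into the graph-automorphism group, reduce to finite levels via an inverse limit, and show the finite-level Galois group is the full $\Aut(G)$. The execution of the finite-level step, however, differs in an instructive way.

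The paper does not run an induction over all admissible graphs using irreducibility as the inductive fuel. Instead it restricts to the cofinal subsystem of \emph{normal} graphs $G=\bigsqcup_i G_{M_i,N_i}$ (Definition~\ref{defn:normal}). For these, $K_G=\bbC(X_1(G))$ is already Galois over $\bbC(t)$, and the key lemma (Lemma~\ref{lem:galois}) is proved in two strokes: Bousch's theorem (Theorem~\ref{thm:bousch_gen}, packaged as Corollary~\ref{cor:gal_closure}) gives $\Gal(K_{G_{M,N}}/\bbC(t))\cong\Aut(G_{M,N})$ outright, and the branch-locus argument from the proof of Theorem~\ref{thm:main_irred} (Corollary~\ref{cor:branch} plus Lemma~\ref{lem:fiber_product_branch}) gives linear disjointness across distinct $N_i$, hence $\Gal(K_G/\bbC(t))\cong\prod_i\Aut(G_{M_i,N_i})\cong\Aut(G)$. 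The inverse limit then finishes.

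Your plan to rebuild $\Aut(G)$ from cycle-rotations and $\pm$-flips, each certified by irreducibility of a slightly larger $X_1(G')$, is correct and would work, but it is doing by hand what Bousch's theorem already supplies --- and since the paper's proof of Theorem~\ref{thm:main_irred} itself rests on Bousch, you gain no logical economy by avoiding it here. One point to tighten: your identification ``$L_G=\bbC(X_1(G))$ is the splitting field'' is only valid once $G$ is normal; for a general admissible $G$ the extension $\bbC(X_1(G))/\bbC(t)$ is not Galois, so your degree comparison against $|\Aut(G)|$ must be made at the normal closure. Making that restriction explicit essentially recovers the paper's argument.
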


This extends a result of Morton \cite{morton:1998gal}, who proved the analogous result for periodic points; see the discussion following the proof of Theorem~\ref{thm:main_irred} in \textsection\ref{sec:galois}.

We now give a brief outline of the article. In \textsection\ref{sec:dmc}, we formally define the dynamical modular curves $X_1(G)$ associated to admissible graphs $G$. Section~\ref{sec:dyn_mod} includes various properties of the curves $X_1(G)$, culminating in the proof of our main result (Theorem~\ref{thm:main_irred}). We then give an application of Theorem~\ref{thm:main_irred} in \textsection \ref{sec:realize}, and we end with a discussion of the Galois theory of preperiodic points, including a certain dynamical Galois representation, in \textsection\ref{sec:galois}.

\section*{Acknowledgments} This article originally began as the second chapter of my dissertation \cite{doyle:thesis} at the University of Georgia, though there have been many changes, improvements, and new results added since then. I would like to thank Bob Rumely, Pete Clark, Dino Lorenzini, and Robert Varley for several enlightening conversations as well as feedback on an early version of my thesis. I would like to thank Juan Rivera-Letelier for a number of helpful discussions, and I thank Joe Silverman and Tom Tucker for their comments and suggestions for improvements on an earlier draft.

\section{Dynamical modular curves}\label{sec:dmc}

\subsection{Dynatomic polynomials}\label{sec:dynatomic}

Let $N$ be a positive integer, and suppose that $\alpha,c \in K$ are such that $\alpha$ has period $N$ for $f_c(x)$. Then $(x,t) = (\alpha,c)$ is a solution to the equation $f_t^N(x) - x = 0$. However, this equation is also satisfied whenever $\alpha$ is periodic for $f_c$ with period a \emph{proper divisor} of $N$. One therefore defines the $N$th \textbf{dynatomic polynomial} to be the polynomial
	\[
		\Phi_N(x,t) := \prod_{n \mid N} \left(f_t^n(x) - x\right)^{\mu(N/n)} \in \bbZ[x,t],
	\]
where $\mu$ denotes the M\"obius function. That $\Phi_N$ is indeed a polynomial is shown in \cite[Thm. 4.5]{silverman:2007}, and the dynatomic polynomials yield a natural factorization
	\begin{equation}\label{eq:Ncycle}
		f_t^N(x) - x = \prod_{n \mid N} \Phi_n(x,t)
	\end{equation}
for each $N \in \bbN$ --- see \cite[p. 571]{morton/vivaldi:1995}. If $\Phi_N(\alpha,c) = 0$, we say that $\alpha$ has \textbf{formal period} $N$ for $f_c$. Every point of exact period $N$ has formal period $N$, but in some cases a point of formal period $N$ may have exact period $n$ a proper divisor of $N$. For example, $-1/2$ is a root of $\Phi_2(x,-3/4) = x^2 + x + 1/4$, thus has formal period 2 for $f_{-3/4}$; however, $-1/2$ is actually a fixed point for $f_{-3/4}$.

For each $N \in \bbN$, set
	\begin{align*}
		D(N) &:= \deg_x \Phi_N(x,t) = \sum_{n \mid N} \mu(N/n)2^n ; \\
		R(N) &:= \frac{D(N)}{N}.
	\end{align*}
The number $D(N)$ (resp., $R(N)$) represents the number of points of period $N$ (resp., periodic cycles of length $N$) for a general quadratic polynomial $f_c$ over $\QQbar$, excluding the fixed point at infinity in the case $N = 1$. The first few values of $D(N)$ and $R(N)$ are shown in Table~\ref{tab:d_and_r}.

\begin{table}
\centering
\renewcommand{\arraystretch}{1.5}
\caption{Small values of $D(N)$ and $R(N)$}
\label{tab:d_and_r}
\begin{tabular}{|c||c|c|c|c|c|c|c|c|c|c|}
\hline
	$N$ & 1 & 2 & 3 & 4 & 5 & 6 & 7 & 8 & 9 & 10\\
\hline
	$D(N)$ & 2 & 2 & 6 & 12 & 30 & 54 & 126 & 240 & 504 & 990 \\
\hline
	$R(N)$ & 2 & 1 & 2 & 3 & 6 & 9 & 18 & 30 & 56 & 99\\
\hline
\end{tabular}
\end{table}

Let $Y_1(N)$ be the affine plane curve defined by $\Phi_N(x,t) = 0$, which was shown to be irreducible over $\bbC$ by Bousch \cite[\textsection 3, Thm. 1]{bousch:1992}. We define $U_1(N)$ to be the Zariski open subset of $Y_1(N)$ on which $\Phi_n(x,t) \ne 0$ for each proper divisor $n$ of $N$, so that $(\alpha,c)$ lies on $Y_1(N)$ (resp., $U_1(N)$) if and only if $\alpha$ has formal (resp., exact) period $N$ for $f_c$. Since the curves $Y_1(N)$ are irreducible and distinct for all $N \in \bbN$ (by comparing degrees, for example), $U_1(N)$ is nonempty; hence $Y_1(N)$ is the closure of $U_1(N)$ in $\bbA^2$. We denote by $X_1(N)$ the normalization of the projective closure of $Y_1(N)$. 

\subsection{Generalized dynatomic polynomials}\label{sec:gen_dynatomic}

More generally, suppose $\alpha$ has preperiodic portrait $(M,N)$ for $f_c$ for some $M \ge 0$ and $N \ge 1$. In this case, we have $f_c^{M+N}(\alpha) - f_c^M(\alpha) = 0$; however, this equation is satisfied whenever $\alpha$ has portrait $(m,n)$ for some $0 \le m \le M$ and $n \mid N$. Therefore, for a pair of positive integers $M,N$, we define the \textbf{generalized dynatomic polynomial}
	\[ \Phi_{M,N}(x,t) := \frac{\Phi_N(f_t^M(x), t)}{\Phi_N(f_t^{M-1}(x), t)} \in \bbZ[x,t], \]
and we extend this definition to $M = 0$ by setting $\Phi_{0,N} := \Phi_N$. That $\Phi_{M,N}$ is a polynomial is proven in \cite[Thm. 1]{hutz:2015}. The generalized dynatomic polynomials give a natural factorization
	\[
		f_t^{M+N}(x) - f_t^M(x) = \prod_{m=0}^M\prod_{n \mid N} \Phi_{m,n}(x,t)
	\]
for all $M \ge 0$ and $N \ge 1$. If $\Phi_{M,N}(\alpha,c) = 0$, we say that $\alpha$ has \textbf{formal (preperiodic) portrait} $(M,N)$ for $f_c$. Just as in the periodic case, every point of exact portrait $(M,N)$ has formal portrait $(M,N)$, but the converse is not true in general. We now provide an alternative description of $\Phi_{M,N}$ that will be useful for later arguments.

\begin{lem}\label{lem:phiMNalt}
For any integers $M,N \ge 1$, we have $\Phi_{M,N}(x,t) = \Phi_N(-f_t^{M-1}(x),t)$.
\end{lem}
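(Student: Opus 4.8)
The plan is to unwind both sides using the defining product formula for $\Phi_N$ and the relation $f_t(z) = z^2 + t$. Write $g := f_t^{M-1}(x)$, so that $f_t^M(x) = g^2 + t$. The key observation is that $f_t^n(f_t^M(x)) = f_t^n(g^2+t)$, and I want to compare this with something involving $-g$. Since $f_t(-g) = (-g)^2 + t = g^2 + t = f_t(g)$, we have $f_t^n(-g) = f_t^{n-1}(f_t(-g)) = f_t^{n-1}(f_t(g)) = f_t^n(g)$ for every $n \ge 1$. In other words, $g$ and $-g$ have exactly the same forward orbit under $f_t$ from the first step onward; they differ only in that $f_t^0$ distinguishes them.

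With that identity in hand, I would compute directly from the definition $\Phi_{M,N}(x,t) = \Phi_N(f_t^M(x),t)/\Phi_N(f_t^{M-1}(x),t)$. Expanding the numerator via $\Phi_N(y,t) = \prod_{n\mid N}(f_t^n(y) - y)^{\mu(N/n)}$ with $y = f_t^M(x) = g^2+t = f_t(g)$ gives $\prod_{n\mid N}(f_t^{n+1}(g) - f_t(g))^{\mu(N/n)}$, using $f_t^n(f_t(g)) = f_t^{n+1}(g)$. Meanwhile $\Phi_N(-g,t) = \prod_{n\mid N}(f_t^n(-g) - (-g))^{\mu(N/n)} = \prod_{n\mid N}(f_t^n(g) + g)^{\mu(N/n)}$ by the orbit identity above, valid since each $n \ge 1$. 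So it remains to check that the quotient $\Phi_N(f_t(g),t)/\Phi_N(g,t)$ equals $\prod_{n\mid N}(f_t^n(g)+g)^{\mu(N/n)}$.

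To see this, I would use the telescoping/factorization identity $f_t^n(y) - y = \prod_{d \mid n} \Phi_d(y,t)$ from \eqref{eq:Ncycle}, or work directly. Write $A_n(g) := f_t^n(g) - g$. Then $f_t^{n+1}(g) - f_t(g) = f_t^n(f_t(g)) - f_t(g)$, and on the other hand $(f_t^n(g) - g)(f_t^n(g) + g) = f_t^n(g)^2 - g^2 = (f_t^{n+1}(g) - t) - (f_t^1(g) - t) = f_t^{n+1}(g) - f_t(g)$, where I used $f_t^n(g)^2 = f_t(f_t^n(g)) - t = f_t^{n+1}(g) - t$ and similarly $g^2 = f_t(g) - t$. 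Hence $A_n(f_t(g)) = f_t^{n+1}(g) - f_t(g) = A_n(g)\,(f_t^n(g)+g)$. Taking the product over $n \mid N$ weighted by $\mu(N/n)$ gives
\[
\Phi_N(f_t(g),t) = \prod_{n\mid N} A_n(f_t(g))^{\mu(N/n)} = \prod_{n\mid N} A_n(g)^{\mu(N/n)} \cdot \prod_{n\mid N}(f_t^n(g)+g)^{\mu(N/n)} = \Phi_N(g,t)\cdot \Phi_N(-g,t),
\]
which rearranges to the claim after substituting back $g = f_t^{M-1}(x)$.

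I do not expect a serious obstacle here; the argument is essentially the single algebraic identity $A_n(f_t(g)) = A_n(g)\cdot(f_t^n(g)+g)$ together with multiplicativity of the M\"obius-weighted product. The one point requiring a little care is that the manipulation treats $\Phi_N$ as a genuine polynomial identity in $\bbZ[x,t]$ (and that the factor $\Phi_N(g,t)$ may vanish at special $(x,t)$), but since everything is an identity of polynomials — all the intermediate objects $f_t^n(f_t(g)) - f_t(g)$ are actual polynomials, and $\Phi_{M,N}$ is known to be a polynomial by \cite[Thm. 1]{hutz:2015} — the division is legitimate and the equality holds in $\bbZ[x,t]$. I would also note the edge behavior: for $M = 1$ the formula reads $\Phi_{1,N}(x,t) = \Phi_N(-x,t)$, which is consistent with $\Phi_{1,N}(x,t) = \Phi_N(f_t(x),t)/\Phi_N(x,t)$ via the same identity with $g = x$.
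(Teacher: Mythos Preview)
Your argument is correct. The single identity
\[
f_t^{n}(f_t(g)) - f_t(g) \;=\; \bigl(f_t^{n}(g) - g\bigr)\bigl(f_t^{n}(g) + g\bigr),
\]
which follows from $f_t(z) = z^2 + t$, does all the work, and the M\"obius-weighted product then gives $\Phi_N(f_t(g),t) = \Phi_N(g,t)\,\Phi_N(-g,t)$ as an identity in $\bbZ[x,t]$. Since $\Phi_{M,N}$ is already known to be a polynomial and $\bbZ[x,t]$ is a domain, the cancellation is legitimate and yields the claimed equality.

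This is a different route from the paper's. The paper does not prove the lemma directly: it cites \cite[Lem.~5.7]{gao:2016} for a more general statement and only sketches the conceptual reason --- namely, that $\alpha$ has portrait $(M,N)$ precisely when $-f_c^{M-1}(\alpha)$ is a nonzero point of period $N$, since the two preimages of any point under $f_c$ differ by sign and exactly one of them is periodic. Your approach replaces that zero-set/portrait reasoning with a short, self-contained polynomial identity; it is more elementary in the sense that it never leaves $\bbZ[x,t]$ and does not appeal to the external reference. The paper's sketch, on the other hand, makes the dynamical meaning of the formula transparent and explains \emph{why} one should expect such an identity in the first place.
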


A slightly more general version of this lemma is proven in \cite[Lem. 5.7]{gao:2016}, so we only briefly sketch the idea here. A point $\alpha$ has portrait $(M,N)$ for $f_c$ if and only if $f_c^{M-1}(\alpha)$ is strictly preperiodic and $f_c^M(\alpha)$ has period $N$. Since each periodic point has a unique periodic preimage, and since the preimages of a given point under $f_c$ differ by a factor of $-1$, this happens if and only if $-f_c^{M-1}(\alpha)$ is a (nonzero) point of period $N$.

It follows from Lemma~\ref{lem:phiMNalt} that $\deg_x \Phi_{M,N} = 2^{M-1}D(N)$ and $\deg_t \Phi_{M,N} = 2^{M-2}D(N)$ for all positive integers $M$ and $N$. Let $Y_1((M,N))$ be the affine plane curve\footnote{We use the rather bulky notation $Y_1((M,N))$ instead of $Y_1(M,N)$ because we reserve $Y_1(M,N)$ for curves parametrizing quadratic polynomial maps together with marked points of period $M$ and $N$.} defined by $\Phi_{M,N}(x,t) = 0$. That these curves are irreducible over $\bbC$ follows from the work of Bousch \cite[p. 67]{bousch:1992}. We define $U_1((M,N))$ to be the Zariski open subset of $Y_1((M,N))$ given by
	\begin{align}\label{eq:phiMNconditions}
		\begin{split}
		\Phi_{M,n}(z,t) \ne 0 &\mbox{ for all $n < N$ with $n \mid N$};\\
		\Phi_{m,N}(z,t) \ne 0 &\mbox{ for all $m < M$ (if $M > 0$)}.
		\end{split}
	\end{align}
Arguing as in \textsection \ref{sec:dynatomic}, $U_1((M,N))$ is nonempty, so $Y_1((M,N))$ is the closure of $U_1((M,N))$ in $\bbA^2$, and we denote by $X_1((M,N))$ the normalization of the projective closure of $Y_1((M,N))$.

Finally, we note that a point $(\alpha,c)$ lies on $Y_1((M,N))$ (resp., $U_1((M,N))$) if and only if $\alpha$ has formal portrait (resp., exact portrait) $(M,N)$ for $f_c$. Indeed, the statement for the formal portrait holds by definition, and the statement for the exact portrait is a consequence of the following:

\begin{lem}[{\cite[Lem. 3.1]{doyle:2016}}]
Let $c \in \bbC$, and suppose that $\alpha \in \bbC$ has formal portrait $(M,N)$ and exact portrait $(m,n)$ for $f_c$. Then either $m = M$ or $n = N$.
\end{lem}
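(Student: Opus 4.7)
The plan is to argue by contradiction: suppose $\alpha$ has exact portrait $(m,n)$ and formal portrait $(M,N)$ with $m\ne M$ \emph{and} $n\ne N$. Since $\Phi_{M,N}(x,t)$ divides $f_t^{M+N}(x)-f_t^M(x)$, the vanishing of $\Phi_{M,N}(\alpha,c)$ gives $f_c^{M+N}(\alpha)=f_c^M(\alpha)$, and the definition of exact portrait then forces $M\ge m$ and $n\mid N$. Combined with our assumption, this yields $m<M$ (so in particular $M\ge 1$) and $n<N$ with $n\mid N$.

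By Lemma~\ref{lem:phiMNalt}, the equation $\Phi_{M,N}(\alpha,c)=0$ is equivalent to $\Phi_N(-f_c^{M-1}(\alpha),c)=0$, so $\beta := -f_c^{M-1}(\alpha)$ has formal period $N$ for $f_c$. A strictly preperiodic point cannot be a root of any dynatomic polynomial $\Phi_N$ (no iterate of such a point can equal itself), so $\beta$ must in fact be periodic. Set $\delta := f_c^{M-1}(\alpha)$; because $M-1\ge m$, the point $\delta$ is itself periodic with exact period $n$. Since $f_c$ is even, $f_c(\beta)=f_c(\delta)$, and bijectivity of $f_c$ on the length-$n$ cycle of $\delta$ shows that $\beta$ is also periodic with exact period $n$ and lies in the same cycle as $\delta$.

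Now both $\delta$ and $\beta=-\delta$ lie in the same length-$n$ cycle and satisfy $f_c(\delta)=f_c(\beta)$; injectivity of $f_c$ on the cycle then forces $\delta=-\delta$, i.e., $\delta=0$. Hence $0$ is periodic of exact period $n$ under $f_c$, and the multiplier of this cycle is $(f_c^n)'(0)=\prod_{i=0}^{n-1}2f_c^i(0)=0$, since the $i=0$ factor vanishes. The same telescoping computation (using $f_c^{kn}(0)=0$ for every $k\ge 0$) yields $(f_c^N)'(0)=0$, so the derivative of $f_c^N(x)-x$ at $0$ equals $-1$, showing that $0$ is a \emph{simple} root of $f_c^N(x)-x=\prod_{d\mid N}\Phi_d(x,c)$.

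Since $0$ has exact period $n$, it is already a root of $\Phi_n(x,c)$, and simplicity of the root forces $\Phi_d(0,c)\ne 0$ for every divisor $d$ of $N$ distinct from $n$. In particular $\Phi_N(0,c)\ne 0$ (as $n<N$), contradicting the fact that $\beta=0$ has formal period $N$. The main obstacle is the middle step, where one extracts $\delta=0$ from the coincidence that $\delta$ and $-\delta$ are both periodic of the same exact period $n$; once injectivity of $f_c$ on the cycle reduces the problem to $\delta=0$, the multiplier/simple-root computation closes the case cleanly.
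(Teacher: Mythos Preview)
The paper does not actually prove this lemma; it is simply quoted from \cite{doyle:2016}. Your argument is a correct, self-contained proof, so there is nothing to compare against.

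A couple of minor comments on presentation. First, the step ``$M\ge m$ and $n\mid N$'' deserves one more sentence of justification: from $f_c^{M+N}(\alpha)=f_c^M(\alpha)$ you know $f_c^M(\alpha)$ is periodic, which forces $M\ge m$ by the definition of preperiod; then $f_c^M(\alpha)$ has exact period $n$, and $f_c^N$ fixes it, so $n\mid N$. Second, in the last paragraph you should say explicitly why $\Phi_n(0,c)=0$: the point $0$ has exact period $n$, so it is a root of $f_c^n(x)-x=\prod_{d\mid n}\Phi_d(x,c)$, and any factor $\Phi_d$ vanishing at $0$ would force $f_c^d(0)=0$, hence $n\mid d$; combined with $d\mid n$ this gives $d=n$. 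With these two clarifications the argument is airtight, and the idea of reducing to $\delta=0$ via injectivity of $f_c$ on a cycle, then using the superattracting multiplier to force simplicity of the root of $f_c^N(x)-x$, is clean and elementary.
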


\subsection{Admissible graphs}\label{sec:admissible}
The curves $Y_1(N)$ and $Y_1((M,N))$ from the preceding section parametrize quadratic polynomial maps $f_c$ together with a single marked point of a given preperiodic portrait for $f_c$. The purpose of this section is to extend these notions and define curves that parametrize maps $f_c$ together with a collection of marked points of various portraits for $f_c$.

As mentioned in \textsection \ref{sec:intro}, given a number field $K$ and an element $c \in K$, the set of $K$-rational preperiodic points for $f_c$ forms a finite directed graph $G(f_c,K)$. Not every directed graph may be obtained in this way; for example, for any graph of the form $G(f_c,K)$, each vertex must have precisely one edge leading away from it. With this in mind, we define the class of graphs that we will consider throughout the remainder of this article.

\begin{defn}\label{defn:admissible}
A finite directed graph $G$ is \textbf{admissible} if it satisfies the following properties:

\renewcommand{\labelenumi}{(\alph{enumi})}
\begin{enumerate}
\item Every vertex of $G$ has out-degree 1 and in-degree either 0 or 2.
\item For each $N \ge 2$, $G$ contains at most $R(N)$ $N$-cycles.
\end{enumerate}
We say that $G$ is \textbf{strongly admissible} if $G$ additionally satisfies the following:
\begin{enumerate}
\setcounter{enumi}{2}
\item The graph $G$ has either zero or two fixed points.
\end{enumerate}
\renewcommand{\labelenumi}{(\Alph{enumi})}
\end{defn}

It may be that $G(f_c,K)$ fails to be admissible for some number field $K$ and parameter $c \in K$. We say a map is {\bf post-critically finite (PCF)} if all of its critical points are preperiodic; in our case, a map $f_c$ is PCF if and only if $0$ is preperiodic for $f_c$, since $0$ is the only (finite) critical point of $f_c$. If $f_c$ is PCF, then $c = f_c(0)$ is preperiodic, and the only preimage of $c$ under $f_c$ is $0$. This implies that $G(f_c,K)$ fails condition (a) from Definition~\ref{defn:admissible}. We now show that this is the only way $G(f_c,K)$ may fail to be admissible.

\begin{lem}\label{lem:inadmissible}
Let $K$ be a number field, and let $c \in K$. The graph $G(f_c,K)$ is admissible if and only if $f_c$ is not PCF, and is strongly admissible if and only if $f_c$ is not PCF and $c \ne 1/4$.
\end{lem}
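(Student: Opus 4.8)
The plan is to prove both biconditionals by analyzing precisely how $G(f_c,K)$ can fail each of the conditions (a), (b), (c) in Definition~\ref{defn:admissible}. First I would recall the three structural facts about quadratic polynomial dynamics that do the work: (i) every finite point $\alpha$ has exactly one image $f_c(\alpha)$, so out-degree is always $1$; (ii) a finite point $\beta$ has exactly the two preimages $\pm\sqrt{\beta - c}$, which coincide if and only if $\beta = c$, i.e. if and only if $0 \in f_c^{-1}(\beta)$; and (iii) over a number field $K$, if $\beta$ is preperiodic then both of its preimages in $\Kbar$ are preperiodic, but they need not be $K$-rational. From (i), out-degree $1$ always holds. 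For in-degree, if a vertex $\beta$ of $G(f_c,K)$ has positive in-degree, pick a $K$-rational preimage $\alpha$; then $-\alpha$ is also a $K$-rational preimage and is preperiodic, so $-\alpha$ is also a vertex. Hence the in-degree is $0$, or it is $2$ unless $\alpha = -\alpha$, i.e. unless $\alpha = 0$ and $\beta = c$. So condition (a) fails exactly when $0$ is a vertex of $G(f_c,K)$, which (since $0$ has the single preimage $0$ only when... actually $0$ has preimages $\pm\sqrt{-c}$) happens exactly when $0$ is preperiodic for $f_c$, i.e. exactly when $f_c$ is PCF. This gives one direction; conversely if $f_c$ is not PCF, then $0$ is not a vertex, every vertex has in-degree $0$ or $2$, so (a) holds.

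Next I would dispatch condition (b): for every $N \ge 2$, the number of $N$-cycles among $\Kbar$-rational points of $f_c$ is at most $R(N)$ (the generic count from \textsection\ref{sec:dynatomic}, valid for all $c$ since $\Phi_N(x,t)$ has $x$-degree $D(N)$ and each genuine $N$-cycle accounts for $N$ roots), and the $K$-rational $N$-cycles are a subset of these, so condition (b) holds for $G(f_c,K)$ unconditionally — in particular it holds whenever $f_c$ is not PCF. Combined with the previous paragraph, this proves the first biconditional: $G(f_c,K)$ is admissible $\iff$ conditions (a) and (b) hold $\iff$ (a) holds $\iff$ $f_c$ is not PCF.

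For the strong admissibility statement I would then examine condition (c), the number of fixed points. The finite fixed points of $f_c$ are the roots of $x^2 - x + c = \Phi_1(x,t)|_{t=c}$, which has discriminant $1 - 4c$. So over $\Kbar$ there are two fixed points when $c \ne 1/4$ and one (a double root, $x = 1/2$) when $c = 1/4$. Thus if $c = 1/4$, then $G(f_c,K)$ has exactly one fixed point (note $f_{1/4}$ is not PCF since $0 \mapsto 1/4 \mapsto 1/2 \mapsto 1/2$ shows $0$ is preperiodic — wait, that means $f_{1/4}$ \emph{is} PCF); I need to handle this carefully: $f_{1/4}$ sends $0 \to 1/4 \to 1/2 \to 1/2$, so $0$ is strictly preperiodic and $f_{1/4}$ is PCF. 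Hence by the first part $G(f_{1/4},K)$ is already not admissible, so it is certainly not strongly admissible, consistent with the claim. For $c \ne 1/4$: the two $\Kbar$-fixed points $\tfrac{1}{2}(1 \pm \sqrt{1-4c})$ are Galois conjugate over $K$, so $G(f_c,K)$ contains either both of them (if $1 - 4c$ is a square in $K$) or neither; in both cases condition (c) holds. Therefore, when $f_c$ is not PCF and $c \ne 1/4$, conditions (a), (b), (c) all hold and $G(f_c,K)$ is strongly admissible; conversely, if $G(f_c,K)$ is strongly admissible it is in particular admissible, so $f_c$ is not PCF, and then $c \ne 1/4$ follows since $f_{1/4}$ is PCF — so the hypothesis ``$c \ne 1/4$'' is actually subsumed, but stating it separately matches the lemma. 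This completes both biconditionals.

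The only genuinely delicate point — the ``main obstacle'' — is the careful bookkeeping around $c = 1/4$: one must notice that $f_{1/4}$ is PCF, so the ``$c \ne 1/4$'' clause in the strong-admissibility criterion is not an independent failure mode of condition (c) but is already forced by the PCF condition; alternatively, and perhaps more cleanly for the exposition, one observes that condition (c) can only fail when $1-4c$ is a square that is ``half-rational'' — impossible for a quadratic conjugate pair — so the fixed-point count is always $0$ or $2$ \emph{unless} the two fixed points collide at $c = 1/4$, and one must simply verify that collision case falls under PCF. I would also double-check the degenerate small cases (e.g. making sure ``at most $R(N)$ $N$-cycles'' is not violated for any specific $c$, which follows from the degree bound on $\Phi_N$) and note that admissibility of $G(f_c,K)$ for a \emph{number} field $K$ is what is asserted, so that preperiodicity is equivalent to finite orbit (Northcott) and all the arguments above go through over $\Kbar$ and descend to $K$.
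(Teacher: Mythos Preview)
Your treatment of conditions (a) and (b) matches the paper's proof essentially exactly, and your Galois-conjugacy argument for condition (c) when $c\ne 1/4$ actually supplies a detail the paper leaves implicit. The problem is your handling of $c=1/4$.

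You compute the orbit of $0$ under $f_{1/4}$ as $0\mapsto 1/4\mapsto 1/2\mapsto 1/2$ and conclude that $f_{1/4}$ is PCF. This is an arithmetic slip: $f_{1/4}(1/4)=(1/4)^2+1/4=5/16$, not $1/2$. The orbit of $0$ is $0,\;1/4,\;5/16,\;89/256,\ldots$, which is \emph{not} eventually periodic; indeed, as the paper remarks immediately after this lemma, if $f_c$ is PCF then $c$ is an algebraic integer, and $1/4$ is not. So $f_{1/4}$ is not PCF, $G(f_{1/4},K)$ \emph{is} admissible, and the condition ``$c\ne 1/4$'' is genuinely independent of the condition ``$f_c$ is not PCF'' --- it is not subsumed as you assert.

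This error breaks your converse direction for strong admissibility: you deduce $c\ne 1/4$ from ``strongly admissible $\Rightarrow$ not PCF $\Rightarrow c\ne 1/4$,'' and the second implication is false. The repair is already in your own writeup: you correctly observed that when $c=1/4$ the unique fixed point $1/2\in\bbQ\subseteq K$ gives $G(f_{1/4},K)$ exactly one fixed point, violating condition (c). That observation, not the PCF claim, is what yields ``strongly admissible $\Rightarrow c\ne 1/4$,'' and it is exactly how the paper argues. Once you replace the faulty step with this, your proof is complete and agrees with the paper's.
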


\begin{proof}
Let $\beta$ be a vertex of $G(f_c,K)$. It is immediate from the definition of $G(f_c,K)$ that there is precisely one edge leading away from $\beta$. Now suppose that $\beta$ has a $K$-rational preimage $\alpha$. Then $f_c^{-1}(\beta) = \{\pm \alpha\}$, hence $\beta$ has exactly two $K$-rational preimages if and only if $\alpha \ne 0$. Thus $G(f_c,K)$ fails condition (a) from Definition~\ref{defn:admissible} if and only if $0 \in \PrePer(f_c,K)$; that is, if and only if $f_c$ is PCF. Condition (b) is always satisfied by $G(f_c,K)$, since a quadratic polynomial has at most $R(N)$ cycles of length $N$ by definition, thus $G(f_c,K)$ is admissible if and only if $f_c$ is not PCF.

Since a quadratic polynomial has at most two fixed points, condition (c) fails when $G(f_c,K)$ has precisely one fixed point. This occurs if and only if the discriminant
	\[
		\disc_x \Phi_1(x,c) = \disc_x (x^2 - x + c) = 1 - 4c
	\]
vanishes; i.e., if and only if $c = 1/4$. We conclude that $G(f_c,K)$ fails to be strongly admissible if and only if $f_c$ is PCF or $c = 1/4$.
\end{proof}

\begin{rem}
If $K$ is a number field, then the graph $G(f_{1/4},K)$ is indeed admissible. This follows from the fact that if $f_c$ is PCF, then $c$ is an algebraic integer.
\end{rem}

\begin{cor}\label{cor:admissible}
Given a number field $K$, there are only finitely many $c \in K$ for which $G(f_c,K)$ is not strongly admissible.
\end{cor}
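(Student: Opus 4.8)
The plan is to reduce to a statement about heights via Lemma~\ref{lem:inadmissible} and then apply Northcott's theorem. By that lemma, $G(f_c,K)$ fails to be strongly admissible exactly when $c = 1/4$ or $f_c$ is PCF, i.e.\ exactly when $c = 1/4$ or $0$ is preperiodic for $f_c$. Since $\{1/4\}$ contributes only one parameter, it suffices to prove that there are finitely many $c \in K$ for which $0$ is preperiodic under $f_c$, and for this I would show that the set of all $c \in \QQbar$ with $0$ preperiodic for $f_c$ has bounded height; Northcott's theorem \cite{northcott:1950} then finishes the argument, since $[K:\bbQ]$ is finite.

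To bound the height I would bound $|c|_v$ by an absolute constant at every place $v$ of $\bbQ(c)$, using the familiar escape estimates that also show the Mandelbrot set (and its non-archimedean analogues) is bounded. Write $z_n := f_c^n(0)$, so $z_0 = 0$, $z_1 = c$, and $z_{n+1} = z_n^2 + c$; if $0$ is preperiodic then $\{z_n\}$ is finite, hence $\{|z_n|_v\}$ is bounded for every $v$. At an archimedean place, if $|c| > 2$ then an induction using $|z_{n+1}| \ge |z_n|^2 - |c| \ge |z_n|(|z_n| - 1)$ shows $|z_n| \ge |c|(|c|-1)^{n-1} \to \infty$, a contradiction; hence $|c|_v \le 2$. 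At a non-archimedean place $v$, if $|c|_v > 1$ then the ultrametric inequality forces $|z_{n+1}|_v = |z_n|_v^2$ for all $n \ge 1$, so $|z_n|_v = |c|_v^{2^{n-1}} \to \infty$, again impossible; hence $|c|_v \le 1$ (recovering the fact that $c$ must be an algebraic integer). Together these give $h(c) \le \log 2$.

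The remaining step is routine: $\{c \in K : h(c) \le \log 2\}$ is finite by Northcott's theorem, so only finitely many $c \in K$ make $f_c$ PCF, and adjoining $c = 1/4$ keeps the set finite. I do not expect any genuine obstacle here; the only substantive content is the two absolute-value estimates, and these are standard. Alternatively, one could bypass them by citing the boundedness of the height of post-critically finite parameters in the family $\{f_c\}$ --- a consequence of the theory of canonical heights in families, in the spirit of Call and Silverman --- and again invoke Northcott.
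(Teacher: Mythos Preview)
Your proposal is correct and follows essentially the same approach as the paper: reduce via Lemma~\ref{lem:inadmissible} to showing that only finitely many $c\in K$ have $0$ preperiodic for $f_c$, and then conclude by a height bound plus Northcott. The paper simply cites \cite[Prop.~4.22]{silverman:2007} for this last step, whereas you have written out the standard place-by-place escape estimate (which is exactly the content of that reference); in particular your non-archimedean bound $|c|_v\le 1$ recovers the integrality fact the paper uses elsewhere.
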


\begin{proof}
By Lemma~\ref{lem:inadmissible}, it suffices to show that the set\footnote{Elements of $\calS$ are usually called \emph{Gleason points} or \emph{Misiurewicz points}, according to whether $0$ is periodic or strictly preperiodic for $f_c$.} $\calS := \{c \in K : 0 \in \PrePer(f_c,K)\}$ is finite. The proof of this fact is elementary and may be found in \cite[Prop. 4.22]{silverman:2007}.
\end{proof}

By the proof of Lemma~\ref{lem:inadmissible}, if the graph $G(f_c,K)$ is inadmissible, then $G(f_c,K)$ satisfies the requirements of admissibility {\it except} that $f_c$ is ramified at exactly one vertex (namely, 0) of $G(f_c,K)$. This motivates the following terminology:
\begin{defn}\label{defn:ramified}
A directed graph $G$ is \textbf{nearly admissible} if $G$ satisfies the conditions of admissibility, except that precisely one vertex has in-degree one.
\end{defn}
In particular, we note that the graph $G(f_c,K)$ is admissible or nearly admissible for all number fields $K$ and $c \in K$.

\begin{rem}
For general rational maps, it is more natural to treat preperiodic graphs as {\it weighted} directed graphs, where each vertex is weighted according to the ramification of the map at that point. Such graphs are considered in \cite{silverman, doyle/silverman}. Under that convention, our {\it admissible} graphs would be those whose vertices all have weight one, and our {\it nearly admissible} graphs would be those with a single vertex of weight greater than one (and necessarily equal to two). However, for the purposes of the current article, it suffices to work with unweighted graphs and treat PCF maps separately.
\end{rem}

For an admissible graph $G$, let $\f$ be the map on the set of vertices of $G$ defined so that $\f(P) = Q$ if and only if there is an edge $P \to Q$. For each vertex $P$, let $-P$ denote the unique vertex different from $P$ with $\f(P) = \f(-P)$. (The notation is intended to mirror the action of $f_c$ on $G(f_c,K)$.) We define the \textbf{orbit} of a vertex $P$ to be the set of vertices
	\[
		\calO(P) := \{\f^k(P) \ : \ k \ge 0 \}. 
	\]
We define the eventual period, preperiod, and preperiodic portrait of a vertex $P$ just as we did for the preperiodic points of a rational map. If $P$ has in-degree zero, we call $P$ an \textbf{initial point}.

Ultimately, our goal is to construct for each admissible graph $G$ an algebraic curve $Y_1(G)$ whose $K$-rational points, roughly speaking, correspond to quadratic maps $f_c$ for which $G(f_c,K)$ contains a subgraph isomorphic to $G$. To construct these curves as efficiently as possible (say, with as few equations as possible), we require a minimal amount of information that is necessary to uniquely determine a given admissible graph $G$.

A key observation is that if $\alpha \in \PrePer(f_c,K)$, then every element of the form $\pm f_c^k(\alpha)$ with $k \ge 0$ is also in $\PrePer(f_c,K)$. If $G$ is an admissible graph and $P$ is a vertex of $G$, we define
	\[
		\calObar(P) :=
			\{\pm \f^k(P) \ : \ k \ge 0\}.
	\]
It is clear from the definition that $\calO(P) \subseteq \calObar(P)$. We also make the trivial but important observations that if $P \in \calO(P')$, then $\calO(P) \subseteq \calO(P')$, and if $P \in \calObar(P')$, then $\calObar(P) \subseteq \calObar(P')$.

The subgraph of $G$ with vertex set $\calObar(P)$ is the minimal admissible subgraph of $G$ containing $P$. It therefore makes sense to say that $P$ \emph{generates} this particular subgraph. More generally, we make the following definition:	

\begin{defn}
Let $G$ be an admissible graph, and let $\P := \{P_1,\ldots,P_n\}$ be a set of vertices of $G$. We say that $\P$ \textbf{generates} the graph $G$ if every vertex of $G$ is in $\calObar(P_i)$ for some $i \in \{1,\ldots,n\}$. Equivalently, $\P$ generates $G$ if there is no proper admissible subgraph of $G$ containing all of the vertices $P_1,\ldots,P_n$. We say that $\P$ is a \textbf{minimal generating set} if any other generating set has at least $n$ elements.
\end{defn}

\begin{rem}
A minimal generating set for an admissible graph is not unique. For example, the graph $G$ shown in Figure~\ref{fig:admissible} is minimally generated by a point $P$ of period 2 and a point $Q$ of portrait (3,3). However, for any $P' \in \{\pm P, \pm \f(P)\}$ and any $Q' \in \{\pm Q\}$, the set $\{P',Q'\}$ minimally generates $G$.
\end{rem}

\begin{figure}
\centering
%    \begin{tikzpicture}[scale=.65]
%\tikzset{vertex/.style = {}}
%\tikzset{every loop/.style={min distance=10mm,in=45,out=-45,->}}
%\tikzset{edge/.style={decoration={markings,mark=at position 1 with %
%    {\arrow[scale=1.5,>=stealth]{>}}},postaction={decorate}}}
%% vertices
%\node[vertex] (3a) at  (0,0) {$\bullet$};
%\node[vertex] (3b) at  (0,2) {$\bullet$};
%\node[vertex] (3c) at  (1.7,1) {$\bullet$};
%\node[vertex] (13a) at  (-1,-1.7) {$\bullet$};
%\node[vertex] (13b) at (-1,3.7) {$\bullet$};
%\node[vertex] (13c) at (3.7, 1) {$\bullet$};
%\node[vertex] (23a) at (5.4, 2) {$\bullet$};
%\node[vertex] (23b) at (5.4, 0) {$\bullet$};
%\node[vertex, label=$Q$] (33a) at (7.33, 2.5) {$\bullet$};
%\node[vertex] (33b) at (7.33, 1.5) {$\bullet$};
%%
%\node[vertex, label=$P$] (2a) at (3.33,-1.7) {$\bullet$};
%\node[vertex] (2b) at (5.33,-1.7) {$\bullet$};
%\node[vertex] (12a) at (1.33,-1.7) {$\bullet$};
%\node[vertex] (12b) at (7.33,-1.7) {$\bullet$};
%% edges
%\draw[edge] (3a) to (3b);
%\draw[edge] (3b) to (3c);
%\draw[edge] (3c) to (3a);
%\draw[edge] (13a) to (3a);
%\draw[edge] (13b) to (3b);
%\draw[edge] (13c) to (3c);
%\draw[edge] (23a) to (13c);
%\draw[edge] (23b) to (13c);
%\draw[edge] (33a) to (23a);
%\draw[edge] (33b) to (23a);
%%
%\draw[edge] (2a) to[bend left=30] (2b);
%\draw[edge] (2b) to[bend left=30] (2a);
%\draw[edge] (12a) to (2a);
%\draw[edge] (12b) to (2b);
%\end{tikzpicture}
	\includegraphics{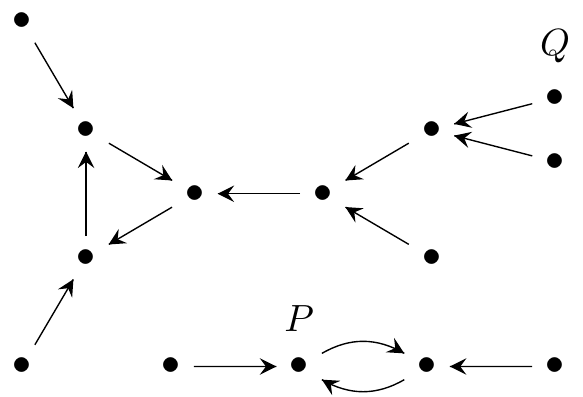}
	\caption{An admissible graph $G$ minimally generated by vertices $P$ and $Q$}
	\label{fig:admissible}
\end{figure}

\renewcommand{\labelenumi}{(\Alph{enumi})}

We now give two basic lemmas regarding minimal generating sets for admissible graphs.

\begin{lem}\label{lem:minimal_subset}
Let $G$ be an admissible graph, and let $\P := \{P_1,\ldots,P_n\}$ be a generating set for $G$. Then there exists a subset $\P' \subseteq \P$ that minimally generates $G$.
\end{lem}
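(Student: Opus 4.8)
The plan is to extract a minimal generating subset from $\P$ by greedily discarding redundant generators. First I would argue that any generating set of $G$ contains a generating subset that is \emph{irredundant}, in the sense that no vertex can be removed while still generating $G$. Since $\P$ is finite, I can repeatedly test each $P_i$: if $\P \setminus \{P_i\}$ still generates $G$, delete $P_i$; iterating this process on the (strictly shrinking) set of generators terminates, yielding a subset $\P' \subseteq \P$ that generates $G$ but such that no proper subset of $\P'$ generates $G$.

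The remaining point is to check that such an irredundant generating set $\P'$ is in fact \emph{minimal} in the sense of the definition, i.e.\ that every generating set of $G$ has at least $|\P'|$ elements. Here I would use the key structural fact recorded just before the lemma: if $P \in \calObar(P')$ then $\calObar(P) \subseteq \calObar(P')$, and dually, the subgraph on vertices $\calObar(P)$ is the minimal admissible subgraph containing $P$. In particular, each initial point $P$ of $G$ lies in $\calObar(Q)$ only when $Q \in \{\pm P\}$ (an initial point has in-degree zero, so it is not of the form $\pm\f^k(Q)$ for $k \ge 1$ unless $Q = \pm P$; and being initial, $P = \f^0(Q)$ forces $Q = \pm P$). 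Thus every generating set must contain, for each ``source branch'' of $G$, a vertex whose $\calObar$ covers that branch, and a counting argument over the initial points of $G$ shows that the number of generators needed is bounded below by a quantity depending only on $G$, not on the chosen generating set. Comparing this lower bound with the size of the irredundant set $\P'$ (whose irredundancy forces each element to be ``responsible'' for some initial point not covered by the others) gives $|\P'| \le |\calP''|$ for any generating set $\calP''$.

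The main obstacle I anticipate is making the lower-bound counting precise: an irredundant generating set need not consist solely of initial points (a vertex deep in the graph can still be irredundant if removing it loses vertices in its $\calObar$ not reached from the others), so I cannot simply biject generators with initial points. The cleanest fix is probably to associate to each admissible graph $G$ the set $I(G)$ of its initial points together with the set of ``cycle representatives'' (one per periodic cycle not hit by any initial point's orbit closure), observe that a set $\P$ generates $G$ if and only if $\bigcup_i \calObar(P_i)$ contains every such initial point and cycle, and note that a single $\calObar(P_i)$ contains at most a bounded number of initial points and at most one periodic cycle --- in fact $\calObar(P)$ contains exactly the initial points lying in the unique ``tree'' feeding the cycle $\calO(P)$ eventually enters, so $|\P|$ is at least the number of distinct such trees/cycles. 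Since this lower bound is intrinsic to $G$ and is achieved by the irredundant $\P'$ (each of its elements covering a distinct tree-or-cycle, by irredundancy), we conclude $\P'$ is minimal. I would keep the write-up short, emphasizing the greedy reduction and citing the $\calObar$-monotonicity observations, and relegate the counting bookkeeping to a one-paragraph argument.
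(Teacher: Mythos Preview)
Your overall strategy---reduce greedily to an irredundant subset $\P'\subseteq\P$, then show that irredundant implies minimal---is sound, and the conclusion is true. However, the argument you give for the second step contains a genuine error, and the ``fix'' you sketch does not repair it.

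The false claim is that an initial point $P$ lies in $\calObar(Q)$ only when $Q\in\{\pm P\}$. This fails already in small examples: take a fixed point $A$ with $-A\to A$, preimages $B,-B\to -A$, and preimages $C,-C\to B$. Then $-B$ is initial (it has in-degree $0$), yet $-B\in\calObar(C)$ since $\f(C)=B$ and $-B=-\f(C)$, while $C\notin\{\pm(-B)\}$. The subsequent tree/cycle counting you propose inherits this problem: a single $\calObar(P_i)$ can contain several initial points lying on different ``branches,'' so there is no clean bijection between irredundant generators and initial points (or tree/cycle pieces).

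The correct way to complete your step~(2) is short and is essentially what the paper does. Fix any minimal generating set $\Q=\{Q_1,\dots,Q_m\}$. For each $P_i\in\P'$ choose $s(i)$ with $P_i\in\calObar(Q_{s(i)})$, and for each $Q_j$ choose $r(j)$ with $Q_j\in\calObar(P_{r(j)})$; both exist since $\P'$ and $\Q$ generate $G$. Then
\[
P_i\in\calObar(Q_{s(i)})\subseteq\calObar(P_{r(s(i))}),
\]
and irredundancy of $\P'$ forces $r(s(i))=i$ (otherwise $P_i\in\calObar(P_j)$ with $j\ne i$ would make $P_i$ redundant). Hence $s$ is injective and $|\P'|\le m$, so $\P'$ is minimal.

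The paper's proof is the same chain of inclusions, but organized more directly: it never passes through an irredundant intermediate set. Instead it starts from a minimal $\Q$, uses the inclusions $\calObar(Q_i)\subseteq\calObar(P_{r(i)})\subseteq\calObar(Q_j)$ and minimality of $\Q$ to force $\calObar(Q_i)=\calObar(P_{r(i)})$, and concludes that $\{P_{r(i)}\}\subseteq\P$ is already a generating set of size at most $m$. Your route via irredundancy is a legitimate alternative, but the payoff is essentially the same argument; the initial-point bookkeeping you proposed should be discarded.
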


\begin{proof}
Let $\Q := \{Q_1,\ldots,Q_m\}$ be any minimal generating set for $G$, and fix an index $i \in \{1,\ldots,m\}$. Since $\P$ generates $G$, we have $Q_i \in \calObar(P_{r(i)})$ for some $r(i) \in \{1,\ldots,n\}$, hence $\calObar(Q_i) \subseteq \calObar(P_{r(i)})$. Similarly, since $\Q$ generates $G$, we have $\calObar(P_{r(i)}) \subseteq \calObar(Q_j)$ for some $j \in \{1,\ldots,m\}$. Combining these two inclusions yields
	\begin{equation}\label{eq:gens}
		\calObar(Q_i) \subseteq \calObar(P_{r(i)}) \subseteq \calObar(Q_j).
	\end{equation}
Since $\Q$ is a minimal generating set for $G$, the inclusion $\calObar(Q_i) \subseteq \calObar(Q_j)$ implies that $i = j$. It then follows from \eqref{eq:gens} that $\calObar(Q_i) = \calObar(P_{r(i)})$. Therefore the vertex set of $G$ is equal to
	\[
		\bigcup_{i=1}^m \calObar(Q_i) = \bigcup_{i=1}^m \calObar(P_{r(i)}),
	\]
hence $\P' := \{P_{r(i)} : i \in \{1,\ldots,m\}\} \subseteq \P$ is a generating set for $G$. Since $|\P'| \le |\Q|$, $\P'$ is a minimal generating set.
\end{proof}

\begin{lem}\label{lem:more_generators}
Let $G$ and $H$ be admissible graphs with $H \subseteq G$, and let $\P$ and $\Q$ be minimal generating sets for $G$ and $H$, respectively. Then $|\P| \ge |\Q|$.
\end{lem}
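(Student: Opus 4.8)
The plan is to reduce the inequality $|\P| \ge |\Q|$ to the statement of Lemma~\ref{lem:minimal_subset} by enlarging a minimal generating set of $H$ to a generating set of $G$ of the same cardinality. Let $\P = \{P_1,\ldots,P_n\}$ be a minimal generating set for $G$ and $\Q = \{Q_1,\ldots,Q_m\}$ a minimal generating set for $H$; we want $n \ge m$. First I would observe that since $H \subseteq G$, the set $\Q$ consists of vertices of $G$, so $\Q$ is a set of vertices of $G$ (though not necessarily a generating set for $G$). The idea is then to extend $\Q$ to a generating set $\Q'$ of $G$ by adjoining vertices taken from $\P$, without introducing redundancy among the $Q_i$ themselves.

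The key step is to control how the $\calObar$-orbits interact. For each $i \in \{1,\ldots,m\}$, the vertex $Q_i$ lies in $\calObar(P_{s(i)})$ for some index $s(i)$, because $\P$ generates $G$; hence $\calObar(Q_i) \subseteq \calObar(P_{s(i)})$. I claim that the $Q_i$ cannot all be "absorbed" into fewer than $m$ of the sets $\calObar(P_j)$ in a way that would let us shrink $\Q$. More precisely: suppose $\calObar(Q_i) \subseteq \calObar(P_j)$ and $\calObar(Q_{i'}) \subseteq \calObar(P_j)$ with $i \ne i'$; this alone does not yield a contradiction, so instead I would argue as follows. Form $\Q' := \Q \cup (\P \setminus \{P_{s(1)},\ldots,P_{s(m)}\})$. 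Every vertex of $G$ lies in some $\calObar(P_j)$; if $j \in \{s(1),\ldots,s(m)\}$, say $j = s(i)$, then we do not immediately know $\calObar(P_j) \subseteq \calObar(Q_i)$, so this naive construction may fail to generate $G$. The cleaner route is to mimic Lemma~\ref{lem:minimal_subset} directly: take the generating set $\P$ of $G$ and intersect orbit considerations with $H$.

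So here is the argument I would actually write. Because $H \subseteq G$ and $H$ is admissible, and $\P$ generates $G$, each vertex of $H$ lies in $\calObar(P_j)$ for some $j$; let $J \subseteq \{1,\ldots,n\}$ be the set of indices $j$ such that $\calObar(P_j) \cap H \ne \emptyset$ contains a vertex generating (within $H$) a "large" suborbit — more carefully, for each $j \in J$ pick a vertex $R_j \in \calObar(P_j) \cap H$; then $\{R_j : j \in J\}$ need not generate $H$. To fix this, I would instead use that $\Q$ generates $H$: for each $i$, $Q_i \in \calObar(P_{s(i)})$, and running the Lemma~\ref{lem:minimal_subset} argument with the roles played inside $H$, the inclusions $\calObar(Q_i) \cap H \subseteq \calObar(P_{s(i)}) \cap H \subseteq \calObar(Q_{j}) \cap H$ (the last because $\calObar(P_{s(i)})\cap H$, being a union of $\calObar$-orbits of $H$-vertices, is covered by the generating set $\Q$ of $H$) force $i = j$ by minimality of $\Q$ in $H$, hence $\calObar(P_{s(i)}) \cap H = \calObar(Q_i) \cap H$ and in particular the map $i \mapsto s(i)$ is injective. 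Injectivity of $s$ gives $m = |\{s(1),\ldots,s(m)\}| \le n$, which is the desired inequality.

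The main obstacle is the subtlety in the chain of inclusions: one must verify that $\calObar(P_{s(i)}) \cap H$ is itself a union of sets $\calObar(Q_k) \cap H$, which requires knowing that $\calObar(P_{s(i)}) \cap H$ is an admissible subgraph of $H$ (so that the generating set $\Q$ of $H$ covers it) — this uses that the intersection of two admissible subgraphs, or of an $\calObar$-orbit with an admissible graph, is again of the right form, closed under $\f$ and under $P \mapsto -P$ within $H$. Once that closure property is established, injectivity of $s$ follows exactly as in the proof of Lemma~\ref{lem:minimal_subset}, and the cardinality bound is immediate.
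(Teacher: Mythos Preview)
Your approach can be made to work, but it is considerably more involved than the paper's argument, and the ``main obstacle'' you flag is not quite the right one.

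The paper proceeds by pigeonhole: assuming $|\P| < |\Q|$, two distinct elements $Q, Q' \in \Q$ must lie in a common $\calObar(P)$ for some $P \in \P$. The one fact then needed is that any two vertices in $\calObar(P)$ have nested $\calObar$-orbits: if $Q, Q' \in \calObar(P)$, then $Q \in \calObar(Q')$ or $Q' \in \calObar(Q)$. (Write $Q = \pm\f^a(P)$, $Q' = \pm\f^b(P)$ with $a \le b$; since $\calObar(-R) = \calObar(R)$ for every vertex $R$, one gets $Q' \in \calObar(\f^a(P)) = \calObar(Q)$.) Either inclusion contradicts minimality of $\Q$, and the proof is finished in three lines.

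Your route of proving that $i \mapsto s(i)$ is injective is simply the contrapositive of pigeonhole, so the core content is the same; the detour through $\calObar(P_{s(i)}) \cap H$ is where things get tangled. You need $\calObar(P_{s(i)}) \cap H \subseteq \calObar(Q_j)$ for a \emph{single} $j$, but your stated justification (``covered by the generating set $\Q$'') only gives containment in $\bigcup_k \calObar(Q_k) = H$, which is vacuous. Your final paragraph identifies the obstacle as showing closure of this intersection under $\f$ and $R \mapsto -R$, but closure alone yields merely an admissible subgraph, not a single $\calObar$-orbit; knowing it is a \emph{union} of sets $\calObar(Q_k)$ is not enough to run the Lemma~\ref{lem:minimal_subset} argument, which relies on landing in one $\calObar(Q_j)$. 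What is actually required is that $\calObar(P) \cap H$, when nonempty, equals $\calObar(R)$ for a single vertex $R$ of $H$---and establishing this is exactly the ``nested orbits'' observation above in disguise. Once you have that fact in hand, intersecting with $H$ is unnecessary and the paper's direct argument is cleaner.
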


\begin{proof}
Suppose to the contrary that $|\P| < |\Q|$. Every vertex of $H$ --- hence every element of $\Q$ --- lies in $\calObar(P)$ for some $P \in \P$. Since $|\P| < |\Q|$, there must be distinct elements $Q,Q' \in \Q$ and an element $P \in \P$ such that $Q,Q' \in \calObar(P)$. However, this implies that either $Q \in \calObar(Q')$ or $Q' \in \calObar(Q)$, contradicting minimality of $\Q$. Hence $|\P| \ge |\Q|$.
\end{proof}

Let $G$ be an admissible graph, and let $\P := \{P_1,\ldots,P_n\}$ be a (not necessarily minimal) generating set for $G$, and let $(M_i,N_i)$ be the portrait of $P_i$ for each $i \in \{1,\ldots,n\}$. Let $G_0 := \emptyset$ be the empty graph (i.e., the graph with no vertices), and for each $i \in \{1,\ldots,n\}$, we define the following:
\begin{equation}\label{eq:subgraphs}
	\begin{split}
		G_i &: \text{ the admissible subgraph of $G$ generated by $\{P_1,\ldots,P_i\}$};\\
		H_i &: \text{ the set of vertices in $G_i \setminus G_{i-1}$}.
	\end{split}
\end{equation}	
Note that $H_i \subseteq \calObar(P_i)$ for each $i$, and the vertex set of $G$ is the disjoint union of the $H_i$. If $\P$ is a \emph{minimal} generating set for $G$, then $\pm P_i \in H_i$ for each $i \in \{1,\ldots,n\}$; in particular, each $H_i$ is nonempty. We henceforth assume $\P$ is minimal.

Let $\calD = \calD(\P) \subseteq \{1,\ldots,n\}$ be the set of indices $i$ for which $\calO(P_i)$ is \emph{disjoint} from $G_{i-1}$; in this case, $H_i = \calObar(P_i)$. Set $\calDbar = \calDbar(\P) := \{1,\ldots,n\} \setminus \calD$. Note that $H_i$ contains a cycle if and only if $i \in \calD$.

If $i \in \calDbar$, then $\calO(P_i)$ intersects $G_{i-1}$, and we define the following:
\begin{equation}\label{eq:indices}
	\begin{split}
		\kappa_i &: \text{ the smallest integer $k$ for which $\f^k(P_i) \in G_{i-1}$};\\
		j_i &: \text{ the unique index $j$ for which $\f^{\kappa_i}(P_i) \in H_j$}.\\
		\lambda_i &: \text{ the smallest integer $\ell$ for which $\f^\ell(P_{j_i}) = -\f^{\kappa_i}(P_i)$}.
	\end{split}
\end{equation}
Note that $\kappa_i \ge 1$ since $P_i \not \in G_{i-1}$ (by minimality of the generating set) and that $j_i < i$ (by definition). We also explain why $\lambda_i$ is well-defined: Since $\f^{\kappa_i}(P_i) \in H_{j_i} \subseteq \calObar(P_{j_i})$, one of $\pm \f^{\kappa_i}(P_i)$ lies in the orbit of $P_{j_i}$. We cannot have $\f^{\kappa_i}(P_i) \in \calO(P_{j_i})$, since this would then imply that one of $\pm \f^{\kappa_i - 1}(P_i)$ was in $\calO(P_{j_i})$, contradicting the fact that $\f^{\kappa_i - 1}(P_i) \not\in \calObar(P_{j_i})$ by minimality of $\kappa_i$. Therefore $-\f^{\kappa_i}(P_i) \in \calO(P_{j_i})$, so $\lambda_i$ is well-defined.

We are now ready to give a necessary and sufficient condition for two admissible graphs to be isomorphic. In the proof, we will use the fact that an isomorphism of directed graphs $\phi: G \to G'$, with $G$ and $G'$ admissible, necessarily commutes with $\f$. Here we are abusing notation and letting $\f$ denote the map on $G$ as well as on $G'$.

\begin{lem}\label{lem:graph_iso}
Let $G$ be an admissible graph minimally generated by $\P := \{P_1,\ldots,P_n\}$, and for each $i \in \{1,\ldots,n\}$ define $G_i$, $H_i$, $\kappa_i$, $j_i$, and $\lambda_i$ as in \eqref{eq:subgraphs} and \eqref{eq:indices}. Let $G'$ be an admissible graph generated by $\P' := \{P_1',\ldots,P_n'\}$, and define $G_i'$ and $H_i'$ as in \eqref{eq:subgraphs} for each $i \in \{1,\ldots,n\}$. Then there is an isomorphism of directed graphs $G \overset{\sim}{\longrightarrow} G'$ mapping $P_i \mapsto P_i'$ if and only if the following conditions are satisfied for all $i \in \{1,\ldots,n\}$:
	\begin{enumerate}
		\item If $i \in \calD(\P)$, then $\calO(P_i')$ is disjoint from $G_{i-1}'$, and $P_i$ and $P_i'$ have the same preperiodic portrait $(M_i,N_i)$.
		\item If $i \in \calDbar(\P)$, then $\calO(P_i')$ intersects $G_{i-1}'$, and
				\[
					\f^{\kappa_i}(P_i') = -\f^{\lambda_i}(P_{j_i}').
				\]
	\end{enumerate}
\end{lem}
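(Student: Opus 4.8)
The plan is to prove both directions by induction on $n$, using the decomposition of the vertex set of $G$ into the disjoint pieces $H_1, \ldots, H_n$ (and similarly for $G'$), exploiting at each stage that $H_i \subseteq \calObar(P_i)$ so that an isomorphism is forced once we know where $P_i$ goes.

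For the forward direction, suppose $\psi \colon G \to G'$ is a directed graph isomorphism with $\psi(P_i) = P_i'$. Since $\psi$ commutes with $\f$ (as noted before the lemma) and sends $-P$ to $-\psi(P)$ (the unique other preimage of $\f(P)$), it restricts to an isomorphism $G_{i-1} \to G_{i-1}'$ for each $i$, because $G_{i-1}$ is determined as the admissible subgraph generated by $P_1, \ldots, P_{i-1}$. If $i \in \calD(\P)$, then $\calO(P_i)$ is disjoint from $G_{i-1}$; applying $\psi$ shows $\calO(P_i') = \psi(\calO(P_i))$ is disjoint from $G_{i-1}'$, and since $\psi$ is a graph isomorphism it preserves the portrait of $P_i$, giving condition (1). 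If $i \in \calDbar(\P)$, then $\f^{\kappa_i}(P_i) \in G_{i-1}$, so $\f^{\kappa_i}(P_i') = \psi(\f^{\kappa_i}(P_i)) \in G_{i-1}'$, giving that $\calO(P_i')$ meets $G_{i-1}'$; and by definition $-\f^{\kappa_i}(P_i) = \f^{\lambda_i}(P_{j_i})$, so applying $\psi$ and using $\psi(-v) = -\psi(v)$ yields $-\f^{\kappa_i}(P_i') = \f^{\lambda_i}(P_{j_i}')$, i.e.\ condition (2).

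For the converse, I would build the isomorphism $\psi$ one stage at a time: having produced a graph isomorphism $\psi_{i-1}\colon G_{i-1} \to G_{i-1}'$ with $\psi_{i-1}(P_\ell) = P_\ell'$ for $\ell < i$, I extend it over $H_i$. In case $i \in \calD(\P)$, the hypothesis says $P_i$ and $P_i'$ have the same portrait $(M_i, N_i)$ and $\calO(P_i')$ is disjoint from $G_{i-1}'$; since $H_i = \calObar(P_i)$ and $H_i' = \calObar(P_i')$ are each the (abstract) graph generated by a single vertex of portrait $(M_i,N_i)$ whose orbit is disjoint from everything already placed, there is a unique isomorphism $H_i \to H_i'$ sending $P_i \mapsto P_i'$, and it glues to $\psi_{i-1}$ because the two pieces are disjoint. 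In case $i \in \calDbar(\P)$, the orbit of $P_i$ eventually enters $G_{i-1}$ at step $\kappa_i$; I define $\psi(\f^k(P_i)) := \f^k(P_i')$ for $0 \le k < \kappa_i$, extend to $\pm$ of these by $\psi(-v) := -\psi(v)$, and for $k \ge \kappa_i$ use that $\f^{\kappa_i}(P_i) \in G_{i-1}$ is already assigned — consistency here is exactly the content of the equation $\f^{\kappa_i}(P_i') = -\f^{\lambda_i}(P_{j_i}')$ together with $-\f^{\kappa_i}(P_i) = \f^{\lambda_i}(P_{j_i})$, which matches $\psi_{i-1}(\f^{\kappa_i}(P_i)) = \psi_{i-1}(-\f^{\lambda_i}(P_{j_i})) = -\f^{\lambda_i}(P_{j_i}') = \f^{\kappa_i}(P_i')$. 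One checks the resulting map on $H_i = \{\pm\f^k(P_i) : 0 \le k < \kappa_i\}$ is a well-defined bijection onto $H_i'$ respecting edges, using minimality of $\kappa_i$ to see the vertices $\f^k(P_i)$ for $k < \kappa_i$ are genuinely new and that the only edge from $H_i$ into $G_{i-1}$ is the one at step $\kappa_i - 1 \to \kappa_i$.

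The main obstacle is the bookkeeping in the converse, specifically verifying well-definedness of $\psi$ on $H_i$ when $i \in \calDbar(\P)$: one must rule out collisions among the $\pm\f^k(P_i)$ for $0 \le k < \kappa_i$ and between these and $G_{i-1}$, which relies on the minimality of $\kappa_i$ and the discussion preceding the lemma showing $\f^{\kappa_i - 1}(P_i) \notin \calObar(P_{j_i})$ and $-\f^{\kappa_i}(P_i) \in \calO(P_{j_i})$. A secondary subtlety is checking that $\psi$, assembled from the $\psi_i$, is an isomorphism of \emph{directed} graphs and not merely a bijection — i.e.\ that every edge of $G$ lies within some $H_i$ or crosses from $H_i$ to $G_{i-1}$ at precisely the $\kappa_i$-step, which is where admissibility (in-degree $0$ or $2$, out-degree $1$) is used to guarantee the structure of each $\calObar(P_i)$ is rigid. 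Once well-definedness is secured, that $\psi$ commutes with $\f$ and preserves the $\pm$ structure is immediate from the construction, and bijectivity follows since the $H_i$ partition $G$ and the $H_i'$ cover $G'$ (using $|\P'| = |\P| = n$ and that $\P'$ generates $G'$).
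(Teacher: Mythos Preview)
Your overall approach matches the paper's: induction on $n$, handling the cases $i \in \calD$ and $i \in \calDbar$ separately, and extending the isomorphism one generator at a time. The forward direction is fine (in fact more detailed than the paper, which simply declares it ``clear'').

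There is, however, a genuine gap in your converse argument for $i \in \calDbar$. You correctly identify the main obstacle as verifying that the map on $H_i$ is a well-defined bijection onto $H_i'$, but your justification --- ``using minimality of $\kappa_i$ to see the vertices $\f^k(P_i)$ for $k < \kappa_i$ are genuinely new'' --- only establishes facts about $G$, which are immediate from the definition of $\kappa_i$. What you actually need is the corresponding statement in $G'$: that $\f^k(P_i') \notin G_{i-1}'$ for $0 \le k < \kappa_i$, i.e., that $\kappa_i$ is also the first-entry time for $P_i'$ into $G_{i-1}'$. The hypotheses of condition~(B) do \emph{not} assert this directly; they only give you that $\calO(P_i')$ meets $G_{i-1}'$ and that $\f^{\kappa_i}(P_i') = -\f^{\lambda_i}(P_{j_i}')$, and a priori the orbit of $P_i'$ could enter $G_{i-1}'$ earlier.

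The paper closes this gap by a contradiction argument that uses the inductive isomorphism $\psi_{i-1}^{-1}\colon G_{i-1}' \to G_{i-1}$ to transport a hypothetical early collision back to $G$. Concretely: if $\f^{\kappa_i-1}(P_i') \in G_{i-1}'$, write $\f^{\kappa_i-1}(P_i') = \pm\f^k(P_j')$ for some $j < i$; then $\f^{\kappa_i}(P_i') = \f^{k+1}(P_j')$, and combining with condition~(B) gives $\f^{\lambda_i}(P_{j_i}') = -\f^{k+1}(P_j')$. Now apply $\psi_{i-1}^{-1}$ to obtain $\f^{\lambda_i}(P_{j_i}) = -\f^{k+1}(P_j)$ in $G$, whence $\f^{\kappa_i}(P_i) = \f^{k+1}(P_j)$ and so $\f^{\kappa_i-1}(P_i) \in \{\pm\f^k(P_j)\} \subseteq G_{i-1}$, contradicting minimality of $\kappa_i$. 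This use of the inductive isomorphism is the essential missing step in your sketch.
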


\begin{rem}
The hypotheses of Lemma~\ref{lem:graph_iso} do not, \emph{a priori}, require $\P'$ to be a \emph{minimal} generating set for $G'$. However, it is an immediate consequence of the lemma that if conditions (A) and (B) hold, then $\P'$ is indeed minimal.
\end{rem}

\begin{proof}[Proof of Lemma~\ref{lem:graph_iso}]

The forward implication is clear, so we suppose that conditions (A) and (B) are satisfied and show that there is a graph isomorphism $G \to G'$ mapping $P_i \mapsto P_i'$ for each $i \in \{1,\ldots,n\}$. We proceed by induction on $n$.

The $n = 1$ case is immediate, since there is a unique admissible graph up to isomorphism generated by a single vertex of a given preperiodic portrait. Now suppose that $n > 1$. By the induction hypothesis, there is a graph isomorphism $\phi : G_{n-1} \overset{\sim}{\longrightarrow} G_{n-1}'$ that maps $P_i \mapsto P_i'$ for each $i \in \{1,\ldots,n-1\}$.

First, suppose that $n \in \calD(\P)$, so that $\calO(P_n)$ is disjoint from $G_{n-1}$. Then the elements of $H_n$ are the vertices of an admissible subgraph (which we also call $H_n$) of $G$, so $G$ is the disjoint union $G_{n-1} \sqcup H_n$ of two admissible graphs. By condition (A), the same must be true of $G'$, so we write $G' = G_{n-1}' \sqcup H_n'$ as a disjoint union of admissible graphs. By the $n = 1$ case, there is an isomorphism $H_n \overset{\sim}{\longrightarrow} H_n'$ mapping $P_n \mapsto P_n'$, so we may extend $\phi$ to obtain the desired isomorphism $G \overset{\sim}{\longrightarrow} G'$.

Now suppose that $n \in \calDbar(\P)$, so that $\calO(P_n)$ intersects $G_{n-1}$. Then
	\[ H_n = \{\pm \f^k(P_n) \ | \ 0 \le k \le \kappa_n - 1\}, \]
and, as sets of vertices, $G$ is the disjoint union $G = G_{n-1} \sqcup H_n$. To show that $\phi$ extends to an isomorphism $G \overset{\sim}{\longrightarrow} G'$ that maps $P_n \mapsto P_n'$, it suffices to show that $\f^{\kappa_n}(P_n')$ is the \emph{first} vertex in the orbit of $P_n'$ to lie in $G_{n-1}'$. Indeed, this would imply that
	\[ H_n' = \{\pm \f^k(P_n') \ | \ 0 \le k \le \kappa_n - 1\}, \]
and $G' = G_{n-1}' \sqcup H_n'$. The isomorphism $\phi : G_{n-1} \to G_{n-1}'$ could therefore be easily extended to an isomorphism $G \to G'$ since we have $\f^{\kappa_n}(P_n') = -\f^{\lambda_n}(P_{j_n}')$ by condition (B). 

Suppose to the contrary that $\f^{\kappa_n - 1}(P_n') \in G_{n-1}'$. Then $\f^{\kappa_n-1}(P_n') = \pm \f^k(P_j')$ for some $j \in \{1,\ldots,n-1\}$ and $k \in \bbN$, which implies that $\f^{\kappa_n}(P_n') = \f^{k+1}(P_j')$. By condition (B), we have
	\[
		\f^{\lambda_n}(P_{j_n}') = -\f^{\kappa_n}(P_n') = -\f^{k+1}(P_j').
	\]
Since $\phi^{-1}$ takes $P_j' \mapsto P_j$ and $P_{j_n}' \mapsto P_{j_n}$, we see that we must have
	\[
		\f^{\lambda_n}(P_{j_n}) = -\f^{k+1}(P_j),
	\]
and therefore $\f^{\kappa_n}(P_n) = -\f^{\lambda_n}(P_{j_n}) = \f^{k+1}(P_j)$. This implies that
	\[
		\f^{\kappa_n - 1}(P_n) \in \{ \pm \f^k(P_j)\} \subseteq \calObar(P_j) \subseteq G_{n-1},
	\]
contradicting the fact that $\f^{\kappa_n}(P_n)$ is the first element of $\calO(P_n)$ to lie in $G_{n-1}$.
\end{proof}

Suppose now that $G$ is nearly admissible in the sense of Definition~\ref{defn:ramified}. We define a generating set for $G$ in exactly the same way that we did for admissible graphs, with the understanding that if $P$ is the unique vertex for which $\f(P)$ has a single preimage, then $-P = P$. If $G$ is admissible and $G'$ is nearly admissible, then we say that $G'$ is \textbf{nearly isomorphic to $G$} if there is a minimal generating set $\{P_1,\ldots,P_n\}$ for $G$ and a generating set $\{P_1',\ldots,P_n'\}$ for $G'$ such that conditions (A) and (B) from Lemma~\ref{lem:graph_iso} are satisfied.

\subsection{Generalized dynamical modular curves}\label{sub:gen_dmc}

We are now ready to define, for each admissible graph $G$, a curve $Y_1(G)$ whose points generically correspond to quadratic polynomials $f_c$ together with a collection of points $\{\alpha_1,\ldots,\alpha_n\}$ that generate a subgraph of $G(f_c,\bbC)$ isomorphic to $G$. Now fix an admissible graph $G$, let $\P := \{P_1,\ldots,P_n\}$ be a minimal generating set for $G$, and for each $i \in \{1,\ldots,n\}$ let $(M_i,N_i)$ be the portrait of $P_i$. For each $i \in \calDbar := \calDbar(\P)$, let $j_i$, $\kappa_i$, and $\lambda_i$ be defined as in \eqref{eq:indices}. To each generator $P_i$ we associate a polynomial $\Psi_i(\x,t) \in \bbZ[\x,t]$ as follows, where we write $\x$ to represent the tuple $(x_1,\ldots,x_n)$ of variables:

	\begin{equation}\label{eq:Psi}
		\Psi_i(\x,t) :=
			\begin{cases}
				\Phi_{M_i,N_i}(x_i,t) , &\mbox{ if } i \in \calD; \\
				f_t^{\kappa_i}(x_i) + f_t^{\lambda_i}(x_{j_i}) , &\mbox{ if } i \in \calDbar.
			\end{cases}
	\end{equation}
We define $Y_G$ to be the reduced subscheme of $\bbA^{n+1}$ associated to the vanishing locus of the polynomials $\Psi_1,\ldots,\Psi_n$. Since $Y_G$ is defined by $n$ polynomial equations, each component of $Y_G$ has dimension at least one. On the other hand, the morphism $Y_G \to \bbA^1$ defined by $(\x,t) \mapsto t$ is easily seen to have finite fibers, so each component of $Y_G$ has dimension one. Thus $Y_G$ is a (possibly reducible) curve over $\bbQ$.

\begin{rem}\label{rem:3cycles}
The polynomials $\Psi_i$ have coefficients in $\bbZ$, so $Y_G$ may be defined over any field. Moreover, since $\Psi_i$ is monic in $x_i$ and $t$ (as well as $x_{j_i}$ in the case that $i \in \calDbar$), the argument that $Y_G$ has dimension one is valid over any field. We may therefore consider $Y_G$ as a curve over $\bbZ$, in the sense that $Y_G$ is a $\bbZ$-scheme with one-dimensional fibers.
\end{rem}

Unfortunately, the curve $Y_G$ is generally too large to serve as the dynamical modular curve for the admissible graph $G$. We illustrate this with the following example:

\begin{ex}\label{ex:3cycles}
Let $G$ be the graph minimally generated by two points of period 3. Note that the corresponding 3-cycles must be disjoint, as shown in Figure~\ref{fig:3and3}, since otherwise the graph could be generated by a single point of period three. The curve $Y_G$ is then defined by the equations
	\[ \Phi_3(x_1,t) = \Phi_3(x_2,t) = 0, \]
so that $Y_G$ is the fiber product of $Y_1(3)$ with itself relative to the map to $\bbA^1$ obtained by projecting onto the $t$-coordinate. If $\alpha$ is a point of period 3 for $f_c$, then $\Phi_3(\alpha,c) = 0$, and therefore $(\alpha,\alpha,c)$ is a point on $Y_G$. However, the point $(\alpha,\alpha,c)$ only carries the information of a \emph{single} 3-cycle, not the two 3-cycles that constitute $G$. Geometrically, the issue is that there is an irreducible component of $Y_G$ defined by the additional equation $x_1 = x_2$, as well as components given by $x_1 = f_t(x_2)$ and $x_1 = f_t^2(x_2)$, whose points $(\alpha_1,\alpha_2,c)$ fail to determine two distinct 3-cycles under $f_c$. Though it is not apparent here, in this case $Y_G$ has one remaining irreducible component, and it is this component that will serve as $Y_1(G)$.
\end{ex}

\begin{figure}
\centering
%    \begin{tikzpicture}[scale=.65]
%\tikzset{vertex/.style = {}}
%\tikzset{every loop/.style={min distance=10mm,in=45,out=-45,->}}
%\tikzset{edge/.style={decoration={markings,mark=at position 1 with %
%    {\arrow[scale=1.5,>=stealth]{>}}},postaction={decorate}}}
%% vertices
%\node[vertex] (3a) at  (0,0) {$\bullet$};
%\node[vertex] (3b) at  (0,2) {$\bullet$};
%\node[vertex] (3c) at  (1.7,1) {$\bullet$};
%\node[vertex] (13a) at  (-1,-1.7) {$\bullet$};
%\node[vertex] (13b) at (-1,3.7) {$\bullet$};
%\node[vertex] (13c) at (3.7, 1) {$\bullet$};
%%
%\node[vertex] (3a') at  (5,0) {$\bullet$};
%\node[vertex] (3b') at  (5,2) {$\bullet$};
%\node[vertex] (3c') at  (6.7,1) {$\bullet$};
%\node[vertex] (13a') at  (4,-1.7) {$\bullet$};
%\node[vertex] (13b') at (4,3.7) {$\bullet$};
%\node[vertex] (13c') at (8.7, 1) {$\bullet$};
%% edges
%\draw[edge] (3a) to (3b);
%\draw[edge] (3b) to (3c);
%\draw[edge] (3c) to (3a);
%\draw[edge] (13a) to (3a);
%\draw[edge] (13b) to (3b);
%\draw[edge] (13c) to (3c);
%%
%\draw[edge] (3a') to (3b');
%\draw[edge] (3b') to (3c');
%\draw[edge] (3c') to (3a');
%\draw[edge] (13a') to (3a');
%\draw[edge] (13b') to (3b');
%\draw[edge] (13c') to (3c');
%\end{tikzpicture}
	\includegraphics{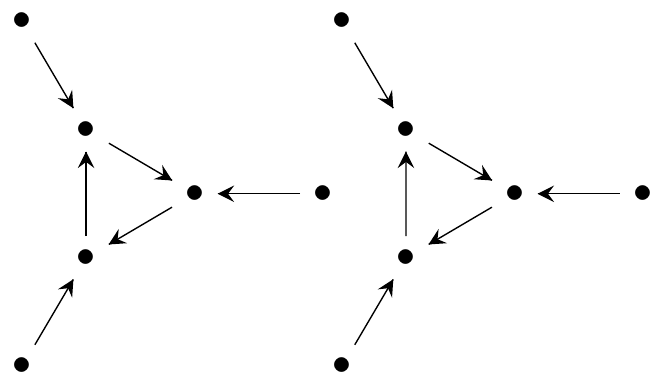}
\caption{The admissible graph minimally generated by two points of period 3}
\label{fig:3and3}
\end{figure}

From the above example, we see that if $P_i$ and $P_j$ are generators lying in disjoint components in $G$, then $\alpha_i$ and $\alpha_j$ should be forced to have disjoint orbits under $f_c$. We therefore impose additional conditions in order to define $Y_1(G)$.

\begin{defn}\label{defn:Y1(G)}
Let $G$ be an admissible graph, minimally generated by $\P = \{P_1,\ldots,P_n\}$, and let $\calD = \calD(\P)$ be as defined above. We define $Y_1(G)$ to be the closure (in $\bbA^{n+1}$) of the Zariski open subset of $Y_G$ determined by the conditions
	\begin{equation} \label{eq:disjoint_orbits}
		\begin{split}
			f_t^{M_i}(x_i) \ne f_t^{M_j + k}(x_j) &\mbox{ for all $0 \le k \le N_i - 1$,}\\
			&\mbox{ for all $i,j \in \calD$ with $i > j$ and $N_i = N_j$}.
		\end{split}
	\end{equation}
\end{defn}

In other words, $Y_1(G)$ is the union of the irreducible components of $Y_G$ on which, \emph{generically}, the conditions of \eqref{eq:disjoint_orbits} are satisfied. In fact, what remains is a single irreducible component  --- this is the content of Theorem~\ref{thm:main_irred}. We then define $U_1(G)$ to be the Zariski open subset of $Y_1(G)$ determined by \eqref{eq:disjoint_orbits} (that is, we remove the points of intersection of $Y_1(G)$ with the excised components of $Y_G$) as well as the following conditions from \eqref{eq:phiMNconditions}:	
	\begin{equation}\label{eq:right_portrait}
		\begin{split}
		\Phi_{M_i,n}(x_i,t) \ne 0 &\mbox{ for all $1 \le n < N_i$ with $n \mid N_i$, for all $i \in \calD$};\\
		\Phi_{m,N_i}(x_i,t) \ne 0 &\mbox{ for all $0 \le m < M_i$, for all $i \in \calD$}.
		\end{split}
	\end{equation}
As in previous sections, $Y_1(G)$ is the affine closure of $U_1(G)$ in $\bbA^{n+1}$. We define $X_1(G)$ to be the normalization of the projective closure of $Y_1(G)$. Finally, we denote by $\pi_G$ the morphism $X_1(G) \to \bbP^1$ that maps $(\x,t) \mapsto t$; this should be viewed as an analogue of the map from the classical modular curve $\Xell_1(N)$ to $\bbP^1$ given by the $j$-invariant.

\begin{rem}
It is apparent from the definition that $Y_1(G)$ depends on the order of the vertices in the minimal generating set $\P$. However, it is not difficult to see that a permutation of the vertices in $\P$ yields an isomorphic curve with a different embedding into $\bbA^{n+1}$. For example, this is an immediate consequence of Proposition~\ref{prop:right_graph} below.
\end{rem}

\begin{rem}
The empty graph $\emptyset$ (that is, the graph with no vertices) satisfies the conditions of admissibility, and is minimally generated by the empty set. In this case, we have 
\[ U_1(\emptyset) = Y_1(\emptyset) = \Spec \bbZ[t] \cong \bbA^1_\bbZ,\]
which is simply the parameter space of maps $f_c$ without additional preperiodic structure.
\end{rem}

We will refer to the curves $X_1(G)$, $Y_1(G)$, and $U_1(G)$ generally as \textbf{dynamical modular curves}. We now justify this terminology:

\begin{prop}\label{prop:right_graph}
Let $K$ be a field of characteristic zero. Let $G$ be an admissible graph with minimal generating set $\P = \{P_1,\ldots,P_n\}$. Then $(\alpha_1,\ldots,\alpha_n,c) \in U_1(G)(K)$ if and only if $\{\alpha_1,\ldots,\alpha_n\}$ generates a subgraph of $G(f_c,K)$ that is isomorphic or nearly isomorphic to $G$ via an identification $P_i \mapsto \alpha_i$.
\end{prop}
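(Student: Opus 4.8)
The plan is to reduce both implications to the combinatorial criterion of Lemma~\ref{lem:graph_iso}. Once we know the $\alpha_i$ are preperiodic for $f_c$, write $G'$ for the subgraph of $G(f_c,K)$ generated by $\{\alpha_1,\ldots,\alpha_n\}$; I will show that $(\alpha_1,\ldots,\alpha_n,c)\in U_1(G)(K)$ if and only if conditions (A) and (B) of Lemma~\ref{lem:graph_iso} hold for the minimal generating set $\P$ of $G$ and the generating set $\{\alpha_1,\ldots,\alpha_n\}$ of $G'$ with the pairing $P_i\leftrightarrow\alpha_i$. This finishes the proof: when $G'$ is admissible, Lemma~\ref{lem:graph_iso} turns (A)+(B) into an isomorphism $G\to G'$ with $P_i\mapsto\alpha_i$, and conversely such an isomorphism forces (A) and (B); when $G'$ is only nearly admissible, (A) together with (B) \emph{is}, by the definition of nearly isomorphic, the assertion that $G'$ is nearly isomorphic to $G$ via $P_i\mapsto\alpha_i$. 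Note also that lying on $U_1(G)$ is the same as lying on $Y_G$ and satisfying the open conditions \eqref{eq:disjoint_orbits} and \eqref{eq:right_portrait}, since a point of $Y_G$ satisfying \eqref{eq:disjoint_orbits} automatically lies on $Y_1(G)$.

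First I would record preliminaries, all valid because $\ch K=0$. For $i\in\calD$, the equation $\Psi_i=0$ on $Y_G$ is $\Phi_{M_i,N_i}(x_i,t)=0$, and the inequations \eqref{eq:right_portrait} in $x_i$ are precisely the conditions \eqref{eq:phiMNconditions} that cut $U_1((M_i,N_i))$ out of $Y_1((M_i,N_i))$; hence any point of $U_1(G)(K)$ has $i$-th coordinate $\alpha_i$ of \emph{exact} portrait $(M_i,N_i)$ for $f_c$, and in particular $\alpha_i\in\PrePer(f_c,K)$. For $i\in\calDbar$, the equation $\Psi_i=0$ reads $f_t^{\kappa_i}(x_i)=-f_t^{\lambda_i}(x_{j_i})$; since $j_i<i$ and $1\in\calD$ (as $G_0=\emptyset$), an induction on $i$ shows that every coordinate of any point of $Y_G(K)$ is preperiodic for $f_c$, because $\alpha_{j_i}\in\PrePer(f_c,K)$ implies $-f_t^{\lambda_i}(\alpha_{j_i})\in\PrePer(f_c,K)$, hence $f_t^{\kappa_i}(\alpha_i)\in\PrePer(f_c,K)$, hence $\alpha_i\in\PrePer(f_c,K)$. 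So $\{\alpha_1,\ldots,\alpha_n\}$ does generate a finite subgraph $G'$ of $G(f_c,K)$, and — because each $\calObar(\alpha_i)=\{\pm f_c^k(\alpha_i):k\ge 0\}$ is stable under $\beta\mapsto-\beta$ — the two $f_c$-preimages of a vertex of $G'$ lying away from the critical point $0$ (where they form a genuine pair $\pm\gamma$, $\gamma\ne0$) are either both in $G'$ or both outside it. An in-degree count then shows $G'$ is admissible, or — if $0$ is a vertex of $G'$, which forces $f_c$ to be PCF — nearly admissible, the unique in-degree-one vertex then being $c=f_c(0)$; the bound on the number of $N$-cycles is inherited from $G(f_c,K)$.

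The heart of the matter is the equivalence, for a point of $Y_G(K)$, of (A)+(B) with \eqref{eq:disjoint_orbits}+\eqref{eq:right_portrait}. Condition (B) for $i\in\calDbar$ asks exactly that $f_c^{\kappa_i}(\alpha_i)=-f_c^{\lambda_i}(\alpha_{j_i})$ — which is $\Psi_i=0$ — and that $\calO(\alpha_i)$ meet $G_{i-1}'$, which follows from the equation since $-f_c^{\lambda_i}(\alpha_{j_i})\in\calObar(\alpha_{j_i})\subseteq G_{i-1}'$. Condition (A) for $i\in\calD$ asks that $\alpha_i$ have portrait $(M_i,N_i)$ — equivalent to \eqref{eq:right_portrait} in $x_i$ — and that $\calO(\alpha_i)$ be \emph{disjoint} from $G_{i-1}'$. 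For the disjointness I would use that $\calObar(\alpha_m)$ contains exactly one cycle, namely the terminal cycle of $\alpha_m$, and that for $m\in\calDbar$ this cycle coincides (via $\Psi_m=0$) with the terminal cycle of $\alpha_{j_m}$; descending through the indices, the terminal cycle of \emph{every} generator is the terminal cycle of some $\alpha_{m_0}$ with $m_0\in\calD$. Hence if $\calO(\alpha_i)$ met $G_{i-1}'=\bigcup_{m<i}\calObar(\alpha_m)$, then — $G_{i-1}'$ being forward-closed — the terminal cycle $C_i$ of $\alpha_i$, of length $N_i$ by its exact portrait, would lie in some $\calObar(\alpha_m)$ with $m<i$, hence equal the terminal cycle of some $\alpha_{m_0}$ with $m_0\in\calD$, $m_0<i$; comparing lengths gives $N_{m_0}=N_i$, and $f_c^{M_i}(\alpha_i)\in C_i$ then equals $f_c^{M_{m_0}+k}(\alpha_{m_0})$ for some $0\le k\le N_i-1$, contradicting \eqref{eq:disjoint_orbits}. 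Conversely, if (A) holds for every $i\in\calD$, then for $i>j$ in $\calD$ with $N_i=N_j$ the orbit $\calO(\alpha_i)$ is disjoint from $\calObar(\alpha_j)\subseteq G_{i-1}'$, so the terminal cycles of $\alpha_i$ and $\alpha_j$ are disjoint, which is \eqref{eq:disjoint_orbits}. Putting everything together yields the biconditional.

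The step I expect to be the main obstacle is the disjointness half of (A): one must show that the deliberately economical conditions \eqref{eq:disjoint_orbits}, which constrain only pairs of $\calD$-generators of equal eventual period, suffice to separate the orbits of \emph{all} $n$ generators. This rests on tracking which cycle each generator's forward orbit terminates in — using that an admissible graph carries at most $R(N)$ cycles of length $N$, that $\calObar(P)$ contains a single cycle, and that a $\calDbar$-generator's orbit is absorbed into an earlier part of the graph — and the care needed to check that the resulting collision really does fall within the scope of \eqref{eq:disjoint_orbits} is the delicate point. A secondary item demanding attention is the verification that $G'$ is admissible or nearly admissible, which is where characteristic zero enters through the ``both-or-neither'' behaviour of the two $f_c$-preimages of a vertex away from the critical point.
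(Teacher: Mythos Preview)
Your proof is correct and follows essentially the same approach as the paper's: both directions are reduced to checking conditions (A) and (B) of Lemma~\ref{lem:graph_iso} against the defining equations $\Psi_i=0$ together with the open conditions \eqref{eq:disjoint_orbits} and \eqref{eq:right_portrait}. Your treatment of the disjointness half of (A) is in fact more detailed than the paper's, which simply asserts in one sentence that \eqref{eq:disjoint_orbits} guarantees $\calO(\alpha_i)$ is disjoint from the orbits of $\alpha_1,\ldots,\alpha_{i-1}$; you supply the argument the paper leaves implicit, tracing terminal cycles back to a $\calD$-indexed generator $\alpha_{m_0}$ with $N_{m_0}=N_i$ so that the collision falls within the scope of \eqref{eq:disjoint_orbits}.
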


\begin{proof}
Suppose first that $(\balpha,c) := (\alpha_1,\ldots,\alpha_n,c) \in U_1(G)(K)$. The relations defining $Y_G$ imply that each of $\alpha_1,\ldots,\alpha_n$ is preperiodic under $f_c$, so we let $G'$ be the subgraph of $G(f_c,K)$ generated by $\{\alpha_1,\ldots,\alpha_n\}$. Since $G'$ is either admissible or nearly admissible, it will suffice to show that the graphs $G$ and $G'$ satisfy conditions (A) and (B) of Lemma~\ref{lem:graph_iso}.

Suppose $i \in \calD$, and let $(M_i,N_i)$ be the preperiodic portrait of the vertex $P_i$. Then $\Psi_i(\balpha,c) = \Phi_{M_i,N_i}(\alpha_i,c) = 0$; combining this with \eqref{eq:right_portrait} tells us that $\alpha_i$ also has portrait $(M_i,N_i)$. Furthermore, condition \eqref{eq:disjoint_orbits} guarantees that the orbit of $\alpha_i$ under $f_c$ is disjoint from the orbits of $\alpha_1,\ldots,\alpha_{i-1}$ under $f_c$. Therefore condition (A) of Lemma~\ref{lem:graph_iso} is satisfied.

Now suppose that $i \in \calDbar$. Then
	\begin{equation}\label{eq:Dbar}
		0 = \Psi_i(\balpha,c) = 
				f_c^{\kappa_i}(\alpha_i) + f_c^{\lambda_i}(\alpha_{j_i}).			
	\end{equation}
Since $f_c^{\kappa_i}(\alpha_i) = -f_c^{\lambda_i}(\alpha_{j_i})$, condition (B) of Lemma~\ref{lem:graph_iso} is satisfied. We conclude that $G'$ is isomorphic or nearly isomorphic to $G$ via an identification $P_i \mapsto \alpha_i$, as claimed.

To prove the converse, suppose that $\{\alpha_1,\ldots,\alpha_n\} \subseteq \PrePer(f_c,K)$ generates a subgraph $G' \subseteq G(f_c,K)$ which is isomorphic or nearly isomorphic to $G$ via an identification $P_i \mapsto \alpha_i$. In other words, we assume that $G$ and $G'$ satisfy conditions (A) and (B) from Lemma~\ref{lem:graph_iso}.

If $i \in \calD$, then condition (A) implies that the orbit of $\alpha_i$ is disjoint from $G_{i-1}'$ (hence \eqref{eq:disjoint_orbits} is satisfied), and both $P_i$ and $\alpha_i$ have the same preperiodic type $(M_i,N_i)$. Therefore $\Phi_{M_i,N_i}(\alpha_i,c) = 0$, $\Phi_{M_i,n}(\alpha_i,c) \ne 0$ for all $0 \le n < N_i$ with $n \mid N_i$, and $\Phi_{m,N_i}(\alpha_i,c) \ne 0$ for all $0 \le m < M_i$.

On the other hand, if $i \in \calDbar$, then condition (B) asserts that the orbit of $\alpha_i$ intersects $G_{i-1}'$; moreover, we have $
		\f^{\kappa_i}(P_i) = 
				-\f^{\lambda_i}(P_{j_i}),
	$
so the identification $P_i \mapsto \alpha_i$ implies that
	\[
		f_c^{\kappa_i}(\alpha_i) + f_c^{\lambda_i}(\alpha_{j_i}) = 0.
	\]
Combined with the previous paragraph, this shows that $(\balpha,c) \in U_1(G)(K)$.
\end{proof}

\begin{cor}\label{cor:finite_ramified}
Let $K$ be a field of characteristic zero, and let $G$ be an admissible graph. For all but finitely many points $(\alpha_1,\ldots,\alpha_n,c) \in X_1(G)(K)$, $\{\alpha_1,\ldots,\alpha_n\}$ generates a subgraph of $G(f_c,K)$ isomorphic to $G$.
\end{cor}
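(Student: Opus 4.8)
The plan is to deduce Corollary~\ref{cor:finite_ramified} from Proposition~\ref{prop:right_graph}, the only real work being to confine to a finite set those points at which ``nearly isomorphic'' fails to be ``isomorphic.'' First I would reduce to points of $U_1(G)(K)$. Since $U_1(G)$ is a dense Zariski-open subset of the curve $Y_1(G)$, the complement $Y_1(G)\setminus U_1(G)$ is finite; the projective closure $\bar Y$ of $Y_1(G)$ adds only finitely many points; and the normalization morphism $X_1(G)\to\bar Y$ is finite and birational, hence an isomorphism over the cofinite smooth locus of $\bar Y$. Thus all but finitely many points of $X_1(G)(K)$ are carried isomorphically onto points of $U_1(G)(K)$, and for such a point $(\alpha_1,\dots,\alpha_n,c)$ Proposition~\ref{prop:right_graph} says that the subgraph $G'\subseteq G(f_c,K)$ generated by $\{\alpha_1,\dots,\alpha_n\}$ is isomorphic \emph{or nearly isomorphic} to $G$ via $P_i\mapsto\alpha_i$. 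It therefore suffices to show that only finitely many points of $U_1(G)(K)$ lie in the ``nearly isomorphic but not isomorphic'' case.

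Next I would identify these points exactly. The vertex set of $G'$, namely $\bigcup_i\calObar(\alpha_i)$, is stable under $f_c$ and under the involution $P\mapsto-P$. I would argue that if $0\notin G'$ then each non-initial vertex $\beta=f_c(\gamma)$ of $G'$ has its two $f_c$-preimages $\pm\gamma$ distinct (since $\gamma\ne0$) and both in $G'$, so $G'$ has all in-degrees in $\{0,2\}$ and is admissible, whereas if $0\in G'$ then $c=f_c(0)$ is the unique vertex of $G'$ of in-degree $1$, so $G'$ is nearly admissible. Since the notion ``nearly isomorphic to $G$'' is only defined for nearly admissible graphs, $G'$ is isomorphic to $G$ precisely when $0\notin G'$, i.e.\ precisely when $f_c^k(\alpha_i)\ne0$ for all $i$ and all $k\ge0$; as on $U_1(G)$ the orbit of each $\alpha_i$ has at most as many points as $G$ has vertices, this amounts to $f_t^k(x_i)\ne0$ for all $i\in\{1,\dots,n\}$ and all $0\le k\le K_0$, where $K_0$ is the number of vertices of $G$. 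So the bad locus is contained in $\bigcup_{i,k}\bigl(Y_1(G)\cap\{f_t^k(x_i)=0\}\bigr)$, a finite union.

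Finally I would show each set $Y_1(G)\cap\{f_t^k(x_i)=0\}$ is finite, by showing that $f_t^k(x_i)$ does not vanish at the generic point $\eta$ of any irreducible component of $Y_1(G)$. At such an $\eta$ the $t$-coordinate is transcendental over $K$, because $\pi_G$ has finite fibers so every component of $Y_1(G)$ dominates $\bbA^1_t$; and an easy induction on the defining relations $\Psi_j=0$ --- each of which either imposes $\Phi_{M_j,N_j}(x_j,t)=0$ or relates $x_j$ to an earlier coordinate via $f_t^{\kappa_j}(x_j)+f_t^{\lambda_j}(x_{j'})=0$ with $j'<j$ --- shows that each $x_i$ specializes at $\eta$ to a preperiodic point of $f_t$. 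Since $f_t$ with $t$ transcendental over $K$ is not post-critically finite (as $f_t^{m+n}(0)-f_t^m(0)$ is a nonzero polynomial in $t$), the point $0$ cannot lie in the orbit of the preperiodic point $x_i$, so $f_t^k(x_i)\ne0$ at $\eta$. Hence $Y_1(G)\cap\{f_t^k(x_i)=0\}$ is a proper closed, therefore finite, subset of $Y_1(G)$ for each pair $(i,k)$, the bad locus is a finite union of finite sets, and combined with the first reduction this proves the corollary. I expect the only substantive point to be this last non-vanishing, but it reduces to the elementary observation that the generic member of the family $(f_t)$ is not PCF; the remaining steps are routine bookkeeping, so I anticipate no serious obstacle.
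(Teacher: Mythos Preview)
Your proof is correct and shares its core observation with the paper: the ``nearly isomorphic but not isomorphic'' locus on $U_1(G)$ is exactly where $0$ lies in the generated subgraph, i.e.\ where $f_c$ is PCF with $0$ having portrait bounded in terms of $G$. The difference is in how you establish finiteness of this locus. The paper projects to the $t$-line: if $0$ has portrait $(m,n)$ for $f_c$ with $m\le M$ and $n\le N$ (where $M,N$ are the maximal preperiod and period in $G$), then $\Phi_{m,n}(0,c)=0$, which is a nonzero polynomial condition on $c$; since only finitely many $c$ satisfy one of these finitely many equations, and since $\pi_G$ has finite fibers, the bad locus is finite. You instead stay on $Y_1(G)$, writing the bad locus as $\bigcup_{i,k}\{f_t^k(x_i)=0\}$ and checking non-vanishing at each generic point via transcendence of $t$ and non-PCF of the generic map. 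Both arguments are short; the paper's is slightly more economical because it avoids the component-by-component generic point check, while yours has the minor virtue of not needing to invoke finiteness of the fibers of $\pi_G$ separately.
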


\begin{proof}
It suffices to show that the statement is true with $X_1(G)$ replaced by $U_1(G)$, since the latter is Zariski dense in the former. Let $N$ be the largest cycle length appearing in $G$, and let $M$ be the largest preperiod of a vertex in $G$. Then every vertex of $G$ has portrait $(m,n)$ with $m \le M$ and $n \le N$, and the same is true for any graph nearly isomorphic to $G$.

Now suppose $(\alpha_1,\ldots,\alpha_n,c) \in U_1(G)(K)$ is such that $\{\alpha_1,\ldots,\alpha_n\}$ does not generate a subgraph of $G(f_c,K)$ isomorphic to $G$. By Proposition~\ref{prop:right_graph}, $\{\alpha_1,\ldots,\alpha_n\}$ must therefore generate a subgraph of $G(f_c,K)$ which is nearly isomorphic to $G$. This implies that $0$ has portrait $(m,n)$ for $f_c$ for some $m \le M$ and $n \le N$, hence $\Phi_{m,n}(0,c) = 0$ for some $m \le M$ and $n \le N$. The set of such $c \in K$ is clearly finite, and since the projection map $\pi_G: X_1(G) \to \bbP^1$ is finite, the result follows.
\end{proof}

\subsection{The curves $X_1(N^{(r)})$}\label{sec:equiv}
Let $N \in \bbN$, and let $1 \le r \le R := R(N)$. Let $G$ be the admissible graph minimally generated by $r$ points of period $N$. Note that the minimality condition ensures that the $r$ points lie in distinct components of $G$. We define
	\[
		X_1(N^{(r)}) = X_1(\underbrace{N,\ldots,N}_{\text{$r$ times}}) := X_1(G),
	\]
and we similarly define $Y_1(N^{(r)})$ and $U_1(N^{(r)})$.

Using the definitions of the previous section, we see that $U_1(N^{(r)}) \subset \bbA^{r+1}$ is defined by the conditions
	\begin{align*}
		\Phi_N(x_i,t) = 0 &\text{ for all } 1 \le i \le r;\\
		x_i \ne f_t^k(x_j) &\text{ for all } 1 \le j < i \le r,\\
			&\text{ for all } 0 \le k \le N - 1;
	\end{align*}
and $Y_1(N^{(r)})$ is the closure of $U_1(N^{(r)})$ in $\bbA^{r+1}$. For the purposes of the next section, we now provide an equivalent definition of $Y_1(N^{(r)})$. First, we recall the following theorem due to Bousch:

\begin{thm}[{Bousch \cite[\textsection 3, Thm. 1]{bousch:1992}}]\label{thm:bousch}
For each $N \in \bbN$, the polynomial $\Phi_N(x,t)$ is irreducible over $\bbC$. If we let $L_N$ be the splitting field of $\Phi_N(x,t)$ over $\bbC(t)$, then the Galois group $\Gal(L_N/\bbC(t))$ consists of all permutations of the roots of $\Phi_N(x,t) \in \bbC(t)[x]$ that commute with $x \mapsto f_t(x)$.
\end{thm}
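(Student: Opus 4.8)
The plan is to derive both assertions from a description of $\Gal(L_N/\bbC(t))$ as a complex monodromy group. Write $\Gamma_N$ for the group of permutations of the $D(N)$ roots of $\Phi_N(x,t)$ that commute with $x \mapsto f_t(x)$. The inclusion $\Gal(L_N/\bbC(t)) \subseteq \Gamma_N$ is automatic, since any $\bbC(t)$-automorphism of $L_N$ fixes the coefficients of both $\Phi_N$ and $f_t$ and hence permutes the roots compatibly with $f_t$. Over the generic point of $\bbA^1_t$, the fact recalled above — that a general $f_c$ has exactly $D(N)$ points of exact period $N$, forming $R := R(N)$ cycles of length $N$ — shows that $f_t$ acts on the roots of $\Phi_N$ as a product of $R$ disjoint $N$-cycles, so that $\Gamma_N \cong (\bbZ/N\bbZ) \wr S_R \cong (\bbZ/N\bbZ)^R \rtimes S_R$, the factor $S_R$ permuting the $R$ periodic cycles and each factor $\bbZ/N\bbZ$ rotating the corresponding cycle. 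Thus the theorem reduces to the reverse inclusion $\Gamma_N \subseteq \Gal(L_N/\bbC(t))$, which in particular yields the irreducibility of $\Phi_N$, as $\Gamma_N$ acts transitively on the roots.

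Let $G := \Gal(L_N/\bbC(t))$, identified via the Riemann existence theorem with the monodromy group of (the Galois closure of) the branched cover $\pi_N \colon X_1(N) \to \bbP^1$, i.e.\ with the image of $\pi_1(\bbP^1 \smallsetminus S)$ for $S$ the branch locus. I would reduce the desired inclusion to two claims: (i) the homomorphism $G \to S_R$ recording how the $R$ periodic cycles are permuted is surjective; and (ii) $G$ contains an element of $(\bbZ/N\bbZ)^R$ generating a single $\bbZ/N\bbZ$-factor — a permutation rotating one periodic cycle by one step while fixing every other periodic point. Given (i) and (ii), conjugating the element from (ii) by preimages in $G$ of the transpositions of $S_R$ produces the analogous generator of each of the $R$ factors, so $G \supseteq (\bbZ/N\bbZ)^R$; combined with (i) this forces $|G| \geq N^R \cdot R! = |\Gamma_N|$ and hence $G = \Gamma_N$.

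For (ii) I would compute the local monodromy of $\pi_N$ at the puncture $t = \infty$. After the substitution $x = \sqrt{t}\, u$ and passage to a suitable fractional uniformizer at $\infty$, the periodic points acquire Puiseux expansions controlled by the normalized dynamics near the puncture, and I expect the loop around $t = \infty$ to act on the periodic points as a product of the cyclic rotations of the $R$ cycles; combining this with elements supplied by (i) to conjugate into, and project onto, a single cycle then isolates the required single-factor generator. For (i) I would study the degree-$R$ cover $Y_0(N) \to \bbA^1$ parametrizing a quadratic polynomial together with one unordered period-$N$ cycle: its connectedness gives transitivity of $G \to S_R$, and a parameter $t$ over which exactly two of the $R$ cycles collide transversally (and no others) gives a transposition, after which, by Jordan's theorem, it suffices to know that the action of $G$ on the $R$ cycles is primitive (e.g.\ $2$-transitive). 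The ingredients required here — connectedness of $Y_0(N)$, the transversality and exact count of colliding cycles (equivalently, the shape of the factorization of $\disc_x \Phi_N(x,t)$ over $\bbC[t]$), and primitivity of the cycle-action — I would supply from the theory of the Mandelbrot set: Gleason's theorem on the simplicity of the centers of period-$N$ hyperbolic components, together with the description, via Böttcher coordinates and external rays, of how periodic points collide at parabolic parameters.

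The hard part is precisely this complex-dynamical input: the monodromy computation at $t = \infty$ in (ii) and the fine branch-point analysis of $Y_0(N)$ in (i) — transversality and count of collisions, and primitivity — are not formal, and constitute the real substance of Bousch's argument. By contrast, the wreath-product identification of $\Gamma_N$, the reduction to (i) and (ii), and the group-theoretic passage from (i)+(ii) to $G = \Gamma_N$ are all elementary.
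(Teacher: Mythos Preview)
The paper does not prove this theorem: it is quoted from Bousch's thesis and used throughout as a black box. (Alternative proofs, both complex-analytic and algebraic, appear in work of Lau--Schleicher, Buff--Lei, and Morton, all cited in the introduction.) There is thus nothing in the paper itself to compare your proposal against.

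Your monodromy outline is in the right spirit, and the reduction to claims (i) and (ii) is correct. However, step (ii) as written contains a genuine group-theoretic gap. If, as you expect, the loop around $t=\infty$ acts as the simultaneous rotation $(1,1,\ldots,1)\in(\bbZ/N\bbZ)^R$ of all $R$ cycles, then this element is \emph{invariant} under $S_R$-conjugation; the subgroup of $\Gamma_N$ it generates together with all of $S_R$ is only
\[
\bigl\{(a,\ldots,a):a\in\bbZ/N\bbZ\bigr\}\rtimes S_R,
\]
which has index $N^{R-1}$ in $\Gamma_N$ whenever $R\ge 2$. No combination of conjugation and group operations inside $\Gamma_N$ will extract a single-factor generator from the diagonal element, so your proposed mechanism for (ii) cannot work as stated.

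What actually supplies the missing single-factor elements is monodromy at the \emph{finite} branch points --- specifically, at the satellite roots on $\partial\calM$ where a period-$N$ hyperbolic component is attached to the main cardioid. At such a parameter exactly one of the $R$ period-$N$ cycles degenerates (collapsing onto the parabolic fixed point) while the remaining $R-1$ cycles persist unchanged; the local monodromy there is supported in a single $\bbZ/N\bbZ$-factor and in fact generates it. Establishing this rigorously is where the real complex-dynamical input (Douady--Hubbard theory, parabolic bifurcations, landing of external rays) enters, and it --- rather than the behavior at $t=\infty$ --- carries most of the weight in the known proofs.
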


Morton \cite{morton:1996} remarks that $\Gal(L_N/\bbC(c))$ is isomorphic to the \emph{wreath product} of the cyclic group $\bbZ/N\bbZ$ with the symmetric group $S_R$, which is a certain semi-direct product $(\bbZ/N\bbZ)^R \rtimes S_R$. See \cite[\textsection 3.9]{silverman:2007} for a discussion of the wreath product in this setting. The Galois group is allowed to permute the $R$ different $N$-cycles \emph{as sets} by any permutation in $S_R$, and is also allowed to act by a cyclic permutation within each $N$-cycle independently. In particular, we have $[L_N : \bbC(t)] = R!N^R$.

Let $x_1,\ldots,x_R$ be roots of $\Phi_N(x,t)$ in an algebraic closure of $\bbC(t)$ with pairwise disjoint orbits under $f_t$; the full set of roots of $\Phi_N(x,t)$ is then $\{f_t^k(x_i): 1 \le i \le R, \ 0 \le k \le N - 1\}$. For each $r \in \{1,\ldots,R\}$, we define a polynomial $g_{N,r}$ by
	\[
		g_{N,r}(x) := \prod_{i=r}^R \prod_{k=0}^{N-1} \left(x - f_t^k(x_i)\right).
	\]
Note that, since $\displaystyle \Phi_N(x,t) = g_{N,r}(x) \cdot \prod_{j=1}^{r-1} \prod_{k=0}^{N-1} \left(x - f_t^k(x_j)\right)$, the polynomial $g_{N,r}(x)$ has coefficients in $\bbC[x_1,\ldots,x_{r-1},t]$; we will sometimes write $g_{N,r}(x_1,\ldots,x_{r-1},x,t)$ to make this clear. Even better, we have the following:

\begin{lem}
Let $F_0 := \bbC(t)$, and for each $r \in \{1,\ldots,R\}$, define the field extension $F_r := \bbC(x_1,\ldots,x_r,c)$. Then $g_{N,r}$ is irreducible over $F_{r-1}$ for each $r \in \{1,\ldots,R\}$.
\end{lem}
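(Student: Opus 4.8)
The plan is to prove irreducibility of $g_{N,r}$ over $F_{r-1}$ by exhibiting a large group of automorphisms that acts transitively on the roots of $g_{N,r}$. The setup is already in place: by Theorem~\ref{thm:bousch}, $\Gal(L_N/\bbC(t))$ is the wreath product $(\bbZ/N\bbZ) \wr S_R$ acting on the $R$ cycles $\calO(x_1),\ldots,\calO(x_R)$. First I would identify the subgroup $H_{r-1} \le \Gal(L_N/\bbC(t))$ fixing $F_{r-1} = \bbC(x_1,\ldots,x_{r-1},t)$. An element of the wreath product fixes $x_1,\ldots,x_{r-1}$ if and only if it fixes the first $r-1$ cycles pointwise; concretely, $H_{r-1} \cong (\bbZ/N\bbZ)^{R-r+1} \rtimes S_{R-r+1}$, the wreath product acting only on the remaining cycles $\calO(x_r),\ldots,\calO(x_R)$, since this group obviously fixes each of $x_1,\ldots,x_{r-1}$ and has the right index ($[L_N:F_{r-1}] = |H_{r-1}|$ follows from $[L_N:\bbC(t)] = R!\,N^R$ once one checks $[F_{r-1}:\bbC(t)] = \binom{R}{r-1}(r-1)!\,N^{r-1} \cdot$ (something) — more cleanly, $F_{r-1} \subseteq L_N$ and the orbit of $(x_1,\ldots,x_{r-1})$ under the wreath product has size $R(R-1)\cdots(R-r+2)\,N^{r-1}$, whose product with $|H_{r-1}|$ is $R!\,N^R$).

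The key step is then transitivity. The roots of $g_{N,r}$ are exactly $\{f_t^k(x_i): r \le i \le R,\ 0 \le k \le N-1\}$, i.e.\ the union of the last $R-r+1$ cycles. The group $H_{r-1} \cong (\bbZ/N\bbZ) \wr S_{R-r+1}$ visibly acts transitively on this set: given two roots $f_t^k(x_i)$ and $f_t^{k'}(x_{i'})$, first apply an element of $S_{R-r+1}$ sending cycle $i$ to cycle $i'$, then apply the cyclic generator in the $i'$-th copy of $\bbZ/N\bbZ$ an appropriate number of times to adjust $k \mapsto k'$. Hence $H_{r-1}$ acts transitively on the roots of $g_{N,r}$. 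Since $g_{N,r} \in F_{r-1}[x]$ has all its roots permuted transitively by $\Gal(L_N/F_{r-1}) = H_{r-1}$ (acting through its quotient $\Gal(F_{r-1}(x_r,\ldots,x_R)/F_{r-1})$), and since $g_{N,r}$ is separable (it divides $\Phi_N$, which is separable as $f_t^N(x)-x$ is squarefree in characteristic zero), it follows that $g_{N,r}$ is irreducible over $F_{r-1}$.

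The main obstacle I anticipate is the bookkeeping needed to justify that $H_{r-1}$ is genuinely the full Galois group $\Gal(L_N/F_{r-1})$ and not merely a subgroup of it — equivalently, that $F_{r-1}$ is exactly the fixed field of $(\bbZ/N\bbZ)\wr S_{R-r+1}$ rather than a proper subfield. One clean way around this: one does not actually need $H_{r-1}$ to be the \emph{whole} Galois group. It suffices to observe that $H_{r-1}$, as described, genuinely consists of automorphisms of $L_N$ that fix $F_{r-1}$ (this is immediate: each such automorphism fixes $x_1,\ldots,x_{r-1}$ and $t$, hence fixes $F_{r-1}$), so $H_{r-1} \subseteq \Gal(L_N/F_{r-1})$; and any subgroup of the Galois group that acts transitively on the roots of a separable polynomial in $F_{r-1}[x]$ forces that polynomial to be irreducible. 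So the argument reduces to the two clean facts: (i) the explicitly described wreath-product elements fix $x_1,\ldots,x_{r-1}$, and (ii) they act transitively on $\{f_t^k(x_i): i \ge r\}$. Both are straightforward from Theorem~\ref{thm:bousch}; the degree computation can be relegated to a remark or omitted entirely.
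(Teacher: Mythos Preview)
Your argument is correct. The final paragraph is the cleanest form: once you observe that the explicitly described subgroup $H_{r-1} \cong (\bbZ/N\bbZ) \wr S_{R-r+1}$ of $\Gal(L_N/\bbC(t))$ fixes $x_1,\ldots,x_{r-1}$ (hence lies in $\Gal(L_N/F_{r-1})$) and acts transitively on $\{f_t^k(x_i) : i \ge r\}$, irreducibility of the separable polynomial $g_{N,r}$ over $F_{r-1}$ follows immediately. You are also right that there is no need to verify that $H_{r-1}$ equals the full $\Gal(L_N/F_{r-1})$, so the degree bookkeeping you worried about can be dropped entirely.

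The paper takes a genuinely different and somewhat shorter route: a pure degree count. One first notes that $F_R = L_N$ (since adjoining one point from each $N$-cycle already gives the full splitting field), so $[F_R:\bbC(t)] = R!\,N^R$ by Theorem~\ref{thm:bousch}. On the other hand, the tower $F_0 \subset F_1 \subset \cdots \subset F_R$ gives $[F_R:\bbC(t)] = \prod_{r=1}^R [F_r:F_{r-1}] \le \prod_{r=1}^R \deg g_{N,r} = \prod_{r=1}^R (R-r+1)N = R!\,N^R$, with equality at each step if and only if $g_{N,r}$ is irreducible over $F_{r-1}$. Since the two ends agree, equality is forced. Your transitivity argument uses the \emph{structure} of the Galois group from Bousch's theorem, whereas the paper's argument uses only its \emph{order}; the paper's version is more economical, but yours makes the mechanism (which elements of the Galois group move which roots) completely explicit, and that viewpoint is closer in spirit to how the paper later handles the general $X_1(G)$ in Proposition~\ref{prop:same_length}.
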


\begin{proof}
First, we note that $F_R$ is the splitting field of $\Phi_N(x,t)$, so $F_R$ is equal to the field $L_N$ described above. Now, any two periodic points in the same orbit under $f_t$ generate the same extension of $\bbC(t)$, since each can be written as an iterate of the other under $f_t$. Thus we have
	\begin{equation}\label{eq:degree}
		[F_R : \bbC(t)] \ge \prod_{r=1}^R \deg g_{N,r}(x),
	\end{equation}
with equality if and only if each $g_{N,r}$ is irreducible over $F_{r-1}$. Since $\deg g_{N,r}(x) = (R - r + 1)N$, the product on the right hand side of \eqref{eq:degree} is equal to
	\[
		\prod_{r=1}^R \left[(R - r + 1)N\right] = R!N^R = [F_R : \bbC(t)],
	\]
completing the proof.
\end{proof}

\begin{cor}
Let $C_{N,r} \subset \bbA^{r+1}$ be the curve defined by
\[
	g_{N,1}(x_1,t) = g_{N,2}(x_1,x_2,t) \cdots = g_{N,r}(x_1,\ldots,x_r,t) = 0.
\]
Then $C_{N,r}$ is irreducible over $\bbC$ with function field $F_r$.
\end{cor}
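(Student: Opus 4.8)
The plan is to induct on $r$, realizing the tower $F_0 \subseteq F_1 \subseteq \cdots \subseteq F_r$ geometrically. Write $A_r := \bbC[x_1,\ldots,x_r,t]/(g_{N,1},\ldots,g_{N,r})$, so that $C_{N,r} = \Spec A_r$; the goal is to show $A_r$ is a domain with $\Frac(A_r) = F_r$, which gives both irreducibility (indeed integrality) and the identification of the function field. For the base case one may take $r=0$, where $A_0 = \bbC[t]$ has fraction field $F_0 = \bbC(t)$; alternatively start at $r=1$, where Bousch's Theorem~\ref{thm:bousch} gives that $A_1 = \bbC[x_1,t]/(\Phi_N(x_1,t))$ is a domain with fraction field $F_1 = \bbC(x_1,t)$.

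For the inductive step I would use that $g_{N,r}$ has coefficients in the smaller ring $\bbC[x_1,\ldots,x_{r-1},t]$ (as observed in the text) and is monic in $x_r$ of degree $(R-r+1)N$, so that $A_r = A_{r-1}[x_r]/(\bar g_{N,r})$, where $\bar g_{N,r}$ is the image of $g_{N,r}$, a monic polynomial over $A_{r-1}$. By the inductive hypothesis $A_{r-1}$ is a domain with $\Frac(A_{r-1}) = F_{r-1}$; since $\bar g_{N,r}$ is monic, $A_r$ is a free $A_{r-1}$-module, hence torsion-free, hence it injects into $A_r \otimes_{A_{r-1}} F_{r-1} = F_{r-1}[x_r]/(g_{N,r})$, where $g_{N,r}$ is now regarded as a polynomial over $F_{r-1}$ via the identification $\Frac(A_{r-1}) = F_{r-1}$ (this is exactly the field $F_{r-1} = \bbC(x_1,\ldots,x_{r-1},t)$ appearing in the preceding Lemma). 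That Lemma asserts precisely that $g_{N,r}$ is irreducible over $F_{r-1}$, so $F_{r-1}[x_r]/(g_{N,r})$ is a field; since $x_r$ is a root of $g_{N,r}$ (the factor with $i=r$, $k=0$ is $x-x_r$), this field is $F_{r-1}(x_r) = F_r$. Therefore $A_r$ is a subring of the field $F_r$, hence a domain, and as $A_r \supseteq A_{r-1}$ and $x_r \in A_r$ we get $\Frac(A_r) \supseteq \Frac(A_{r-1})(x_r) = F_r$, whence $\Frac(A_r) = F_r$. This completes the induction and proves that $C_{N,r}$ is irreducible over $\bbC$ with function field $F_r$.

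There is no serious obstacle here once the preceding Lemma is available; the argument is essentially formal. The two points requiring a little care are: (i) checking that the coefficients of $g_{N,r}$ genuinely lie in $\bbC[x_1,\ldots,x_{r-1},t]$, so that reduction modulo $(g_{N,1},\ldots,g_{N,r-1})$ is well defined and compatible with the identification of $\Frac(A_{r-1})$ with the subfield $F_{r-1}$ of $\overline{\bbC(t)}$ used in the Lemma; and (ii) invoking monicity of $g_{N,r}$ in $x_r$ to pass from irreducibility of the generic fiber $F_{r-1}[x_r]/(g_{N,r})$ to integrality of the \emph{total} coordinate ring $A_r$ --- without monicity one could in principle have extra components of $C_{N,r}$ lying over a proper closed subset of $C_{N,r-1}$, so the freeness (hence torsion-freeness) of $A_r$ over $A_{r-1}$ is what makes the reduction to the generic fiber legitimate.
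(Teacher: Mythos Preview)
Your argument is correct and is exactly the standard way to deduce the Corollary from the preceding Lemma. The paper states the Corollary without proof, so there is nothing different to compare against; your induction on $r$, using monicity of $g_{N,r}$ in $x_r$ to pass from irreducibility over $F_{r-1}$ to integrality of $A_r$, is precisely the argument the paper leaves to the reader.
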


We conclude this section by verifying that $C_{N,r}$ is equal to the dynamical modular curve $Y_1(N^{(r)})$.

\begin{prop}\label{prop:Y1Nn_irred}
Let $N \ge 1$ and $1 \le r \le R(N)$ be integers. Then $Y_1(N^{(r)}) = C_{N,r}$. In particular, $Y_1(N^{(r)})$ is irreducible over $\bbC$ with function field $F_r$.
\end{prop}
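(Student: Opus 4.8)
The plan is to identify $Y_1(N^{(r)})$ with $C_{N,r}$ by exhibiting both as the same irreducible component of the (generally reducible) curve $Y_G\subseteq\bbA^{r+1}$ cut out with reduced structure by $\Phi_N(x_1,t)=\dots=\Phi_N(x_r,t)=0$, where $G$ is the graph minimally generated by $r$ points of period $N$ (so that each $\Psi_i$ equals $\Phi_N(x_i,t)$). By construction $Y_1(N^{(r)})=\overline{U}$ is the closure of the open subset $U\subseteq Y_G$ determined by the inequalities $x_i\ne f_t^k(x_j)$ for $j<i$ and $0\le k\le N-1$, so $Y_1(N^{(r)})$ is exactly the union of those irreducible components of $Y_G$ that meet $U$. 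It therefore suffices to prove that $C_{N,r}$ is one such component and that it is the only one.

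First I would show $C_{N,r}\subseteq Y_1(N^{(r)})$. By the corollary preceding the statement, $C_{N,r}$ is irreducible with function field $F_r=\bbC(x_1,\dots,x_r,t)$, where $x_1,\dots,x_r$ are roots of $\Phi_N(x,t)$ over $\bbC(t)$ with pairwise disjoint $f_t$-orbits; in particular $C_{N,r}$ has dimension $1$ and the $i$-th coordinate of its generic point is a root of $\Phi_N(x,t)$. Hence every $\Phi_N(x_i,t)$ vanishes on $C_{N,r}$, so $C_{N,r}\subseteq Y_G$, and since every component of $Y_G$ has dimension $1$, $C_{N,r}$ is an irreducible component of $Y_G$. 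Moreover the inequalities defining $U$ hold at the generic point of $C_{N,r}$ — this is precisely the hypothesis imposed on $x_1,\dots,x_r$ — so $C_{N,r}$ meets $U$ and therefore $C_{N,r}\subseteq\overline{U}=Y_1(N^{(r)})$.

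For the reverse inclusion I would argue by degrees over the $t$-line. Because $\Phi_N$ is monic in $x$, the map $(\x,t)\mapsto t$ makes $Y_G$, hence $Y_1(N^{(r)})$ and $C_{N,r}$, finite over $\bbA^1_t$, with every component dominant. For all but finitely many $c\in\bbC$ the polynomial $\Phi_N(x,c)$ is separable and its $D(N)=R(N)\,N$ roots form $R(N)$ orbits of length $N$ under $f_c$; counting tuples $(\alpha_1,\dots,\alpha_r)$ of such roots with each $\alpha_i$ outside the (necessarily disjoint) orbits of $\alpha_1,\dots,\alpha_{i-1}$ gives $|U_c|=\prod_{s=1}^{r}\bigl(R(N)-s+1\bigr)N$, which equals $[F_r:\bbC(t)]$ by the degree computation in the lemma preceding the corollary. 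Since $\overline{U}\setminus U$ is a proper closed — hence finite — subset of $\overline{U}$, the generic fibre of $Y_1(N^{(r)})\to\bbA^1_t$ coincides with that of $U$, so $Y_1(N^{(r)})\to\bbA^1_t$ also has degree $[F_r:\bbC(t)]=\deg\bigl(C_{N,r}\to\bbA^1_t\bigr)$. Now $C_{N,r}\hookrightarrow Y_1(N^{(r)})$ is a closed immersion of reduced curves that are both finite over $\bbA^1_t$ of the same degree and have all components dominant; comparing generic fibres over $\bbC(t)$, where $C_{N,r}$ contributes $\Spec F_r$ of full length inside a reduced $\bbC(t)$-algebra of that same length, forces this inclusion to be an equality. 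The irreducibility of $Y_1(N^{(r)})$ and the identification of its function field with $F_r$ then follow from the corollary.

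The step I expect to require the most care is not conceptual but the scheme-theoretic bookkeeping: one must be sure that the generic point of $C_{N,r}$ genuinely has coordinates $(x_1,\dots,x_r,t)$ in the chosen roots (so that both the relations $\Phi_N(x_i,t)=0$ and the disjointness inequalities can be read off there), that $\overline{U}$ is reduced with no component supported over a single value of $t$, and that $U_c$ really has the claimed cardinality for general $c$. All of the substantive input — irreducibility of $\Phi_N(x,t)$ (Theorem~\ref{thm:bousch}) and of $C_{N,r}$, together with the value $[F_r:\bbC(t)]=\prod_{s=1}^r(R(N)-s+1)N$ — is already supplied by Bousch's theorem and the lemma and corollary just proved. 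An alternative to the degree count is to establish the set-theoretic inclusion $U\subseteq C_{N,r}$ directly: at the generic point of any component of $Y_G$ meeting $U$, apply the factorization $\Phi_N(X,t)=g_{N,s}(x_1,\dots,x_{s-1},X,t)\cdot\prod_{j<s}\prod_{k}\bigl(X-f_t^k(x_j)\bigr)$ — an identity over the coordinate ring of $C_{N,s-1}$ — inducting on $r$ so that the first $s-1$ coordinates are already known to lie on $C_{N,s-1}$, and then specialize $X$ to the $s$-th coordinate; this sidesteps the reducedness discussion at the cost of its own coordinate-ring bookkeeping.
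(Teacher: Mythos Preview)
Your argument is correct, but the paper takes the shorter route you sketch only at the very end as an ``alternative.'' The paper's entire proof is the direct pointwise inclusion $U_1(N^{(r)}) \subseteq C_{N,r}$: for $(\alpha_1,\ldots,\alpha_r,c) \in U_1(N^{(r)})(\bbC)$ and each $i$, the factorization
\[
0 = \Phi_N(\alpha_i,c) = g_{N,i}(\alpha_1,\ldots,\alpha_i,c)\cdot \prod_{j=1}^{i-1}\prod_{k=0}^{N-1}\bigl(\alpha_i - f_c^k(\alpha_j)\bigr)
\]
forces $g_{N,i}(\alpha_1,\ldots,\alpha_i,c)=0$, since the disjoint-orbit conditions defining $U_1(N^{(r)})$ make each factor in the product nonzero. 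Taking closures gives $Y_1(N^{(r)}) \subseteq C_{N,r}$, and equality follows because $C_{N,r}$ is an irreducible curve. Your main line of argument establishes the reverse containment $C_{N,r}\subseteq Y_1(N^{(r)})$ first and then appeals to a degree count over $\bbA^1_t$; this works, but the fiber-counting and reducedness bookkeeping you flag are exactly what the paper's approach sidesteps. The factorization argument needs no induction or generic-point maneuvering---it holds identically at every closed point---so the ``coordinate-ring bookkeeping'' you worry about in the alternative is in fact nonexistent.
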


\begin{proof}
Since $Y_1(N^{(r)})$ is the closure of $U_1(N^{(r)})$ in $\bbA^{r+1}$, and since $C_{N,r}$ is an irreducible closed subscheme of $\bbA^{r+1}$, it suffices to show that $U_1(N^{(r)}) \subseteq C_{N,r}$.

Let $(\balpha,c) = (\alpha_1,\ldots,\alpha_r,c) \in U_1(N^{(r)})(\bbC)$. First, this implies that $\Phi_N(\alpha_i,c) = 0$ for each $i \in \{1,\ldots,r\}$, which allows us to write
	\begin{equation}\label{eq:gNm}
		0 = \Phi_N(\alpha_i,c) = g_{N,i}(\alpha_1,\ldots,\alpha_{i-1},\alpha_i,c) \cdot \prod_{j=1}^{i-1}\prod_{k=0}^{N-1} \left(\alpha_i - f_c^k(\alpha_j)\right).
	\end{equation}
Moreover, the fact that $(\balpha,c) \in U_1(N^{(r)})$ implies that $\alpha_i \ne f_c^k(\alpha_j)$ for each $1 \le j \le i - 1$ and $0 \le k \le N - 1$, hence $g_{N,i}(\alpha_1,\ldots,\alpha_i,c) = 0$ by \eqref{eq:gNm}. Since this holds for all $i \in \{1,\ldots,r\}$, we conclude that $(\balpha,c) \in C_{N,r}(\bbC)$.
\end{proof}

\section{Properties of dynamical modular curves}\label{sec:dyn_mod}

\subsection{Morphisms between dynamical modular curves}\label{sec:directed_system}

For a given admissible graph $G$, there is a natural morphism $\pi_G : X_1(G) \to \bbP^1$ given by projection onto the $t$-coordinate; this map is analogous to the map from the classical modular curve $\Xell_1(N)$ to the $j$-line. We now show that if $G$ and $H$ are admissible graphs with $G \supseteq H$, then there is a natural morphism $X_1(G) \to X_1(H)$. We first prove this assertion in two special cases.

\begin{lem}\label{lem:graph_morphisms}
Let $H$ be an admissible graph.
	\begin{enumerate}
		\item (Adding a cycle) Suppose $H$ contains exactly $r$ connected components terminating in cycles of length $N$, with $0 \le r < R(N)$. Let $G$ be the admissible graph obtained from $H$ by adding a new point of period $N$ (and therefore a full $N$-cycle together with its portrait $(1,N)$ preimages). Then there exists a dominant morphism $\pi : X_1(G) \to X_1(H)$ of degree $N(R(N) - r)$. In particular, $\deg \pi \ge 2$ unless $N = r = 1$.
		\item (Adding preimages to an initial point) Let $P_0$ be an initial point for $H$. Let $G$ be the admissible graph obtained by appending two preimages $P$ and $-P$ to $P_0$, as in Figure~\ref{fig:adding_preimages}. Then there is a dominant morphism $\pi : X(G) \to X(H)$ of degree two.
	\end{enumerate}
Moreover, in both cases we have $\pi_G = \pi_H \circ \pi$.
\end{lem}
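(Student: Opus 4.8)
The plan is to construct the morphism $\pi$ in each case by writing down explicit projections on the level of affine coordinates, check that these projections actually land in the target curve (using the defining polynomials and the factorizations from \textsection\ref{sec:dynatomic}--\ref{sec:gen_dynatomic}), verify dominance by computing the generic fiber, and then extend from $Y_1$ to the normalization $X_1$ by the universal property of normalization. Throughout, compatibility with $\pi_G$, $\pi_H$ will be automatic because all the projections fix the $t$-coordinate.

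For part (A), I would choose minimal generating sets compatibly: pick a minimal generating set $\Q = \{Q_1,\ldots,Q_m\}$ for $H$, and take $\P = \{Q_1,\ldots,Q_m,P_{m+1}\}$ for $G$, where $P_{m+1}$ is the newly added point of period $N$. Since the new cycle is disjoint from $H$, we have $m+1 \in \calD(\P)$, so $Y_1(G) \subseteq \bbA^{m+2}$ is cut out by the equations defining $Y_1(H)$ in the variables $(x_1,\ldots,x_m,t)$ together with $\Phi_N(x_{m+1},t)=0$ and the disjointness conditions \eqref{eq:disjoint_orbits} separating the orbit of $x_{m+1}$ from the other period-$N$ orbits. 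The projection $\pi: (x_1,\ldots,x_m,x_{m+1},t) \mapsto (x_1,\ldots,x_m,t)$ visibly maps $Y_1(G)$ into $Y_1(H)$. For dominance and the degree: over a generic point of $Y_1(H)$ (equivalently, over the generic point of the function field $F$ of $Y_1(H)$), the fiber consists of the $x_{m+1}\in\bar F$ with $\Phi_N(x_{m+1},t)=0$ lying in an orbit disjoint from the $r$ already-present period-$N$ orbits. By Theorem~\ref{thm:bousch} (and the wreath-product description), $\Phi_N$ has exactly $R(N)$ orbits of size $N$ generically, so after removing the $r$ orbits coming from $H$ there are $N(R(N)-r)$ such roots; hence $\deg\pi = N(R(N)-r)$ provided the generic fiber is reduced, which follows since $\Phi_N(x,t)$ is separable in $x$ over $\bbC(t)$ (its roots are distinct). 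The bound $\deg\pi\ge 2$ unless $N=r=1$ is then immediate arithmetic: $N(R(N)-r)=1$ forces $N=1$ and $R(1)-r = 1$, i.e.\ $r=1$ since $R(1)=2$.

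For part (B), choose a minimal generating set $\Q$ for $H$ with $P_0 = \f^{\kappa}(Q_{j})$ for an appropriate generator $Q_j$ and offset $\kappa$ (so that the portrait data of $P_0$ is recorded), and let $\P = \Q\cup\{P\}$ with the new index $n = |\Q|+1$. Since $\f(P) = P_0 \in G_{n-1}$, we are in the case $n\in\calDbar(\P)$ with $\kappa_n = 1$, $j_n = j$, and $\lambda_n = \kappa$, so the associated polynomial from \eqref{eq:Psi} is $\Psi_n = f_t(x_n) + f_t^{\kappa}(x_j) = x_n^2 + t + f_t^{\kappa}(x_j)$. Thus $Y_1(G)\subseteq\bbA^{n+1}$ is defined by the equations of $Y_1(H)$ together with $x_n^2 = -t - f_t^{\kappa}(x_j) = -f_t^{\kappa-1}(f_t(x_j))\cdots$; concretely $x_n^2 = -f_t^{\kappa}(x_j) - t$, i.e.\ $x_n$ is a square root of $-\,$(the value that $P_0$ represents). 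Projection $\pi:(x_1,\ldots,x_{n-1},x_n,t)\mapsto(x_1,\ldots,x_{n-1},t)$ maps $Y_1(G)$ to $Y_1(H)$; dominance and $\deg\pi = 2$ follow once I check that the element $-f_t^{\kappa}(x_j)-t$ of the function field $F$ of $Y_1(H)$ is not a square in $F$. This non-squareness is exactly what ensures $P$ is a genuinely new vertex with two distinct preimages generically (equivalently, that $0$ is not generically in the orbit structure), and I expect it to be the main obstacle: it is the one point where I cannot just quote a factorization but must invoke irreducibility/Galois-theoretic input — namely that $Y_1(H)$ is irreducible (Theorem~\ref{thm:main_irred}, or Bousch's theorem in the cases needed here) and that the preperiodic points generate a field in which this particular element fails to be a square, which can be seen from the structure of $\Gal(L/\bbC(t))$ in Theorem~\ref{thm:main_galois}/Theorem~\ref{thm:bousch} (a transposition-type automorphism of the two preimages acts by $x_n\mapsto -x_n$ and fixes $F$, so $[F(x_n):F]=2$).

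Finally, in both cases, having produced a dominant morphism $Y_1(G)\to Y_1(H)$ of the stated degree that commutes with projection to $\bbA^1$, I extend it to the normalizations of the projective closures: $X_1(G)\to X_1(H)$ exists and is unique by the universal property of normalization (the composite $X_1(G)\to Y_1(H)\hookrightarrow \bbP^{\bullet}$ followed by taking closure lifts uniquely through the normalization $X_1(H)$ since $X_1(G)$ is normal and the map is dominant on function fields), the degree is preserved because it is computed on function fields, and $\pi_G = \pi_H\circ\pi$ holds on a dense open set hence everywhere by separatedness. The only genuinely non-formal input is the non-squareness statement in part (B) and the orbit-counting for the generic fiber in part (A), both of which reduce to Bousch's theorem (Theorem~\ref{thm:bousch}).
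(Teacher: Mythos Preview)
Your argument for part (B) has two gaps. The more serious one is circularity: to show that $-f_t^\kappa(x_j) - t$ is not a square in the function field of $Y_1(H)$, you invoke Theorem~\ref{thm:main_irred} and Theorem~\ref{thm:main_galois}. But Theorem~\ref{thm:main_irred} is proven via Proposition~\ref{prop:same_length}, whose proof explicitly relies on the present lemma; indeed, the non-squareness you want is exactly what Proposition~\ref{prop:same_length} establishes \emph{later}, using Theorem~\ref{thm:bousch_gen}, and only in the special case where all cycles have a single length. Theorem~\ref{thm:bousch} alone does not handle a general $H$. The paper sidesteps the whole issue via Remark~\ref{rem:irred}: at this stage ``degree'' is taken to mean the number of preimages of a typical point of $U_1(H)(\bbC)$, not the degree of a function field extension, so one only needs to observe that $f_c(x) + f_c^k(\alpha_1) = 0$ has two distinct roots in $x$ for all but finitely many $(\alpha_1,\ldots,\alpha_n,c)$ --- no irreducibility or Galois input is required here.

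The second gap is that your generating set $\Q \cup \{P\}$ for $G$ need not be minimal. If the generator $Q_j$ with $P_0 \in \calObar(Q_j)$ already lies in $\calObar(P)$ (for instance when $Q_j = -P_0$), then a minimal generating set for $G$ has the \emph{same} size as $\Q$, so $Y_1(G)$ sits in $\bbA^{|\Q|+1}$ rather than $\bbA^{|\Q|+2}$, and the map to $Y_1(H)$ is $(x,x_2,\ldots,x_n,t)\mapsto(\pm f_t^\ell(x),x_2,\ldots,x_n,t)$ rather than a coordinate projection. The paper handles this with an explicit case split on $|\P'|$. (A related minor slip: since $P_0$ is initial, you cannot have $P_0 = \f^\kappa(Q_j)$ with $\kappa \ge 1$; the correct relation is $-P_0 = \f^k(Q_j)$.) Your part (A) is essentially the paper's argument.
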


\begin{figure}
\centering
%    \begin{tikzpicture}[scale=.65]
%\tikzset{vertex/.style = {}}
%\tikzset{every loop/.style={min distance=10mm,in=45,out=-45,->}}
%\tikzset{edge/.style={decoration={markings,mark=at position 1 with %
%    {\arrow[scale=1.5,>=stealth]{>}}},postaction={decorate}}}
%% vertices
%\node[vertex, label={[label distance=.1mm]$P_0$}] (P0) at  (-3.5,0) {$\bullet$};
%\node[vertex] (dots) at  (-1,0) {\Large $\cdots$};
%\node[vertex, label=$P$] (P) at  (4.3,1) {$\bullet$};
%\node[vertex, label=$-P$] (-P) at  (4.3,-1) {$\bullet$};
%\node[vertex, label=$P_0$] (P0') at  (6,0) {$\bullet$};
%\node[vertex] (dots') at  (8.5,0) {\Large $\cdots$};
%\node[vertex] (blank) at (0,0) {};
%\node[vertex] (blank') at (3,0) {};
%\node[vertex] (H) at (-2,-2) {\huge $H$};
%\node[vertex] (G) at (6,-2) {\huge $G$};
%% edges
%\draw[edge] (P0) to (dots);
%\draw[edge] (blank) to[bend left=20] (blank');
%\draw[edge] (P) to (P0');
%\draw[edge] (-P) to (P0');
%\draw[edge] (P0') to (dots');
%\end{tikzpicture}
	\includegraphics{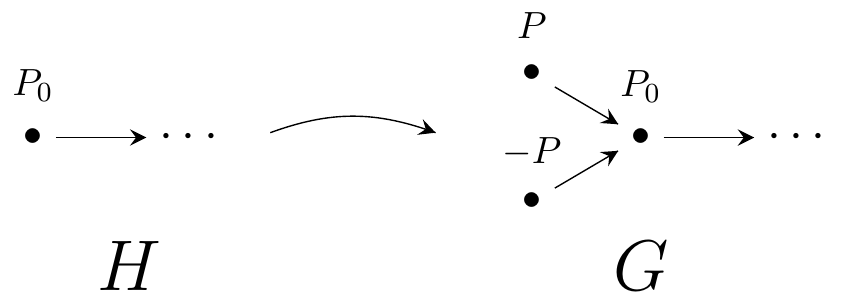}
\caption{Appending two preimages to the vertex $P_0$ in $H$}
\label{fig:adding_preimages}
\end{figure}

\begin{rem}\label{rem:irred}
Since we have not yet shown that dynamical modular curves are irreducible in general, we take the degree of a morphism simply to be the number of preimages of a typical point in the codomain.
\end{rem}

\begin{proof}[Proof of Lemma~\ref{lem:graph_morphisms}]
First, let $G$ and $H$ be as in part (A), and let $R := R(N)$ and $D := D(N) = RN$. Say $H$ is minimally generated by $\{P_1,\ldots,P_n\}$, in which case $G$ is minimally generated by $\{P,P_1,\ldots,P_n\}$, where $P$ is the new point of period $N$. Consider the natural map
	\begin{align*}
		\pi : X_1(G) &\to X_1(H)\\
		(x,x_1,\ldots,x_n,c) &\mapsto (x_1,\ldots,x_n,c).
	\end{align*}
Now let $(\alpha_1,\ldots,\alpha_n,c) \in U_1(H)(\bbC)$ be such that $f_c$ has precisely $D$ points of period $N$. The set of such points is cofinite in $X_1(H)(\bbC)$, since $X_1(H) \setminus U_1(H)$ contains finitely many points and there are only finitely many $c \in \bbC$ for which $f_c$ has fewer than $D$ points of period $N$. (These are the roots of $\disc_x \Phi_N(x,t) \in \bbC[t]$; see \cite{morton/vivaldi:1995} for a detailed study of discriminants arising in this way.)
	
The preimages of $(\alpha_1,\ldots,\alpha_n,c)$ under $\phi$ are those tuples $(\alpha,\alpha_1,\ldots,\alpha_n,c)$ for which $\alpha$ has period $N$ for $f_c$ and does not lie in the orbit of $\alpha_i$ for any $i \in \{1,\ldots,n\}$. Since $H$ contains $r$ components with $N$-cycles, and therefore contains $rN$ points of period $N$, $\alpha$ must avoid a set of size $rN$. Since $f_c$ has $D = RN$ points of period $N$, that means that $(\alpha_1,\ldots,\alpha_n,c)$ has $(D - rN) = N(R - r)$ preimages. It follows that $\phi$ is dominant of degree $N(R - r)$.

Now let $G$ and $H$ be as in part (B), so that $G$ is obtained from $H$ by appending two preimages $\pm P$ to an initial point $P_0$ of $H$. Fix a minimal generating set $\P := \{P_1,\ldots,P_n\}$ for $H$, and assume that $P_0 \not\in \P$, since otherwise we could replace the generator $P_0$ with $-P_0$. Reordering the vertices in $\P$ if necessary, we will assume that $P_0 \in \calObar(P_1)$. This means one of $\pm P_0$ lies in $\calO(P_1)$; however, since $P_0$ is an initial point of $H$, it must be that $-P_0 = \f^k(P_1)$ for some $k \ge 0$, thus
	\begin{equation}\label{eq:fP}
		\f(P) = -\f^k(P_1)
	\end{equation}
for some $k \ge 0$.

Now, observe that $\Q := \{P,P_1,\ldots,P_n\}$ is a (not necessarily minimal) generating set for $G$. By Lemma~\ref{lem:minimal_subset}, there is some subset $\P' \subseteq \Q$ that minimally generates $G$; note that $P$ must necessarily be in $\P'$, since $P$ is not in the subgraph $H$ generated by $\P$. By Lemma~\ref{lem:more_generators} we must have $|\P'| \ge n$, so we consider two cases depending on $|\P'|$.

\textbf{Case 1:} $|\P'| = n$. In this case, one of the generators $P_i$ must lie in $\calObar(P)$. We claim that $i = 1$, in which case $\P' = \{P,P_2,\ldots,P_n\}$. Indeed, suppose $P_i \in \calObar(P)$. Since $P_i \in H$ but $P \not\in H$, we must have
	\[
		P_i \in \calObar(P) \setminus \{\pm P\} = \calObar(P_0) \subseteq \calObar(P_1).
	\]
By minimality of $\P$, we must have $i = 1$. Since $P_1 \in \calObar(P)$, we can write $P_1 = \pm \f^\ell(P)$ for some $\ell \ge 1$; we therefore define a morphism $\pi: X_1(G) \to X_1(H)$ by
	\[
		(x,x_2,\ldots,x_n,t) \mapsto (\pm f_t^\ell(x),x_2,\ldots,x_n,t).
	\]
By \eqref{eq:fP}, the preimages of a given point $(\alpha_1,\alpha_2,\ldots,\alpha_n,c)$ are precisely those points $(\alpha,\alpha_2,\ldots,\alpha_n,c)$ satisfying $f_c(\alpha) = -f_c^k(\alpha_1)$, which is quadratic in $\alpha$. Thus the degree of this morphism is two.

\textbf{Case 2:} $|\P'| = n + 1$. In this case, we necessarily have $\P' = \{P,P_1,\ldots,P_n\}$, so the natural morphism in this case is
	\begin{align*}
	X_1(G) &\to X_1(H)\\
		(x,x_1,\ldots,x_n,t) &\mapsto (x_1,\ldots,x_n,t).
	\end{align*}
The preimages of $(\alpha_1,\ldots,\alpha_n,c)$ are those points $(\alpha,\alpha_1,\ldots,\alpha_n,c)$ satisfying $f_c(\alpha) = -f_c^k(\alpha_1)$ as in the previous case, so we again have a morphism of degree two. Finally, we note that the $t$-coordinate remains unchanged under each of the morphisms described above, so it follows immediately that $\pi_G = \pi_H \circ \pi$ in each case.
\end{proof}

Now let $G$ be an admissible graph, and let $H$ be an arbitrary admissible subgraph. Then there exists a sequence of admissible subgraphs
	\[ H = H_0 \subset H_1 \subset \cdots \subset H_{n-1} \subset H_n = G, \]
where each $H_k$ is obtained from $H_{k-1}$ by either adding a cycle or adding preimages to an initial point as in Lemma~\ref{lem:graph_morphisms}. It then follows from Lemma~\ref{lem:graph_morphisms} that we get a tower of curves
	\[
		X_1(G) = X_1(H_n) \to X_1(H_{n-1}) \to \cdots \to X_1(H_1) \to X_1(H_0) = X_1(H),
	\]
where each map has degree at least two, except in the case that $H_k$ is obtained from $H_{k-1}$ by adding a second fixed point, in which case the map $H_k \to H_{k-1}$ is an isomorphism. Taking the composition of these maps, we obtain the following:

\begin{prop}\label{prop:graph_morphisms}
Let $G$ be an admissible graph, and let $H$ be an admissible subgraph of $G$. Then there is a dominant morphism $\pi : X_1(G) \to X_1(H)$ satisfying $\pi_G = \pi_H \circ \pi$. Moreover, this morphism has degree at least two if and only if $G \ne H$ and $G$ is not obtained from $H$ by adding a second fixed point to $H$. In particular, if $H$ is strongly admissible and $G \ne H$, then $\deg \pi \ge 2$.
\end{prop}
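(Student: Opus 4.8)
The plan is to construct $\pi$ as a composition of the two elementary morphisms furnished by Lemma~\ref{lem:graph_morphisms}, so the argument divides into two parts: first a purely combinatorial step, showing that any inclusion $H \subseteq G$ of admissible graphs can be refined into a chain in which each link is of one of the two elementary types (``adding a cycle'' or ``adding preimages to an initial point''); and then the bookkeeping of degrees along that chain.

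For the combinatorial step I would induct on $|V(G)| - |V(H)|$, which is positive whenever $H \ne G$: an admissible subgraph of $G$ on all of $V(G)$ coincides with $G$, since each vertex has out-degree $1$ in both. Assume $H \ne G$ and pick a weakly connected component $C$ of $G$ not contained in $H$. Every vertex of $G$ has out-degree $1$, so $C$ contains a unique cycle; and admissible subgraphs are forward-closed (if $v \in H$ then the out-edge of $v$ in $H$ goes to $\f(v)$, so $\f(v) \in H$), so $H$ contains this cycle as soon as it meets $C$. If $H \cap C = \emptyset$, then the cycle of $C$, of some length $N$, is disjoint from $H$; since each cycle vertex of $G$ has in-degree $2$, $G$ contains this $N$-cycle together with its $N$ off-cycle (portrait $(1,N)$) preimages, and adjoining these to $H$ gives an admissible $H_1$ with $H \subsetneq H_1 \subseteq G$ obtained by ``adding a cycle'' (the hypothesis $r < R(N)$ in Lemma~\ref{lem:graph_morphisms}(A) holds because $H_1 \subseteq G$ and $G$, being admissible, has at most $R(N)$ components terminating in $N$-cycles). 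If instead $\emptyset \ne H \cap C \subsetneq C$, choose $v \in C \setminus H$ and let $w := \f^k(v)$ with $k \ge 1$ be the first forward iterate of $v$ lying in $H$; then $w$ has in-degree $2$ in $G$ (its preimage $\f^{k-1}(v)$ lies outside $H$), so if $w$ had in-degree $2$ in $H$ both its preimages would lie in $H$, which is impossible, whence $w$ is an initial point of $H$. Adjoining the two preimages $\pm\f^{k-1}(v)$ of $w$ (both in $G\setminus H$) gives an admissible $H_1$ with $H \subsetneq H_1 \subseteq G$ obtained by ``adding preimages to an initial point.'' In either case the inductive hypothesis applied to $H_1 \subseteq G$ yields the chain $H = H_0 \subsetneq H_1 \subsetneq \cdots \subsetneq H_m = G$.

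Given the chain, Lemma~\ref{lem:graph_morphisms} supplies dominant morphisms $\rho_k : X_1(H_k) \to X_1(H_{k-1})$ with $\pi_{H_k} = \pi_{H_{k-1}} \circ \rho_k$; setting $\pi := \rho_1 \circ \cdots \circ \rho_m$ gives a dominant morphism $X_1(G) \to X_1(H)$ with $\pi_G = \pi_H \circ \pi$. With the ``typical fiber'' convention for degree (Remark~\ref{rem:irred}), which is multiplicative along a composition of dominant generically finite morphisms of curves, $\deg\pi = \prod_k \deg\rho_k$. By Lemma~\ref{lem:graph_morphisms} every $\deg\rho_k$ is at least $2$ except in the case $N = r = 1$ of part (A) — when $\rho_k$ adjoins a second fixed point to a graph that already has one — where $\deg\rho_k = 1$. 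For $\deg\pi$ to equal $1$, every link must be of this exceptional type; but such a link requires a source with exactly one fixed point and produces a graph with two, and since $R(1) = 2$ the move of part (A) cannot then be reapplied to add a third fixed point, so the chain can contain at most one link, and if it contains one it is exactly that link. Hence $\deg\pi = 1$ precisely when the chain is empty (i.e.\ $G = H$) or is the single exceptional link (i.e.\ $G$ is obtained from $H$ by adjoining a second fixed point), which is the asserted equivalence. Finally, if $H$ is strongly admissible, then condition (c) of Definition~\ref{defn:admissible} gives $H$ either $0$ or $2$ fixed points, so $G$ cannot be $H$ with a second fixed point adjoined; together with $G \ne H$ this forces $\deg\pi \ge 2$.

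I expect the combinatorial step to be the main obstacle: one must check that the two elementary moves genuinely suffice and that at each stage the intermediate graph $H_1$ is again admissible and still embeds in $G$ — this is exactly where forward-closedness of admissible subgraphs and the in-degree-$0$-or-$2$ condition do the work. The degree count afterward is routine, the only mild subtlety being multiplicativity of the generic fiber size, which holds because dominant generically finite morphisms of curves compose with their generic fiber cardinalities multiplying.
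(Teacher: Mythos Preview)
Your proposal is correct and follows the same approach as the paper: build a chain $H = H_0 \subsetneq \cdots \subsetneq H_m = G$ of admissible subgraphs in which each step is one of the two elementary moves of Lemma~\ref{lem:graph_morphisms}, then compose the resulting morphisms and read off the degree. The paper simply asserts the existence of such a chain and states the conclusion, whereas you supply the inductive construction of the chain and the case analysis for when the composite degree equals one; both are fine and nothing is missing.
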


Now let $G$ be an arbitrary admissible graph, and write $G$ as a disjoint union of admissible subgraphs $G_1,\ldots,G_m$. Proposition~\ref{prop:graph_morphisms} tells us that, for each $i \in \{1,\ldots,m\}$, $\pi_G$ factors through a map $\pi_i : X_1(G) \to X_1(G_i)$. There is therefore a map
	\[
		X_1(G) \to X_1(G_1) \times_{\bbP^1} \cdots \times_{\bbP^1} X_1(G_m),
	\]
where the fiber product is taken relative to the projection maps $\pi_{G_i}$. In general, this morphism need not be a birational map, as illustrated in Example~\ref{ex:3cycles}: if each of $G_1$ and $G_2$ is generated by a single point of period 3, and if $G$ is the disjoint union of $G_1$ and $G_2$ (illustrated in Figure~\ref{fig:3and3}), then the fiber product $X_1(G_1) \times_{\bbP^1} X_1(G_2)$ has four irreducible components, precisely one of which is birational to $X_1(G)$. However, if we require that no two subgraphs $G_i$ have cycle lengths in common, then $X_1(G)$ is, in fact, birational to the fiber product.

\begin{prop}\label{prop:fiber_product}
Let $G$ be an admissible graph, written as a disjoint union $G = \bigsqcup_{i=1}^m G_i$ of admissible subgraphs, where $G_i$ and $G_j$ have no common cycle lengths whenever $i \ne j$. Then $X_1(G)$ is birational to the fiber product $X_1(G_1) \times_{\bbP^1} \cdots \times_{\bbP^1} X_1(G_m)$, where the fiber product is taken relative to the projection maps $\pi_{G_i}$.
\end{prop}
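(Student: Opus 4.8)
The plan is to exhibit the natural comparison morphism $\Theta\colon X_1(G)\to Z$, where $Z:=X_1(G_1)\times_{\bbP^1}\cdots\times_{\bbP^1}X_1(G_m)$, and prove that it is birational; since $X_1(G)$ is irreducible by Theorem~\ref{thm:main_irred}, this will simultaneously show that $Z$ is irreducible. To construct $\Theta$, fix a minimal generating set $\P_i$ for each $G_i$; then $\P:=\P_1\cup\cdots\cup\P_m$, listed in this order, is a minimal generating set for $G=\bigsqcup_i G_i$, and each inclusion $G_i\subseteq G$ yields by Proposition~\ref{prop:graph_morphisms} a dominant morphism $\pi_i\colon X_1(G)\to X_1(G_i)$ with $\pi_{G_i}\circ\pi_i=\pi_G$. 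The universal property of the fiber product over $\bbP^1$ then produces a unique $\Theta$ with each $\mathrm{pr}_i\circ\Theta=\pi_i$, and one checks $\pi_Z\circ\Theta=\pi_G$, where $\pi_Z\colon Z\to\bbP^1$ is the structure map (finite, since each $\pi_{G_i}$ is).

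Next I would show $\Theta$ is generically injective. Over a cofinite set of parameters $c$, the points of $U_1(G)$ above $t=c$ correspond, by Proposition~\ref{prop:right_graph} together with Corollary~\ref{cor:finite_ramified}, to tuples of preperiodic points of $f_c$ generating a subgraph of $G(f_c,K)$ isomorphic to $G$ (call these $G$-\emph{structures}), and likewise for each $U_1(G_i)$; under this dictionary $\Theta$ simply records, for a given $G$-structure, the $m$ induced $G_i$-structures obtained by restricting to the generators in $\P_i$. Since the generating set $\P$ of $G$ is the union of the $\P_i$, a $G$-structure is recovered from the collection of its $G_i$-structures, so $\Theta$ is injective on a dense open subset. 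Being a morphism of irreducible curves, it is therefore birational onto its image $Z':=\overline{\Theta(X_1(G))}$, which is a closed irreducible one-dimensional subvariety of $Z$, hence (as $\pi_Z$ is finite, so $\dim Z\le 1$) an irreducible component of $Z$.

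It remains to see $Z'=Z$, which I would do by comparing degrees over $\bbP^1$. For general $c$ one has $\#Z_c=\prod_{i=1}^m \#\bigl(X_1(G_i)\bigr)_c=\prod_{i=1}^m\deg\pi_{G_i}$, since the fiber of a fiber product is the product of the fibers. On the other hand, for general $c$ the map $\Theta$ carries $\pi_G^{-1}(c)$ bijectively onto $\prod_i\bigl(U_1(G_i)\bigr)_c$: injectivity is the previous paragraph, and surjectivity requires checking that an arbitrary choice of a $G_i$-structure on $f_c$ for each $i$ assembles into a $G$-structure. This is exactly where the hypothesis is used: the subgraph $H^{(i)}\subseteq G(f_c,K)$ generated by the $i$th chosen tuple is $\f$-closed and isomorphic (or nearly isomorphic) to $G_i$, and if $H^{(i)}$ and $H^{(j)}$ with $i\ne j$ shared a vertex, then iterating $\f$ from that vertex would exhibit a cycle lying in both, forcing $G_i$ and $G_j$ to have a common cycle length, contrary to assumption. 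Hence the $H^{(i)}$ are pairwise disjoint, their union is isomorphic to $\bigsqcup_i G_i=G$ via the identification sending each generator of $\P_i$ to the corresponding chosen point, and Proposition~\ref{prop:right_graph} places the assembled tuple in $U_1(G)$ above $c$. Therefore $\#\pi_G^{-1}(c)=\prod_i\deg\pi_{G_i}=\#Z_c$ for general $c$ (using that in characteristic zero all these dominant maps to $\bbP^1$ are generically \'etale, so a degree equals the size of a general fiber). Since $\Theta$ is generically one-to-one, $\#Z'_c=\#\pi_G^{-1}(c)=\#Z_c$ for general $c$, so the closed subvariety $Z'$ meets every general fiber of $\pi_Z$ in the full fiber; as this is a dense subset of $Z$, we get $Z'=Z$. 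Thus $Z$ is irreducible and $\Theta\colon X_1(G)\to Z$ is birational.

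I expect the one genuinely load-bearing step to be the surjectivity half of the fiberwise bijection --- verifying that independently chosen $G_i$-structures glue to a $G$-structure. Everything there rests on the ``no common cycle length'' hypothesis together with the elementary observation that two $\f$-closed subgraphs which meet must share a cycle; the remaining ingredients (building $\Theta$, generic injectivity, and the passage from a fiberwise count to a birational equivalence, with the routine care that the relevant morphisms to $\bbP^1$ are generically \'etale) are straightforward.
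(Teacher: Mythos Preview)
Your argument has a circularity problem: you invoke Theorem~\ref{thm:main_irred} to conclude that $X_1(G)$ is irreducible, but in the paper Proposition~\ref{prop:fiber_product} is a lemma \emph{used in the proof of} Theorem~\ref{thm:main_irred} (see the final displayed equation in the proof of that theorem). Indeed, Remark~\ref{rem:irred} explicitly notes that at this stage irreducibility has not yet been established. So as written, the proof is not valid in the paper's logical order.

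Fortunately, your use of irreducibility is inessential. Your own fiber count shows that $\Theta$ restricts to a bijection between $\pi_G^{-1}(c)$ and $Z_c$ for general $c$; in characteristic zero this alone forces $\Theta$ to be birational, with no need to first identify an irreducible component $Z'$ and then argue $Z'=Z$. Once you strip out the appeals to Theorem~\ref{thm:main_irred}, what remains is essentially the paper's argument: the paper constructs the comparison map in the other direction, sending a point of $U_1(G_1)\times_{\bbP^1}\cdots\times_{\bbP^1}U_1(G_m)$ to the concatenated tuple in $\bbA^{1+\sum n_i}$, and checks directly (via Proposition~\ref{prop:right_graph}) that its image is exactly $U_1(G)$. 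The load-bearing step---that independently chosen $G_i$-structures glue to a $G$-structure because distinct eventual periods force disjoint orbits---is identical to yours.
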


\begin{proof}
Since $U_1(H)$ is dense in $X_1(H)$ for any admissible graph $H$, it suffices to prove the statement for the curves $U_1(\cdot)$ rather than $X_1(\cdot)$.

For each $i = 1,\ldots,m$, let $\P_i := \{P_{i,1},\ldots,P_{i,n_i}\}$ be a minimal generating set for $G_i$. Since the $G_i$ are disjoint, this implies that $\P_1 \cup \cdots \cup \P_m$ is a minimal generating set for $G$. Consider the morphism
	\begin{align*}
	\psi : U_1(G_1) \times_{\bbP^1} \cdots \times_{\bbP^1} U_1(G_m) &\to \bbA^{1 + \sum_{i=1}^m n_i}\\
	((\x_1,t),\ldots,(\x_m,t)) &\mapsto (\x_1,\ldots,\x_m,t),
	\end{align*}
where $\x_i$ is a tuple of indeterminates $(x_{i,1},\ldots,x_{i,n_i})$ corresponding to the generating set $\P_i$ for $G_i$. The map $\psi$ is clearly injective, so it remains only to show that the image of $\psi$ is $U_1(G) \subset \bbA^{1 + \sum n_i}$.

Let $((\balpha_1,c),\ldots,(\balpha_m,c))$ be a point on the fiber product, where $\balpha_i = (\alpha_{i,1},\ldots,\alpha_{i,n_i})$. For each $i = 1,\ldots,m$, the set $\{\alpha_{i,1},\ldots,\alpha_{i,n_i}\}$ generates a subgraph of $G(f_c,\bbC)$ isomorphic or nearly isomorphic to $G_i$ via an identification $P_{i,j} \mapsto \alpha_{i,j}$. At the cost of excluding only finitely many points on the fiber product (see Corollary~\ref{cor:finite_ramified}), we assume that the corresponding subgraph of $G(f_c,\bbC)$ is isomorphic to $G_i$ for each $i = 1,\ldots,m$. Since the graphs $G_1,\ldots,G_m$ have pairwise distinct cycle lengths, the eventual periods of $\alpha_{i,j}$ and $\alpha_{i',j'}$ are distinct for any $i \ne i'$. In particular, this implies that the orbits of $\alpha_{i,j}$ and $\alpha_{i',j'}$ are disjoint whenever $i \ne i'$. Thus the subgraph of $G(f_c,\bbC)$ generated by $\{\balpha_1,\ldots,\balpha_m\}$ is isomorphic to the disjoint union of the graphs $G_i$ --- hence is isomorphic to $G$ --- via an identification $P_{i,j} \mapsto \alpha_{i,j}$ for all $1 \le i \le m$ and $1 \le j \le n_i$. It follows that the point $(\balpha_1,\ldots,\balpha_m,c) = \psi((\balpha_1,c),\ldots,(\balpha_m,c))$ lies on $U_1(G)$ by Proposition~\ref{prop:right_graph}, thus the image of $\psi$ is contained in $U_1(G)$.

It now remains to show that $\psi$ maps onto $U_1(G)$. If $(\balpha_1,\ldots,\balpha_m,c) \in U_1(G)(\bbC)$, then $\{\balpha_1,\ldots,\balpha_m\}$ generates a subgraph of $G(f_c,\bbC)$ isomorphic or nearly isomorphic to $G$ via an identification $P_{i,j} \mapsto \alpha_{i,j}$. By restricting this isomorphism to each of the subgraphs $G_1,\ldots,G_m$, it follows immediately that, for each $i = 1,\ldots,m$, the subgraph of $G(f_c,\bbC)$ generated by $\balpha_i$ is (nearly) isomorphic to $G_i$ via the appropriate identification. Thus $(\balpha_i,c) \in U_1(G_i)$ for each $i = 1,\ldots,m$, and it follows that $(\balpha_1,\ldots,\balpha_m,c) = \psi((\balpha_1,c),\ldots,(\balpha_m,c))$ is in the image of $\psi$.
\end{proof}

\subsection{Irreducibility}\label{subsec:irred}

In this section, we prove our main result:

\begin{mainirredthm}
Let $K$ be a field of characteristic zero. For any admissible graph $G$, the curve $X_1(G)$ is irreducible over $K$.
\end{mainirredthm}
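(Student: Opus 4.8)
The plan is to reduce to $K=\bbC$ and then recast irreducibility of $X_1(G)$ as the transitivity of a Galois action on the preperiodic points of the universal map $f_t(z)=z^2+t$. Every curve $X_1(G)$ is defined over $\bbQ$ — indeed over $\bbZ$, cf.\ Remark~\ref{rem:3cycles} — so it is irreducible over \emph{every} field of characteristic zero precisely when it is geometrically irreducible, and geometric irreducibility may be checked after base change to $\bbC$. So from now on $K=\bbC$.

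\textbf{Galois reformulation.} Since $X_1(G)$ is the normalization of the projective closure of $Y_1(G)$, and $Y_1(G)$ is the closure of the dense open set $U_1(G)$, irreducibility of $X_1(G)$ over $\bbC$ is equivalent to irreducibility of $U_1(G)$. The map $\pi_G\colon U_1(G)\to\bbA^1_t$ is finite and dominant, and because $U_1(G)$ is reduced and we are in characteristic zero, its generic fibre is an \'etale $\bbC(t)$-algebra; hence $U_1(G)$ is irreducible if and only if $\Gal(\overline{\bbC(t)}/\bbC(t))$ acts transitively on that geometric generic fibre. By Proposition~\ref{prop:right_graph}, applied over the characteristic-zero field $\overline{\bbC(t)}$, this fibre is exactly the set of tuples $(\alpha_1,\dots,\alpha_n)\in\bigl(\overline{\bbC(t)}\bigr)^n$ for which $\{\alpha_1,\dots,\alpha_n\}$ generates a subgraph of $\mathcal{G}:=G(f_t,\overline{\bbC(t)})$ \emph{isomorphic} to $G$ via $P_i\mapsto\alpha_i$; there is no ``nearly isomorphic'' exception, since $0$ is visibly not preperiodic for $f_t$ over $\bbC(t)$. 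So the theorem reduces to the claim that $\Gal(\overline{\bbC(t)}/\bbC(t))$ permutes the labelled copies of $G$ inside $\mathcal{G}$ transitively.

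\textbf{The combinatorial ingredient.} Because $f_t$ is the generic quadratic polynomial, $\mathcal{G}$ has exactly $R(N)$ cycles of length $N$ for every $N$, and the iterated preimages of each periodic point form a full binary tree. Consequently the group $\Aut(\mathcal{G})$ of vertex-bijections commuting with $\f$ acts transitively on the labelled copies of any fixed admissible $G$: by Lemma~\ref{lem:graph_iso} such a copy is determined by the portrait data $(M_i,N_i)$ for $i\in\calD(\P)$ together with the attaching data $(\kappa_i,j_i,\lambda_i)$ for $i\in\calDbar(\P)$, and any two copies realizing the same data are carried onto one another by an element of $\Aut(\mathcal{G})$ assembled from (i) a permutation of the $R(N)$ $N$-cycles, (ii) a rotation within each cycle, and (iii) independent swaps of the two incoming preimages at the nodes of the preimage trees. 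The verification is an induction on $|\P|$ that extends an automorphism matching $G_{i-1}$ to one matching $G_i$, running in parallel with the inductive step of Lemma~\ref{lem:graph_iso}.

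\textbf{The arithmetic ingredient, and the main obstacle.} It now suffices to show that the image of $\Gal(\overline{\bbC(t)}/\bbC(t))$ in the permutation group of the preperiodic points of portrait $(m,n)$ with $m\le M$ and $n\le N$ — where $M,N$ are the largest preperiod and cycle length occurring in $G$ — is \emph{all} of the corresponding $\Aut$-group of this finite truncation of $\mathcal{G}$. I would prove this by induction along the tower of Lemma~\ref{lem:graph_morphisms} and Proposition~\ref{prop:graph_morphisms}, building the truncation one cycle and one preimage-level at a time. Adding a cycle is handled by Bousch's Theorem~\ref{thm:bousch} (and its extension to several periods, equivalently the irreducibility of the polynomials $g_{N,r}$ in Proposition~\ref{prop:Y1Nn_irred}), which already yields the full wreath-product action on periodic points. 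Passing from portrait $(m,\cdot)$ to portrait $(m+1,\cdot)$ amounts, via Lemma~\ref{lem:phiMNalt}, to adjoining the square roots $\sqrt{\gamma-t}$ as $\gamma$ ranges over the previously constructed points; by Kummer theory the Galois group of this step is the full elementary abelian $2$-group exactly when the classes $[\gamma-t]$ are $\bbF_2$-linearly independent in the square-class group of the intermediate field. This last point is the technical heart of the argument — and where I expect the real work — and I would establish it by a ramification/specialization analysis: at the post-critically-finite (Gleason/Misiurewicz) parameter where $0$ becomes a preimage of $\gamma$, the function $\gamma-t$ acquires a zero of odd order while the other $\gamma'-t$ do not, so no nonempty subproduct can be a square. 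Combining the two ingredients gives the required transitivity, hence irreducibility of $X_1(G)$ over $\bbC$, and therefore over every field of characteristic zero; these ``properties of preperiodic points'' are precisely what also yields Theorem~\ref{thm:main_galois}.
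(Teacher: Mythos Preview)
Your strategy---reduce to $\bbC$, recast irreducibility as transitivity of the absolute Galois group on labelled embeddings $G\hookrightarrow\mathcal{G}$, then show the Galois image is all of $\Aut$ of the relevant truncation---is sound in outline, and is in fact how the paper deduces Theorem~\ref{thm:main_galois} \emph{from} Theorem~\ref{thm:main_irred}. You are proposing to run that implication in the other direction, which is legitimate, but there is a real gap in your arithmetic ingredient.

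The gap is the step ``adding a cycle'' when the new cycle has length $N$ \emph{different} from the cycle lengths already present. Theorem~\ref{thm:bousch} and Proposition~\ref{prop:Y1Nn_irred} give the full wreath product on period-$N$ points for a \emph{fixed} $N$: the polynomials $g_{N,r}$ there all share one period. Neither result shows that $\Phi_N(x,t)$ remains irreducible over the function field $K_H$ of the graph $H$ already built when $H$ contains cycles of other lengths. Your parenthetical ``extension to several periods'' names exactly the missing statement, and it is not a formality---it is Morton's theorem \cite[Thm.~D]{morton:1998gal}, and in the paper it is supplied by Lemma~\ref{lem:branch} and Corollary~\ref{cor:branch}: for $N_1\ne N_2$ the maps $X_1(G_{M_i,N_i})\to\bbP^1$ share no finite branch point, because a parameter $c$ for which $\Phi_{M,N}(x,c)$ has a repeated root is either a root of a period-$N$ hyperbolic component of $\calM$ or a Misiurewicz point with critical eventual period $N$, and neither can hold for two distinct $N$ simultaneously. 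This branch-locus disjointness, fed into Lemma~\ref{lem:fiber_product_branch}, is the one genuinely new input; your proposal assumes it away.

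Separately, your Kummer/specialization argument for the preimage step is plausible but superfluous: Bousch's generalized result (Theorem~\ref{thm:bousch_gen}) already computes $\Gal(L_{M,N}/\bbC(t))\cong\Aut(G_{M,N})$ for every $M\ge 0$, so the preperiodic levels come for free once you quote it. The paper's proof accordingly invokes Theorem~\ref{thm:bousch_gen} directly to handle all graphs with a single cycle length (Proposition~\ref{prop:same_length}), passes to the normal closure, and then uses the branch-locus argument to glue different periods as a fiber product. If you insert the missing linear-disjointness input for distinct periods, your route would go through; as written, it does not.
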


By the Lefschetz principle, it suffices to prove Theorem~\ref{thm:main_irred} only for $K = \bbC$. The proof requires two main steps: the first is to prove the statement for admissible graphs $G$ whose cycles all have the same length, and we then use a simple geometric lemma to extend the result to all admissble graphs. We begin with the following result of Bousch, which extends Theorem~\ref{thm:bousch} to generalized dynatomic polynomials.

\begin{thm}[{Bousch \cite[\textsection 3, Thms. 1 and 4]{bousch:1992}}]\label{thm:bousch_gen}
For each pair of integers $M \ge 0$ and $N \ge 1$, the polynomial $\Phi_{M,N}(x,t)$ is irreducible over $\bbC$. If we let $L_{M,N}$ be the splitting field of $\Phi_{M,N}(x,t)$ over $\bbC(t)$, then $\Gal(L_{M,N}/\bbC(t))$ consists of all permutations of the roots of $\Phi_{M,N}(x,t) \in \bbC(t)[x]$ that commute with $x \mapsto f_t(x)$.
\end{thm}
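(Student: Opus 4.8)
The final displayed statement is Bousch's Theorem~\ref{thm:bousch_gen}, which is quoted from the literature; in the paper itself it is invoked as a black box rather than reproved. So what follows is a proposal for how one would prove it (i.e. how Bousch's argument goes), assuming everything stated earlier in the excerpt — in particular Theorem~\ref{thm:bousch} (the periodic case), Lemma~\ref{lem:phiMNalt}, and the degree formulas $\deg_x \Phi_{M,N} = 2^{M-1}D(N)$.

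\medskip

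The plan is to \emph{reduce the generalized case to the periodic case via Lemma~\ref{lem:phiMNalt}}, and then run an induction on the preperiod $M$ to control the full Galois group. First I would treat $M=0$, which is exactly Theorem~\ref{thm:bousch}, and record the structure of $G_0 := \Gal(L_{0,N}/\bbC(t))$ as the wreath product $(\bbZ/N\bbZ)\wr S_{R(N)}$ acting on the $D(N)$ periodic points of formal period $N$. For $M \ge 1$, the key identity is $\Phi_{M,N}(x,t) = \Phi_N(-f_t^{M-1}(x),t)$: a root $\alpha$ of $\Phi_{M,N}$ is precisely a point whose image $\beta := -f_t^{M-1}(\alpha)$ is a root of $\Phi_N$, and conversely each such $\beta$ has exactly $2^{M-1}$ preimages under $\alpha \mapsto -f_t^{M-1}(\alpha)$ (a composition of $M-1$ squarings followed by a sign, all of degree $2$), consistent with the degree count $2^{M-1}D(N)$. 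Thus $L_{M,N} \supseteq L_{0,N}$, and $L_{M,N}$ is obtained from $L_{0,N}$ by adjoining, for each periodic root $\beta$, a full binary tree of iterated square-root-type preimages of height $M-1$.

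The heart of the argument is then to show this tower of preimage-adjunctions is as large as possible, i.e. that $[L_{M,N}:L_{0,N}] = 2^{(M-1)D(N)}$, which forces $\Gal(L_{M,N}/\bbC(t))$ to be the full group of portrait-respecting permutations (an iterated wreath product of $\bbZ/2\bbZ$'s over the periodic structure, sitting on top of $G_0$). The cleanest way to do this is by induction on $M$: given the result for $M-1$, one adjoins the last layer of preimages — one square-root extension $\bbC(t)(\ldots)[\sqrt{u}]$ per vertex at depth $M-2$ — and must check these are independent, i.e. that no nontrivial product of the relevant ``radicands'' is already a square in the field generated by the lower layers. This is a specialization/ramification argument: one finds a place of $\bbC(t)$ (equivalently, specializes $t \mapsto c$ for suitably generic or suitably chosen $c \in \bbC$) at which each radicand acquires odd valuation independently — Bousch does this by a careful analysis near the parameters where $f_c$ has a parabolic cycle or where preimages collide, using the explicit geometry of the Mandelbrot set / the combinatorics of the dynatomic curves. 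Equivalently one can argue algebraically: an element of the lower field is fixed by the subgroup already known to be in the Galois group, and the wreath-product structure from the inductive hypothesis shows that subgroup acts transitively enough that no such product of radicands can be a square unless it is empty.

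\textbf{Main obstacle.} Everything reduces to the ``square root independence'' step: proving that the $2$-extension at each new layer genuinely doubles the degree, with no collapsing coming from hidden algebraic relations among iterated preimages of the periodic points. The difficulty is that the radicands are not independent indeterminates — they are polynomial expressions in $t$ and in the already-adjoined roots, tightly constrained by the dynamics (they all trace back to the same family $f_t$), so one cannot just cite Kummer theory for a rational function field naively. Bousch's resolution uses the ramification behavior of $\Phi_{M,N}$ over specific points of the $t$-line (boundary points of hyperbolic components, Misiurewicz parameters), and translating that into the statement ``the Galois group contains every transposition/$2$-cycle it is allowed to contain, hence is everything'' is the technical core. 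A secondary, more bookkeeping obstacle is verifying that the full permutation group commuting with $f_t$ on the set of formal-portrait-$(M,N)$ points is exactly this iterated wreath product, so that matching the order $2^{(M-1)D(N)} \cdot R(N)! \, N^{R(N)}$ with $[L_{M,N}:\bbC(t)]$ closes the argument; this is elementary group theory once the degree is known, and irreducibility of $\Phi_{M,N}$ over $\bbC$ is the immediate corollary of transitivity of the resulting group on the root set.
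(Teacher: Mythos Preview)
You correctly observe that the paper does not prove this theorem: it is cited from Bousch's thesis and used as a black box, so there is no ``paper's proof'' to compare against. Your sketch of how such a proof would proceed --- reduce to the periodic case $M=0$ (Theorem~\ref{thm:bousch}) via the identity $\Phi_{M,N}(x,t)=\Phi_N(-f_t^{M-1}(x),t)$, then induct on the preperiod by adjoining successive layers of square-root preimages and proving their independence by a ramification/specialization argument --- is a faithful outline of the standard approach and correctly isolates the real difficulty (the Kummer-type independence step).

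One small bookkeeping correction: your claimed degree $[L_{M,N}:L_{0,N}]=2^{(M-1)D(N)}$ is off. Passing from $L_{m-1,N}$ to $L_{m,N}$ for $m\ge 2$ adjoins the $2^{m-1}D(N)$ points of portrait $(m,N)$, which come in $\pm$-pairs, so there are $2^{m-2}D(N)$ independent quadratic extensions at that step. Summing over $2\le m\le M$ gives
\[
[L_{M,N}:L_{0,N}] \;=\; 2^{\,D(N)\sum_{m=2}^{M}2^{m-2}} \;=\; 2^{\,(2^{M-1}-1)D(N)},
\]
which matches $|\Aut(G_{M,N})| = 2^{(2^{M-1}-1)D(N)}\cdot R(N)!\,N^{R(N)}$. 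This does not affect your strategy, only the final order count.
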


\begin{rem}\label{rem:auts}
Let $G_{M,N}$ be the admissible graph with ``full level-$(M,N)$ structure," in the sense that $G_{M,N}$ is maximal among all admissible graphs whose cycles all have length $N$ and whose vertices have preperiod at most $M$. Then the Galois group of the splitting field of $\Phi_{M,N}$, described in Theorem~\ref{thm:bousch_gen}, can easily be seen to be isomorphic to $\Aut(G_{M,N})$.
\end{rem}

The following is a consequence of Theorem~\ref{thm:bousch_gen}:

\begin{prop}\label{prop:same_length}
Let $G$ be an admissible graph whose cycles all have the same length. Then $X_1(G)$ is irreducible over $\bbC$.
\end{prop}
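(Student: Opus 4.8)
The strategy is to reduce the general case to the ``full level'' case handled by Bousch (Theorem~\ref{thm:bousch_gen}) by induction on the number $n$ of generators in a minimal generating set $\P = \{P_1,\ldots,P_n\}$ for $G$. Write $N$ for the common cycle length of $G$. Let $G_{M,N}$ be the ``full level-$(M,N)$'' graph from Remark~\ref{rem:auts}, where $M$ is the largest preperiod of a vertex of $G$; then $G$ is an admissible subgraph of $G_{M,N}$. The function field of $X_1(G_{M,N})$ is a subfield of $L_{M,N}$, and by Theorem~\ref{thm:bousch_gen} we understand $\Gal(L_{M,N}/\bbC(t))$ completely as the group of $f_t$-equivariant permutations of the roots of $\Phi_{M,N}$, i.e.\ as $\Aut(G_{M,N})$. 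I would first observe that, by Proposition~\ref{prop:graph_morphisms}, there is a dominant morphism $X_1(G_{M,N}) \to X_1(G)$, so it suffices to identify the function field of $X_1(G)$ as a subfield of $L_{M,N}$ on which the relevant part of the Galois group acts transitively on geometric points of the generic fiber — equivalently, to show that $X_1(G)$ has a single irreducible component dominating $\bbP^1_t$.

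**Key steps.** (1) Set up coordinates: choose roots of $\Phi_{M,N}$ in $\bar{\bbC(t)}$ organized into $R(N)$ disjoint $N$-cycles with their preimage trees, matching the vertices of $G_{M,N}$; the vertices $P_1,\ldots,P_n$ of $G$ then correspond to a specific choice of elements $\alpha_1,\ldots,\alpha_n$, giving a point of $U_1(G)$ over the generic point of $\bbP^1_t$, hence a component $Z$ of $X_1(G)$ dominating $\bbP^1_t$. (2) Induct on $n$: assume $X_1(G_{n-1})$ is irreducible, where $G_{n-1}$ is the subgraph generated by $P_1,\ldots,P_{n-1}$. Split into the two cases from the construction of $\Psi_n$ in~\eqref{eq:Psi}: if $n \in \calDbar$, then $\Psi_n$ is linear in $x_n$ (it is $f_t^{\kappa_n}(x_n) + f_t^{\lambda_n}(x_{j_n})$ viewed correctly — actually it expresses $x_n$ via a quadratic-tower condition $f_t(x_n) = -f_t^{k}(x_{j_n})$), so $X_1(G)$ is obtained from $X_1(G_{n-1})$ by a tower of the degree-two extensions of Lemma~\ref{lem:graph_morphisms}(B), and I need each such step to be a \emph{field} extension, not a split one. (3) If $n \in \calD$, then $P_n$ adds a new $N$-cycle disjoint from $G_{n-1}$; here I would mimic the proof of Proposition~\ref{prop:Y1Nn_irred}, using the polynomial $g_{N,r}$ and the irreducibility lemma preceding that proposition: the new generator's orbit is a new root-orbit of $\Phi_N$ (or $\Phi_{M,N}$) whose minimal polynomial over the function field of $X_1(G_{n-1})$ is exactly the relevant $g$-factor, which is irreducible by the degree count coming from Theorem~\ref{thm:bousch_gen}.

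**Main obstacle.** The delicate point is step (3) — and more precisely, ensuring that adjoining the data of the $n$-th generator to the function field $F$ of $X_1(G_{n-1})$ produces a \emph{field} extension of the expected degree rather than a reducible one. In the all-periodic, full-level case this is exactly the content of the irreducibility of $g_{N,r}$ over $F_{r-1}$, which was proved by a global degree count: $[L_N : \bbC(t)] = R!N^R$ forces equality in~\eqref{eq:degree}. For a general admissible $G$ whose cycles all have length $N$ but which is not full level (it may omit some preimage-trees, or include only $r < R(N)$ cycles with varying preperiod depth), I need the analogous count: the degree of the function field of $X_1(G)$ over $\bbC(t)$ should equal $\prod_{i} \deg_{x_i} \Psi_i$ (restricted appropriately), and this follows because $\Gal(L_{M,N}/\bbC(t)) \cong \Aut(G_{M,N})$ acts transitively on the set of embeddings of $G$ into $G_{M,N}$ matching the portrait data — which is precisely what Theorem~\ref{thm:bousch_gen} gives, since $\Aut(G_{M,N})$ consists of \emph{all} $f_t$-equivariant symmetries. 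Concretely: any two ways of realizing the marked configuration $\P$ inside the full tree of roots differ by an element of $\Aut(G_{M,N})$, hence by a Galois automorphism, so the corresponding points of $U_1(G)$ over the generic point of $\bbP^1_t$ all lie in one Galois orbit, forcing $X_1(G)$ to be irreducible. The bookkeeping — matching the combinatorial count of such embeddings with $\prod \deg \Psi_i$ and confirming the disjoint-orbit conditions~\eqref{eq:disjoint_orbits} are exactly what distinguishes genuine embeddings — is where the real work lies, but it is bookkeeping rather than a new idea.
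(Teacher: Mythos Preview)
Your proposal is correct, and the core engine is the same as the paper's: embed everything into the full-level graph $G_{M,N}$, use Bousch's identification $\Gal(L_{M,N}/\bbC(t)) \cong \Aut(G_{M,N})$, and exploit that this group is large enough to move any realization of $G$ inside $G_{M,N}$ to any other. Where you differ is in the packaging. The paper inducts not on the number of generators but on the number of vertices of preperiod greater than one: the base case is then exactly the purely periodic situation $X_1(N^{(r)})$ already handled by Proposition~\ref{prop:Y1Nn_irred}, and each inductive step removes a single leaf-pair $\pm P$ of maximal preperiod, reducing to showing that one explicit quadratic $h(x)=f_t(x)+f_t^k(x_1)$ is irreducible over $\bbC(X_1(H))$. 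That is verified by exhibiting a single element of $\Gal(L_{M,N}/\bbC(t))$ that swaps the two roots $\pm y$ of $h$ and fixes every other preperiodic point of portrait $(m,N)$ with $m\le M$; such a transposition exists precisely because $\Aut(G_{M,N})$ contains \emph{all} $f_t$-equivariant permutations.

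Your induction on $n$ with the $\calD/\calDbar$ split works too, but note that the $\calD$ step is not literally Proposition~\ref{prop:Y1Nn_irred} (since $P_n$ may have positive preperiod), which is presumably why you pivot in your ``main obstacle'' paragraph to the cleaner global statement: $\Aut(G_{M,N})$ acts transitively on embeddings $G\hookrightarrow G_{M,N}$ respecting portraits and the relations of Lemma~\ref{lem:graph_iso}, hence the generic fiber of $U_1(G)\to\bbA^1$ is a single Galois orbit. That argument is valid and arguably more conceptual than the paper's; the paper's leaf-by-leaf induction buys you a shorter write-up because each step is a single quadratic and the required Galois element is just a transposition, so no bookkeeping about counting embeddings is needed.
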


\begin{proof}
Let $N$ be the common length of all cycles appearing in $G$. We proceed by induction on the number of points of preperiod greater than one. If $G$ has no vertices of preperiod greater than one, then $G$ may be generated by points of period $N$, hence $X_1(G)$ is irreducible over $\bbC$ by Proposition~\ref{prop:Y1Nn_irred}.

Now let $P$ be a vertex of $G$ of maximal preperiod $M \ge 2$, let $H$ be the subgraph of $G$ obtained by removing $\pm P$, and let $\{P_1,\ldots,P_n\}$ be a minimal generating set for $H$. As in the proof of Lemma~\ref{lem:graph_morphisms}, rearranging the generators if necessary, there is an integer $k \ge 0$ such that $\f(P) = -\f^k(P_1)$. Again referring to the proof of Lemma~\ref{lem:graph_morphisms}, the components of $X_1(G)$ correspond to the extensions of $\bbC(X_1(H)) = \bbC(x_1,\ldots,x_n,t)$ determined by the factors of $h(x) = f_t(x) + f_t^k(x_1)$. Since $h(x)$ is quadratic, it suffices to show that the roots of $h(x)$ do not lie in $\bbC(X_1(H))$.

Since each vertex of $H$ has portrait $(m,N)$ for some $m \le M$, the function field $\bbC(X_1(H))$ and the roots $\pm y$ of $h(z)$ lie in the splitting field $L_{M,N}$ of $\Phi_{M,N}(x,t)$. Since $\pm y$ are portrait-$(M,N)$ points in $L_{M,N}$ with a common image under $f_t$, it follows from Theorem~\ref{thm:bousch_gen} that there is an automorphism $\sigma \in \Gal(L_{M,N}/\bbC(t))$ that transposes $\pm y$ and fixes all other points of portrait $(m,N)$ with $m \le M$. In particular, such $\sigma$ fixes $x_1,\ldots,x_n$ --- hence fixes the field $\bbC(X_1(H))$ --- and transposes $\pm y$. Therefore $\pm y \not \in \bbC(X_1(H))$, and we conclude that $X_1(G)$ is irreducible over $\bbC$.
\end{proof}

For later reference, we record a consequence of Proposition~\ref{prop:same_length} in a special case.

\begin{cor}\label{cor:gal_closure}
Let $G_{M,N}$ be the admissible graph defined in Remark~\ref{rem:auts}. Then $X_1(G_{M,N})$ is irreducible over $\bbC$ with function field $L_{M,N}$. Moreover, the map $X_1(G_{M,N}) \to \bbP^1$ is the Galois closure of the map $X_1((M,N)) \to \bbP^1$, where both maps are projections onto the $t$-coordinate, and its Galois group is $\Gal(L_{M,N}/\bbC(t)) \cong \Aut(G_{M,N})$ .
\end{cor}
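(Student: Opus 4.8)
The plan is to deduce everything from Proposition~\ref{prop:same_length} together with an explicit computation of the function field of $X_1(G_{M,N})$. Since every cycle of $G_{M,N}$ has length $N$, Proposition~\ref{prop:same_length} already shows that $X_1(G_{M,N})$ is irreducible over $\bbC$; the remaining work is to prove $\bbC(X_1(G_{M,N})) = L_{M,N}$ as extensions of $\bbC(t)$ and then to read off the Galois-theoretic statements. Throughout I would fix an algebraic closure $\Omega$ of $\bbC(t)$ and regard both $K_0 := \bbC(X_1(G_{M,N}))$ and $L_{M,N}$ as subfields of $\Omega$.

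For the key computation, consider the generic point of $X_1(G_{M,N})$, whose residue field is $K_0$ and whose coordinates $x_1,\dots,x_n \in K_0$ correspond to a minimal generating set $\{P_1,\dots,P_n\}$ of $G_{M,N}$. This point lies on the dense open $U_1(G_{M,N})$, and since $t$ is transcendental $f_t$ is not PCF, so Proposition~\ref{prop:right_graph} applies and shows that $\{x_1,\dots,x_n\}$ generates a subgraph of $G(f_t,K_0)$ that is genuinely \emph{isomorphic} (not merely nearly isomorphic) to $G_{M,N}$, via $P_i \mapsto x_i$. For the inclusion $K_0 \subseteq L_{M,N}$: each $x_i$ is then a preperiodic point of $f_t$ with portrait $(M_i,N)$ for some $M_i \le M$; because $L_{M,N}$ contains all roots of $\Phi_{M,N}$, and applying $f_t$ repeatedly (using $\deg_x \Phi_{m,N} = 2^{m-1}D(N)$ to see that the $f_t$-images of the portrait-$(M,N)$ points exhaust the portrait-$(m,N)$ points for each $m \le M$, and likewise the period-$N$ points) shows $L_{M,N}$ contains every such point, we get $x_i \in L_{M,N}$, hence $K_0 = \bbC(t)(x_1,\dots,x_n) \subseteq L_{M,N}$. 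For the reverse inclusion $L_{M,N} \subseteq K_0$: by Remark~\ref{rem:auts}, $G_{M,N}$ contains exactly $\deg_x \Phi_{M,N}$ vertices of portrait $(M,N)$, so the isomorphic copy of $G_{M,N}$ inside $G(f_t,K_0)$ contributes $\deg_x \Phi_{M,N}$ distinct elements of $K_0$ that are portrait-$(M,N)$ points for $f_t$; these lie among the roots of $\Phi_{M,N}$ in $\Omega$, of which there are exactly $\deg_x \Phi_{M,N}$, so \emph{all} roots of $\Phi_{M,N}$ lie in $K_0$ and $L_{M,N} \subseteq K_0$. Thus $K_0 = L_{M,N}$.

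To identify the cover, I would argue as follows. The plane curve $Y_1((M,N))$ is defined by the irreducible polynomial $\Phi_{M,N}(x,t)$ (Theorem~\ref{thm:bousch_gen}), so $\bbC(X_1((M,N))) = \bbC(t)(\alpha)$ for a root $\alpha$ of $\Phi_{M,N}$, and the Galois closure of this extension of $\bbC(t)$ is precisely the splitting field $L_{M,N}$. The admissible graph generated by a single portrait-$(M,N)$ point is a subgraph of $G_{M,N}$ whose dynamical modular curve is $X_1((M,N))$ itself (its defining ideal is just $\Phi_{M,N}(x_1,t) = 0$, with no disjointness conditions to impose), so Proposition~\ref{prop:graph_morphisms} yields a dominant morphism $X_1(G_{M,N}) \to X_1((M,N))$ commuting with the projections to $\bbP^1$. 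Combined with $K_0 = L_{M,N}$, this exhibits $X_1(G_{M,N}) \to \bbP^1$ as (the normalization of $\bbP^1$ in) the Galois closure of $X_1((M,N)) \to \bbP^1$; its Galois group is $\Gal(L_{M,N}/\bbC(t))$, which by Theorem~\ref{thm:bousch_gen} consists of all permutations of the roots of $\Phi_{M,N}$ commuting with $f_t$, and this group is isomorphic to $\Aut(G_{M,N})$ by Remark~\ref{rem:auts}.

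The step requiring the most care is the equality $K_0 = L_{M,N}$ — specifically the inclusion $L_{M,N} \subseteq K_0$, for which one must pin down the combinatorics of $G_{M,N}$ precisely enough (in particular that it carries exactly $\deg_x \Phi_{M,N}$ vertices of portrait $(M,N)$) to conclude that the copy of $G_{M,N}$ sitting over the function field exhausts the full root set of $\Phi_{M,N}$ rather than a proper subset. The rest is bookkeeping with the degree formulas for generalized dynatomic polynomials and the cited theorems of Bousch.
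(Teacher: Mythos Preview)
Your proof is correct and follows essentially the same approach as the paper. Both arguments deduce irreducibility from Proposition~\ref{prop:same_length}, then establish $\bbC(X_1(G_{M,N})) = L_{M,N}$ by the two-sided inclusion (generators lie in $L_{M,N}$ for one direction; maximality of the portrait-$(M,N)$ vertex count for the other), and finally read off the Galois closure and Galois group from Theorem~\ref{thm:bousch_gen} and Remark~\ref{rem:auts}. Your version simply unpacks in more detail what the paper states tersely---in particular, you justify $K_0 = L_{M,N}$ via the generic point and Proposition~\ref{prop:right_graph}, and you make explicit the degree count $\deg_x \Phi_{M,N} = 2^{M-1}D(N)$---but there is no substantive difference in strategy.
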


\begin{proof}
Irreducibility of $X_1(G_{M,N})$ follows immediately from Proposition~\ref{prop:same_length}. Since $G_{M,N}$ is generated by points of portrait $(M,N)$, we have $\bbC(X_1(G_{M,N})) \subseteq L_{M,N}$; on the other hand, since $G_{M,N}$ has the maximal number of vertices of portrait $(M,N)$, the same is true of $\bbC(X_1(G_{M,N}))$, so in fact $\bbC(X_1(G_{M,N})) = L_{M,N}$. The final assertion follows from the fact that $X_1((M,N))$ is defined by $\Phi_{M,N}(x,t)$ and $L_{M,N}$ is the splitting field of this polynomial over $\bbC(t)$, and the isomorphism $\Gal(L_{M,N}/\bbC(t)) \cong \Aut(G_{M,N})$ is addressed in Remark~\ref{rem:auts}.
\end{proof}

We now consider arbitrary admissible graphs, possibly with cycles of different lengths. We begin with a definition.

\begin{defn}\label{defn:normal}
An admissible graph $G$ is {\bf normal} if it may be written as a disjoint union
	\begin{equation}\label{eq:normal}
		G = \bigsqcup_{i=1}^n G_{M_i,N_i},
	\end{equation}
where the $N_i$ are distinct and $M_i \ge 0$ for each $i \in \{1,\ldots,n\}$. If $G$ is an arbitrary admissible graph, the {\bf normal closure} of $G$ is the minimal normal graph containing $G$. More concretely, if $N_1,\ldots,N_n$ are the cycles appearing in $G$, and if $M_i$ is the maximal preperiod of a vertex whose eventual period is $N_i$, then the normal closure of $G$ is $\bigsqcup_{i=1}^n G_{M_i,N_i}$.
\end{defn}

\begin{rem}
The terminology is motivated by the Galois theory of dynamical modular curves. Once we have proven irreducibility of dynamical modular curves, it is straightforward to verify that if $G'$ is the normal closure of $G$, then $\pi_{G'} : X_1(G') \to \bbP^1$ is the Galois closure of $\pi_G : X_1(G) \to \bbP^1$.
\end{rem}

Now, if $G$ is an arbitrary admissible graph, we have an embedding of $G$ into its normal closure $G'$, and this embedding induces a dominant morphism $X_1(G') \to X_1(G)$. Therefore, in order to prove irreducibility of $X_1(G)$ for arbitrary admissible graphs $G$, it suffices to restrict our attention to {\it normal} graphs.

Let $G$ be as in \eqref{eq:normal}. By Proposition~\ref{prop:fiber_product}, the curve $X_1(G)$ may be written as a fiber product
	\[
		X_1(G) \cong X_1(G_{M_1,N_1}) \times_{\bbP^1} \cdots \times_{\bbP^1} X_1(G_{M_n,N_n}).
	\]
By Proposition~\ref{prop:same_length}, each of the factors $X_1(G_{M_i,N_i})$ is irreducible over $\bbC$; however, the fiber product of irreducible curves need not be irreducible, as shown in Example~\ref{ex:3cycles}. We first state a sufficient condition for certain fiber products of irreducible curves to be irreducible, and we then prove that our curves $X_1(G_{M,N})$ satisfy this condition. The following is an elementary consequence of the Riemann-Hurwitz formula.

\begin{lem}\label{lem:fiber_product_branch}
Let $\calC_1$ and $\calC_2$ be smooth projective curves that are irreducible over $\bbC$, and for each $i \in \{1,2\}$ let $\pi_i : \calC_i \to \bbP^1$ be a finite morphism of degree $d_i \ge 2$. Let $\calB_i \subset \bbP^1(\bbC)$ be the branch locus for $\pi_i$, and suppose that $|\calB_1 \cap \calB_2| \le 1$. Then the fiber product $\calC := \calC_1 \times_{\bbP^1} \calC_2$, taken relative to the maps $\pi_i$, is irreducible over $\bbC$.

Moreover, let $K$, $K_1$, and $K_2$ denote the function fields of $\calC$, $\calC_1$, and $\calC_2$, respectively, and let $L$, $L_1$, and $L_2$ denote their respective Galois closures over $\bbC(t)$. Then
	\[
		\Gal(L/\bbC(t)) \cong \Gal(L_1/\bbC(t)) \times \Gal(L_2/\bbC(t)).
	\]
\end{lem}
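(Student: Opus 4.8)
The plan is to prove irreducibility of the fiber product via Galois theory, translating the condition on branch loci into a statement about ramification in function field extensions. Let me sketch the approach.

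=== BEGIN PROOF PROPOSAL ===

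\begin{proof}[Proof sketch]
The plan is to work with function fields and their Galois closures. Write $F = \bbC(t)$, and let $K_i = \bbC(\calC_i)$ be the function field of $\calC_i$, viewed as a finite extension of $F$ of degree $d_i$ via $\pi_i^*$; let $L_i$ be the Galois closure of $K_i/F$ inside a fixed algebraic closure $\Fbar$. The function field of the fiber product $\calC = \calC_1 \times_{\bbP^1} \calC_2$ is the composite ring $K_1 \otimes_F K_2$, and $\calC$ is irreducible if and only if this ring is a domain, equivalently if and only if $K_1$ and $K_2$ are linearly disjoint over $F$. Since $K_i \subseteq L_i$, it suffices to show that $L_1$ and $L_2$ are linearly disjoint over $F$, i.e. that $L_1 \cap L_2 = F$; this will simultaneously give the displayed isomorphism $\Gal(L/F) \cong \Gal(L_1/F) \times \Gal(L_2/F)$, where $L = L_1 L_2$ is the Galois closure of $K$ over $F$.

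First I would set $E := L_1 \cap L_2$, a finite Galois extension of $F = \bbC(t)$, and translate back to geometry: $E$ corresponds to a smooth projective curve $\calD$ with a finite Galois morphism $\calD \to \bbP^1$, and the branch locus of $\calD \to \bbP^1$ is contained in $\calB_1 \cap \calB_2$ — indeed, a place of $F$ that is unramified in $L_1$ (or in $L_2$) is unramified in the subfield $E$, so $E/F$ can only be ramified over points lying in both $\calB_1$ and $\calB_2$. By hypothesis $|\calB_1 \cap \calB_2| \le 1$, so $\calD \to \bbP^1$ is a Galois cover of $\bbP^1$ branched over at most one point. The key step is then the standard fact that $\bbP^1$ has no nontrivial connected covers branched over a single point (the fundamental group of $\bbP^1 \setminus \{pt\}$, or indeed of $\bbP^1$ itself, is trivial): concretely, a connected cover of $\bbP^1_\bbC$ branched over one point $P$ is an unramified connected cover of $\bbA^1_\bbC = \bbP^1 \setminus \{P\}$, and $\bbA^1_\bbC$ is simply connected, so the cover is trivial. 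Hence $\calD = \bbP^1$ and $E = F$, giving $L_1 \cap L_2 = F$ as desired.

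With $L_1 \cap L_2 = F$ established, linear disjointness of $L_1$ and $L_2$ over $F$ is automatic (this is exactly the statement that two Galois extensions of a field with trivial intersection are linearly disjoint), hence $K_1$ and $K_2$ are linearly disjoint over $F$, $K_1 \otimes_F K_2$ is a field, and $\calC$ is irreducible over $\bbC$. The identification of Galois groups then follows from the standard isomorphism $\Gal(L_1 L_2 / F) \xrightarrow{\sim} \Gal(L_1/F) \times \Gal(L_2/F)$, $\sigma \mapsto (\sigma|_{L_1}, \sigma|_{L_2})$, valid precisely when $L_1 \cap L_2 = F$; and one checks $L = L_1 L_2$ is the Galois closure of $K = K_1 K_2$ over $F$ since any automorphism of $\Fbar/F$ fixing $K_1 K_2$ fixes each $K_i$, hence fixes each $L_i$, hence fixes $L_1 L_2$.

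The main obstacle — really the only nontrivial input — is the Riemann–Hurwitz / topological fact that $\bbP^1_\bbC$ admits no nontrivial Galois cover branched over a single point. One can phrase it purely via Riemann–Hurwitz: if $\calD \to \bbP^1$ has degree $d \ge 2$ and is branched only over one point $P$ with ramification indices $e_1, \dots, e_r$ above $P$ (where $\sum e_i = d$), then $2g_\calD - 2 = -2d + \sum (e_i - 1) = -2d + (d - r) = -d - r < 0$, forcing $g_\calD = 0$, $d = 1$ or ($d=2$, $r=1$), but $r = 1$ with $d = 2$ violates $\sum e_i = d$ unless $e_1 = 2$, giving $-d - r = -3 < -2$, impossible; so $d = 1$. (For a genuinely Galois cover one does not even need this case analysis: all $e_i$ are equal to some common $e \mid d$ with $er = d$, and $-d - r < -2$ unless $d = 1$.) Everything else is formal Galois theory.
\end{proof}

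=== END PROOF PROPOSAL ===
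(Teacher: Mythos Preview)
Your proof is correct and follows essentially the same route as the paper: reduce to showing $L_1 \cap L_2 = \bbC(t)$, observe that any ramification in $L_1 \cap L_2$ must lie over $\calB_1 \cap \calB_2$, and invoke Riemann--Hurwitz to rule out a nontrivial intermediate extension branched over at most one point. The paper phrases the Riemann--Hurwitz step contrapositively (``a nontrivial extension has at least two branch points'') rather than via your explicit genus computation, but the content is identical.

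One small slip: your parenthetical justification that $L_1L_2$ is the Galois closure of $K_1K_2$ is not quite right---an automorphism fixing $K_i$ pointwise need not fix its Galois closure $L_i$ pointwise. The correct (and standard) argument is that $L_1L_2$ is Galois over $F$ and contains $K_1K_2$, while conversely the Galois closure of $K_1K_2$ must contain every $F$-conjugate of each $K_i$, hence contains each $L_i$. This does not affect the main argument.
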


\begin{proof}
By introducing a change of coordinates if necessary, we assume that if $\calB_1 \cap \calB_2$ is non-empty, then $\calB_1 \cap \calB_2 = \{\infty\}$. Hence $\calC_1$ and $\calC_2$ have no affine branch points in common.

To prove the lemma, it suffices to show that $L_1$ and $L_2$ --- hence also $K_1$ and $K_2$ --- are linearly disjoint over $\bbC(t)$. Suppose to the contrary that $L_1 \cap L_2 \supsetneq \bbC(t)$. By Riemann-Hurwitz, there are at least two places of $\bbC(t)$ that ramify in $L_1 \cap L_2$, hence at least one finite such place $\frakp$. This place $\frakp$ then ramifies in each of the extensions $L_1$ and $L_2$, thus ramifies in each of $K_1$ and $K_2$, contradicting our assumption that the maps $\pi_1$ and $\pi_2$ have no affine branch points in common. Therefore $L_1$ and $L_2$ must be linearly disjoint over $\bbC(t)$, completing the proof.
\end{proof}

\begin{lem}\label{lem:branch}
Fix $c \in \bbC$. There exists at most one $N \ge 1$ for which there exists $M \ge 0$ such that $\Phi_{M,N}(x,c) \in \bbC[x]$ has a multiple root.
\end{lem}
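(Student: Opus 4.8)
The plan is to convert the condition that $\Phi_{M,N}(x,c)$ has a multiple root into a statement about the non-repelling cycles of $f_c$, and then to invoke the classical theorem (Fatou) that a quadratic polynomial over $\bbC$ has at most one non-repelling cycle in $\bbC$.

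The first step is a reduction, via Lemma~\ref{lem:phiMNalt}, to the polynomials $\Phi_N(x,c)$ themselves, up to a post-critically finite exception. For $M \ge 1$ we have $\Phi_{M,N}(x,c) = \Phi_N(-f_c^{M-1}(x),c)$, so by the chain rule $\partial_x\Phi_{M,N}(x,c) = -\,\Phi_N'(-f_c^{M-1}(x),c)\cdot\prod_{j=0}^{M-2} 2 f_c^j(x)$, where $\Phi_N'$ denotes the derivative of $\Phi_N$ in its first variable. Hence a multiple root $\alpha$ of $\Phi_{M,N}(x,c)$ satisfies either (i) $-f_c^{M-1}(\alpha)$ is a multiple root of $\Phi_N(x,c)$, or (ii) $f_c^j(\alpha) = 0$ for some $0 \le j \le M-2$. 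Since $f_c^{M-1}\colon\bbC\to\bbC$ is surjective, alternative (i) is possible for some $\alpha$ exactly when $\Phi_N(x,c)$ itself has a multiple root (and the case $M=0$ is $\Phi_N$ directly). In alternative (ii), $\alpha$ is preperiodic (being a root of $f_c^{M+N}(x) - f_c^M(x)$) and its orbit contains $0$, so $f_c$ is PCF; writing $p := M-1-j \ge 1$ and applying Lemma~\ref{lem:phiMNalt} again, $-f_c^p(0)$ is a periodic point with $\Phi_N(-f_c^p(0),c) = 0$, and since $-f_c^p(0)$ lies on the cycle into which the orbit of $0$ eventually falls --- a cycle with multiplier $0$ (Gleason case) or of modulus $>1$ (Misiurewicz case), hence never a repeated root of $f_c^N(x) - x$ --- one gets that $N$ must equal the exact period $\nu(c)$ of that terminal cycle.

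I would then dispose of the PCF parameters $c$ first: alternative (i) cannot occur for such $c$, since a multiple root $\beta$ of $\Phi_N(x,c)$ is (as $\Phi_N(x,c) \mid f_c^N(x)-x$) a root of $f_c^N(x)-x$ of multiplicity $\ge 2$, forcing $(f_c^N)'(\beta) = 1$ and hence a cycle whose multiplier is a root of unity --- a parabolic cycle --- whereas a PCF quadratic has no parabolic cycle (otherwise $0$ would be attracted to it and have infinite orbit). So for PCF $c$ only alternative (ii) survives, pinning $N$ to $\nu(c)$. For non-PCF $c$ alternative (ii) cannot occur, so it remains to show $\Phi_N(x,c)$ has a multiple root for at most one $N$. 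Here the point is that a multiple root $\beta$ of $\Phi_N(x,c)$ has, by the computation just made, $(f_c^N)'(\beta) = 1$; if $n\mid N$ is the exact period of $\beta$ and $\lambda$ the multiplier of its cycle, then $\lambda^{N/n}=1$, so $\beta$ lies on a parabolic cycle. By Fatou's theorem every multiple root of every $\Phi_{N'}(x,c)$ lies on this single cycle; let $n$ be its period and $q$ the exact order of $\lambda$. Finally I would compute $\ord_\beta\Phi_N(x,c)$ from $\Phi_N = \prod_{d\mid N}(f_c^d(x)-x)^{\mu(N/d)}$: with $g := f_c^n$ and $N = nN'$, one has $\ord_\beta(f_c^{nd'}(x)-x) = \ord_\beta(g^{d'}(x)-x)$, which is $1$ when $q\nmid d'$ and a fixed constant $\tau+1\ge 2$ when $q\mid d'$ (the order of tangency of the parabolic germ $g^q$ to the identity at $\beta$, unchanged by further iteration of $g^q$). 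Möbius inversion then gives $\ord_\beta\Phi_N(x,c) = [N'=1] + \tau\,[N'=q]$, so $\beta$ can be a multiple root of $\Phi_N(x,c)$ only when $N = nq$; thus at most one $N$ works.

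The step I expect to be the main obstacle is the careful handling of post-critically finite parameters: one must verify both that alternative (ii) forces $N = \nu(c)$ and that alternative (i) is impossible for PCF $c$, each resting on an elementary but easily misstated fact about which non-repelling cycles a PCF quadratic can carry. A secondary delicate point is the local multiplicity computation at the parabolic periodic point, whose essential input is that iterating a parabolic germ does not raise its order of tangency with the identity.
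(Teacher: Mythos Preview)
Your proof is correct and takes a genuinely different route from the paper's.

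Both arguments begin identically: the chain-rule reduction via Lemma~\ref{lem:phiMNalt} splits into alternative (i), that $\Phi_N(x,c)$ itself has a multiple root, and alternative (ii), that the orbit of the multiple root passes through $0$, forcing $f_c$ to be PCF. The divergence is in how the two alternatives are handled and shown to be mutually exclusive.

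For alternative (i), the paper identifies the condition ``$\Phi_N(x,c)$ has a multiple root'' with $c$ being a root of a period-$N$ hyperbolic component of the Mandelbrot set, and then cites the Douady--Hubbard result that distinct hyperbolic components never share a root. You instead observe that such a multiple root forces a rationally indifferent cycle, invoke the classical Fatou fact that a quadratic polynomial has at most one finite such cycle, and then perform the local multiplicity computation (via M\"obius inversion and the invariance of parabolic tangency order under iteration) to pin down the unique $N$ at which that cycle produces a repeated factor. For the mutual exclusion of (i) and (ii), the paper uses an arithmetic argument --- roots of hyperbolic components lie in $\QQbar\setminus\ZZbar$ while PCF parameters lie in $\ZZbar$ --- whereas you argue dynamically that a PCF quadratic admits no parabolic cycle, since such a cycle would attract the critical orbit.

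Your approach is more self-contained within elementary complex dynamics, trading the Douady--Hubbard black box and the integrality facts for Fatou's theorem and a direct local computation; the paper's approach is shorter but relies on those external inputs. Two minor remarks: Fatou's classical result bounds attracting and parabolic cycles rather than all non-repelling ones, but that weaker statement is exactly what your argument uses; and in the Misiurewicz case you assert the terminal cycle has $|\lambda|>1$, which is true but deeper than needed --- it suffices that $\lambda$ is not a root of unity, and this already follows from the ``no parabolic cycle for PCF'' observation you make immediately afterward.
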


\begin{proof}
Suppose $\alpha \in \bbC$ is a multiple root of the polynomial $\Phi_{M,N}(x,c)$. If $M = 0$, then this simply means that $\alpha$ is a multiple root of $\Phi_N(x,c)$. On the other hand, if $M \ge 1$, then Lemma~\ref{lem:phiMNalt} allows us to write
	\[
		\Phi_{M,N}(x,c) = \Phi_N(-f_c^{M-1}(x),c).
	\]
Now let $y := -f_c^{M-1}(x)$ and $\beta := -f_c^{M-1}(\alpha)$. Applying the chain rule and using the fact that $\alpha$ is a multiple root of $\Phi_{M,N}(x,c)$, we have
	\[
		0 = \left.\frac{\partial\Phi_{M,N}(x,c)}{\partial x}\right|_{x=\alpha} = \left.\frac{\partial \Phi_N(y,c)}{\partial y}\right|_{y=\beta} \cdot \left(-\left(f_c^{M-1}\right)'(\alpha)\right) = - \left.\frac{\partial \Phi_N(y,c)}{\partial y}\right|_{y=\beta} \cdot 2^{M-1} \prod_{k=0}^{M-2} f_c^k(\alpha).
	\]
It follows that either $\Phi_N(y,c) \in \bbC[y]$ has a multiple root, or $f_c^k(\alpha) = 0$ for some $0 \le k \le M - 2$. In the former case, $c$ is a root of a period-$N$ hyperbolic component of the Mandelbrot set $\calM$; in the latter, $0$ is preperiodic for $f_c$ with eventual period ({\it a priori} dividing) $N$. These two cases are mutually exclusive, since roots of hyperbolic components of $\calM$ lie in $\QQbar \setminus \ZZbar$ (\cite[p. 582]{morton/vivaldi:1995}), while parameters $c$ such that $0$ is preperiodic for $f_c$ must lie in $\ZZbar$ (see the proof of \cite[Prop. 4.22]{silverman:2007}), where $\ZZbar$ denotes the ring of algebraic integers.

First, suppose $c$ is a root of a period-$N$ hyperbolic component of $\calM$. Then $c$ cannot also be the root of a period-$n$ component for any $n \ne N$, since no two hyperbolic components of $\calM$ share a common root (\cite[Prop. XIV.5]{douady/hubbard:1985}). Therefore, $\Phi_{m,n}(x,c)$ cannot have a multiple root for any $n \ne N$ and $m \ge 0$.

Now, suppose $0$ is preperiodic for $f_c$. Since $c$ cannot be a root of a hyperbolic component of $\calM$, it must be that $0$ has eventual period \emph{equal to} $N$ (not just \emph{dividing} $N$). Certainly $0$ cannot also have eventual period equal to $n$ for any $n \ne N$, so $\Phi_{m,n}(x,c)$ cannot have a multiple root for any $n \ne N$ and $m \ge 0$.
\end{proof}

\begin{cor}\label{cor:branch}
Let $N_1 \ne N_2$ be positive integers, and let $M_1,M_2 \ge 0$ be arbitrary. For each $i = 1,2$, let $\calB_i$ be the branch locus of the map $\pi_{G_{M_i,N_i}} : X_1(G_{M_i,N_i}) \to \bbP^1$. Then $\calB_1 \cap \calB_2 = \{\infty\}$.
\end{cor}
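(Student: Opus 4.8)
The plan is to pin down each $\calB_i$ in terms of the one-generator curves, handle the finite places via Lemma~\ref{lem:branch}, and then show separately that $t=\infty$ is always a branch point by a direct valuation computation. For the first step: by Corollary~\ref{cor:gal_closure}, the map $\pi_{G_{M_i,N_i}}\colon X_1(G_{M_i,N_i})\to\bbP^1$ is the Galois closure of the map $\pi_{(M_i,N_i)}\colon X_1((M_i,N_i))\to\bbP^1$ (both being projection onto $t$). Since a finite separable cover of smooth projective curves and its Galois closure have the same branch locus — a place of the base ramifies in the Galois closure if and only if it ramifies in the original extension — I get that $\calB_i$ equals the branch locus of $\pi_{(M_i,N_i)}$, i.e.\ of the normalization of the projective closure of $\{\Phi_{M_i,N_i}(x,t)=0\}$ projected to the $t$-line.

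For the finite places: the leading coefficient of $\Phi_{M,N}(x,t)$ in $x$ is a nonzero constant, so for $c\in\bbA^1(\bbC)$ the place $t=c$ is unramified in the function field $\bbC(t)[x]/(\Phi_{M,N}(x,t))$ of $X_1((M,N))$ precisely when the reduction $\Phi_{M,N}(x,c)\in\bbC[x]$ is separable. Hence every finite point of $\calB_i$ is a parameter $c$ for which $\Phi_{M_i,N_i}(x,c)$ has a multiple root. If some finite $c$ lay in $\calB_1\cap\calB_2$, then both $\Phi_{M_1,N_1}(x,c)$ and $\Phi_{M_2,N_2}(x,c)$ would have multiple roots, contradicting Lemma~\ref{lem:branch} since $N_1\ne N_2$. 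Thus $\calB_1\cap\calB_2\subseteq\{\infty\}$.

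It then remains to show $\infty\in\calB_{M,N}$ for every $M\ge0$, $N\ge1$, i.e.\ that $t=\infty$ ramifies in $\bbC(t)[x]/(\Phi_{M,N}(x,t))$. Put $s:=1/t$, pass to an algebraic closure of the completion $\bbC((s))$ equipped with its valuation $\ord_s$ (so $\ord_s(1/s)=-1$), and let $\alpha$ be a root of $\Phi_{M,N}(x,t)$, so that $f_{1/s}^{M+N}(\alpha)=f_{1/s}^{M}(\alpha)$. I claim $\ord_s(\alpha)\notin\bbZ$; this forces $\alpha\notin\bbC((s))$, and since $\bbC((s))$ is complete with algebraically closed residue field the resulting finite extension is ramified, giving $\infty\in\calB_{M,N}$. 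To prove the claim, suppose $\ord_s(\alpha)\in\bbZ$ and set $u_k:=\ord_s(f_{1/s}^{k}(\alpha))$. Whenever $u_k\in\bbZ$, the integer $2u_k$ is even while $-1$ is odd, so $u_{k+1}=\ord_s\!\bigl((f_{1/s}^{k}\alpha)^2+1/s\bigr)=\min(2u_k,-1)\in\bbZ$, and by induction all $u_k$ lie in $\bbZ$. From this recursion, $u_k\ge0$ gives $u_{k+1}=-1$, and $u_k\le-1$ gives $u_{k+1}=2u_k<u_k$; hence $u_1\le-1$ and $(u_k)_{k\ge0}$ is strictly decreasing, contradicting $u_{M+N}=u_M$. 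This establishes the claim, so $\infty\in\calB_1\cap\calB_2$.

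Combining the last two paragraphs yields $\calB_1\cap\calB_2=\{\infty\}$. I expect the only delicate point to be the last step — exhibiting $\infty$ as a genuine branch point rather than merely bounding the intersection. The crucial input there is that the parity of $2u_k$ against $-1$ makes $\ord_s\!\bigl((f_{1/s}^{k}\alpha)^2+1/s\bigr)$ an exact equality rather than just an inequality, which makes the valuations along the orbit of $\alpha$ diverge and thereby rules out the eventual periodicity that a preperiodic point of $f_{1/s}$ must exhibit.
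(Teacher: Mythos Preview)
Your argument is correct and follows the same overall structure as the paper's proof: reduce to the branch locus of $X_1((M_i,N_i))\to\bbP^1$ via Corollary~\ref{cor:gal_closure}, then handle the finite places using Lemma~\ref{lem:branch}. The only difference is at $t=\infty$. The paper simply cites \cite[Prop.~10]{morton:1996} for the fact that $\infty$ lies in each branch locus, whereas you supply a self-contained valuation argument in $\bbC((s))$ showing that no root of $\Phi_{M,N}(x,1/s)$ can have integral $s$-adic valuation (since the orbit valuations $u_k$ would then be strictly decreasing, contradicting preperiodicity). Your approach has the advantage of being elementary and not relying on an external reference; the paper's citation is terser but less transparent.
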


\begin{proof}
By Corollary~\ref{cor:gal_closure}, the map $\pi_{G_{M_i,N_i}}$ is the Galois closure of the map
	\begin{align*}
		\pi_i : X_1((M_i,N_i)) &\to \bbP^1\\
			(x,c) &\mapsto c,
	\end{align*}
so it suffices to prove the statement with $\pi_{G_{M_i,N_i}}$ replaced by $\pi_i$.

That each branch locus contains $\infty$ follows from \cite[Prop. 10]{morton:1996}. If $\pi_i$ is ramified over $c \in \bbA^1(\bbC)$, then the polynomial $\Phi_{M_i,N_i}(x,c) \in \bbC[x]$ has a multiple root. Since $N_1 \ne N_2$, the polynomials $\Phi_{M_1,N_1}(x,c)$ and $\Phi_{M_2,N_2}(x,c)$ cannot both have multiple roots by Lemma~\ref{lem:branch}. Therefore, $\calB_1$ and $\calB_2$ share no affine points.
\end{proof}

We may now prove Theorem~\ref{thm:main_irred}.

\begin{proof}[Proof of Theorem~\ref{thm:main_irred}]
Let $G$ be an admissible graph. As mentioned just below Definition~\ref{defn:normal}, it suffices to replace $G$ with its normal closure, so we assume $G$ is normal and write
	\[
	G = \bigsqcup_{i=1}^n G_{M_i,N_i},
	\]
where the $M_i$ are arbitrary nonnegative integers and the $N_i$ are distinct positive integers. By Proposition~\ref{prop:fiber_product}, we can write
	\begin{equation}\label{eq:final_product}
	X_1(G) \cong X_1(G_{M_1,N_1}) \times_{\bbP^1} \cdots \times_{\bbP^1} X_1(G_{M_n,N_n}),
	\end{equation}
where the fiber product is taken relative to the projection maps $\pi_{G_{M_i,N_i}}$. The factors appearing in \eqref{eq:final_product} are irreducible (by Proposition~\ref{prop:same_length}) and have pairwise disjoint affine branch loci (by Corollary~\ref{cor:branch}). It then follows from Lemma~\ref{lem:fiber_product_branch} that $X_1(G)$ is irreducible over $\bbC$, hence over any field of characteristic zero by the Lefschetz principle.
\end{proof}

\section{Realizing admissible graphs over number fields}\label{sec:realize}

In this section, we give an application of Theorem~\ref{thm:main_irred}. Corollary~\ref{cor:admissible} says that if $K$ is a number field, then for all but finitely many $c \in K$, the preperiodic graph $G(f_c,K)$ is admissible. We now prove a converse to this statement.

\begin{thm}\label{thm:realize}
Let $G$ be a strongly admissible graph. Then there exists a number field $K$ and $c \in K$ such that $G(f_c,K) \cong G$.
\end{thm}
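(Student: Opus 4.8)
The plan is to combine Theorem~\ref{thm:main_irred} with Hilbert's irreducibility theorem, using the good‑reduction theory of arithmetic dynamics to keep the bookkeeping finite. Write $\{P_1,\dots,P_n\}$ for a minimal generating set of $G$, let $\pi_G\colon X_1(G)\to\bbP^1$ be the projection onto the $t$–coordinate, and set $d:=\deg\pi_G$ (for $G=\emptyset$ this is $1$, and the argument below applies verbatim). The guiding observation is that the conclusion already holds at the generic point: over $F:=\bbQ(X_1(G))$ the tautological preperiodic points $\alpha_1,\dots,\alpha_n\in F$ together with $c\in F$ generate a subgraph of $G(f_c,F)$ which is isomorphic to $G$ by Proposition~\ref{prop:right_graph} (note $0$ is not preperiodic for $f_c$ over $F$, since its iterates are distinct nonconstant functions of $c$, so $f_c$ is not PCF over $F$), and in fact $G(f_c,F)\cong G$ \emph{exactly}: any additional preperiodic point, after replacing it by a suitable iterate, would adjoin to the generating set to give an admissible $G'\supsetneq G$ together with, via Proposition~\ref{prop:right_graph}, an $F$–point of $X_1(G')$ lying over the generic point of $X_1(G)$; but $X_1(G')$ is irreducible by Theorem~\ref{thm:main_irred} and $X_1(G')\to X_1(G)$ has degree $\ge 2$ by Proposition~\ref{prop:graph_morphisms} (strong admissibility of $G$ excludes the ``second fixed point'' exception), so its generic fibre is the spectrum of a proper extension of $F$ and has no $F$–point. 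It remains to descend this to a number field.

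The descent is by specialization, and the one real difficulty is that a priori $G(f_c,K)$ could exceed $G$ in infinitely many ways --- for instance by acquiring a new rational cycle of arbitrarily large length --- so one cannot simply apply Hilbert irreducibility to a finite list of over‑graphs. To cut this down I would use the (external) good‑reduction bounds of Morton--Silverman (together with the bounds of Pezda/Narkiewicz for the preperiodic part): there is a constant $B=B(d)$ such that whenever $c\in\bbZ$ (so $f_c$ has good reduction at every prime) and $[K:\bbQ]=d$, every vertex of $G(f_c,K)$ has portrait $(m,N)$ with $m,N\le B$; one arranges that $B$ is at least the largest cycle length and preperiod occurring in $G$, so that $G$ itself is not excluded. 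With this bound, the set $\mathcal A$ of admissible graphs obtained from $G$ by a single atomic step --- appending two preimages to an initial point of $G$, or adjoining a new connected component whose cycle has length $\le B$ --- is \emph{finite}, and it suffices to produce a specialization that avoids all of them.

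Now apply Hilbert's irreducibility theorem to the finitely many dominant maps $X_1(G')\to\bbA^1$ with $G'\in\mathcal A\cup\{G\}$, each having $X_1(G')$ irreducible over $\bbQ$ by Theorem~\ref{thm:main_irred}: there are infinitely many $t_0\in\bbZ$ for which all of these fibres are irreducible over $\bbQ$; after discarding the finitely many $t_0$ with $t_0=1/4$, with $f_{t_0}$ PCF, or lying under the finitely many bad points of $U_1(G)$ from Corollary~\ref{cor:finite_ramified}, fix one such $t_0$. Let $P$ be the point of $\pi_G^{-1}(t_0)$, let $K$ be its residue field (so $[K:\bbQ]=d$), put $c:=t_0\in\bbZ$, and $\alpha_i:=\alpha_i(P)\in K$. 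Then $\{\alpha_1,\dots,\alpha_n\}$ generates a subgraph of $G(f_c,K)$ isomorphic to $G$, so $G(f_c,K)\supseteq G$ and is admissible. If the inclusion were strict, then (since $c\in\bbZ$, $[K:\bbQ]=d$) all portraits in $G(f_c,K)$ would be bounded by $B$, and a routine check produces $G'\in\mathcal A$ with $G\subsetneq G'\subseteq G(f_c,K)$; the $K$–rational preperiodic points realizing $G'$ would give a $K$–point of $X_1(G')$ lying over $P$, hence a $K$–rational point of the fibre $\pi_{G'}^{-1}(t_0)$, which is the spectrum of a field of degree $d\cdot\deg\!\big(X_1(G')\to X_1(G)\big)\ge 2d>d$ over $\bbQ$ --- impossible. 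Hence $G(f_c,K)=G$.

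I expect the passage through the good‑reduction bounds --- the step that renders the family of potential over‑graphs finite --- to be the main obstacle; everything else is a fairly mechanical assembly of Theorem~\ref{thm:main_irred}, Proposition~\ref{prop:graph_morphisms}, Proposition~\ref{prop:right_graph}, Corollary~\ref{cor:finite_ramified}, and standard Hilbert irreducibility. A secondary point requiring care is the routine verification that any $G(f_c,K)\supsetneq G$ contains one of the atomic extensions in $\mathcal A$ (trace back a vertex of $G(f_c,K)$ not in $G$ to either an extra preimage of an initial point of $G$ or a minimal new component), and the bookkeeping needed to guarantee a single choice of auxiliary primes works simultaneously for the ``$G$ is allowed'' and ``everything larger is controlled'' requirements.
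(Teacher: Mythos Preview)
Your proposal is correct and follows essentially the same approach as the paper: pick $c\in\bbZ$ via Hilbert irreducibility so that the fibers over $c$ of the finitely many ``one-step-larger'' dynamical modular curves are inert, take $K$ to be the residue field of a point of $\pi_G^{-1}(c)$, and use the Morton--Silverman good-reduction bound to control periods and hence make the list of over-graphs finite. The paper's Lemma~\ref{lem:red} is precisely the Morton--Silverman input you invoke.

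One simplification worth noting: you do not need any bound on \emph{preperiods}, so the Pezda/Narkiewicz reference is superfluous. The paper's list of atomic over-graphs is (i) appending two preimages to an initial point of $G$ (finitely many, independent of any bound), and (ii) adding a single new point of period $N\le B(G)$ together with the subgraph it generates (an $N$-cycle plus its preperiod-one preimages). If $G(f_c,K)\supsetneq G$, then either some new vertex has its orbit entering $G$, in which case tracing back to the first entry exhibits an extra preimage of an initial point of $G$; or there is an entirely new connected component, whose cycle (and preperiod-one preimages) already realize a graph of type (ii). In neither case does one need to know anything about preperiods in $G(f_c,K)$. Your phrase ``adjoining a new connected component whose cycle has length $\le B$'' is correct once interpreted as in (ii), but the surrounding discussion of portrait bounds suggests you were imagining larger components, which is unnecessary.
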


Before we prove Theorem~\ref{thm:realize}, we give examples to show that its conclusion may fail if $G$ is {\it nearly} or {\it weakly} admissible, thus demonstrating that the strong admissibility condition is necessary.

\subsection{Nearly and weakly admissible graphs}

\begin{figure}
\centering
%    \begin{tikzpicture}[scale=.65]
%\tikzset{vertex/.style = {}}
%\tikzset{edge/.style={decoration={markings,mark=at position 1 with %
%    {\arrow[scale=1.5,>=stealth]{>}}},postaction={decorate}}}
%\tikzset{loop/.style={min distance=15mm, in=45, out=-45, decoration={markings,mark=at position 0.99 with %
%    {\arrow[scale=1.5,>=stealth]{>}}}, postaction={decorate}}}
%% vertices
%\node[vertex] (0) at  (6, 4) {$\bullet$};
%%
%\node[vertex] (1) at  (6, 2) {$\bullet$};
%%
%\node[vertex] (11) at  (4, 2) {$\bullet$};
%%
%\node[vertex] (21a) at  (2, 3) {$\bullet$};
%\node[vertex] (21b) at  (2, 1) {$\bullet$};
%%
%\node[vertex] (31a) at  (0, 2.5) {$\bullet$};
%\node[vertex] (31b) at  (0, 3.5) {$\bullet$};
%%edges
%\path (0) edge [loop right] node {} (0);
%\path (1) edge [loop right] node {} (1);
%\draw[edge] (11) to (1);
%\draw[edge] (21a) to (11);
%\draw[edge] (21b) to (11);
%\draw[edge] (31a) to (21a);
%\draw[edge] (31b) to (21a);
%\end{tikzpicture}
	\includegraphics{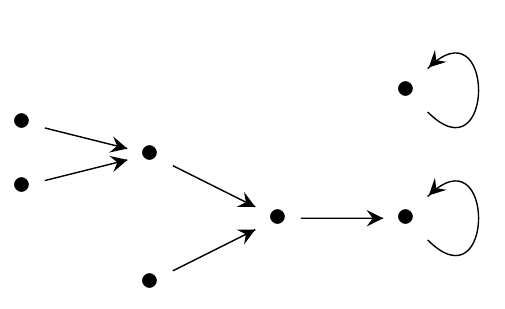}
\caption{A nearly admissible graph $G_1$}
\label{fig:ramified}
\end{figure}

We give one example each of a {\it nearly admissible} and {\it weakly admissible} graph that does not arise as $G(f_c,K)$ for any number field $K$ and $c \in K$.

First, let $G_1$ be the nearly admissible graph appearing in Figure~\ref{fig:ramified}. We claim that there is no number field $K$ with an element $c \in K$ for which $G(f_c,K) \cong G_1$. Suppose that $G(f_c,K)$ has a subgraph isomorphic to $G_1$. Then the critical point $0$ for $f_c$ is fixed, forcing $c = 0$. The second fixed point of $f_0(z) = z^2$ is $1$, and for each $M \ge 1$ the points of portrait $(M,1)$ are precisely the primitive $2^{M}$th roots of unity. Since $G(f_0,K)$ has a vertex of portrait $(3,1)$, $K$ must contain a primitive eighth root of unity, hence $K$ necessarily contains all four primitive eighth roots of unity. It follows that $G(f_0,K)$ has \textit{four} points of portrait $(3,1)$, not just the two appearing in $G_1$, hence $G(f_c,K)$ must {\it properly} contain $G_1$.

\begin{figure}
\centering
	\includegraphics{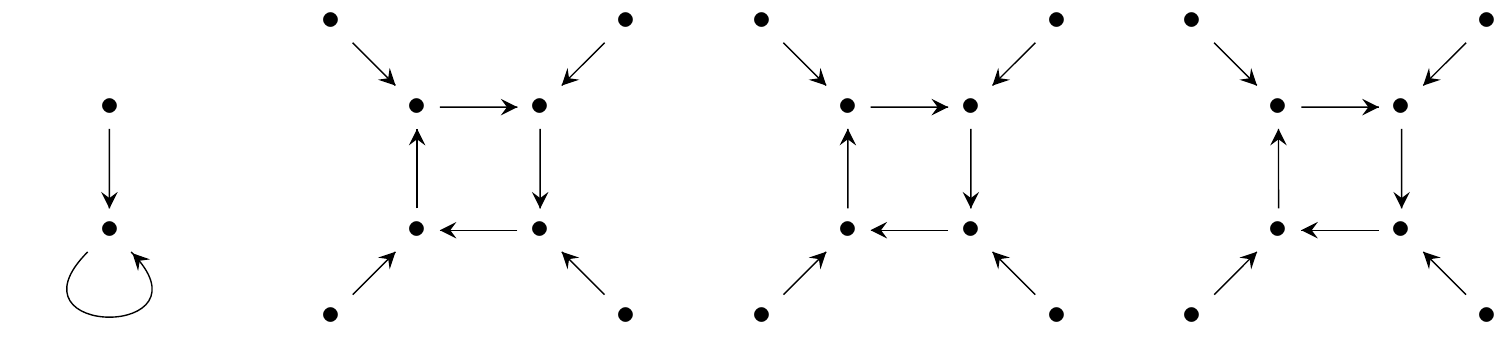}
\caption{A weakly admissible graph $G_2$}
\label{fig:weak}
\end{figure}

Now let $G_2$ be the weakly admissible graph appearing in Figure~\ref{fig:weak} --- it is the graph generated by a single fixed point and three points of period four in distinct orbits. We now claim that $G_2$ is never realized as $G(f_c,K)$ for any number field $K$ and $c \in K$. Again, we suppose that $G(f_c,K)$ has a subgraph isomorphic to $G_2$. If $f_c$ has two fixed points, then certainly $G(f_c,K) \not\cong G_2$, so we assume $f_c$ has only one fixed point; i.e., we take $c = 1/4$. The graph $G_2$ contains $D(4) = 12$ points of period four, so $K$ must contain the splitting field $L$ of $\Phi_4(x,1/4) \in \bbQ[x]$. One can verify using a computer algebra system (we have done so in Magma) that $L$ contains $\sqrt{-3/4}$, which has portrait $(2,1)$ for $f_{1/4}$. Therefore, if $K \supseteq L$, the graph $G(f_c,K)$ must have a point of portrait $(2,1)$, hence properly contains $G_2$.

\subsection{Proof of Theorem~\ref{thm:realize}}

Our proof of Theorem~\ref{thm:realize} uses a result due to Morton and Silverman \cite{morton/silverman:1994} that bounds\footnote{Stronger bounds were subsequently obtained by Benedetto \cite{benedetto:2007}; for our purposes, however, any bound is sufficient.} the periods of points under a rational map $\phi$ in terms of the primes of good reduction for $\phi$. A rational map $\phi(z) \in K(z)$ has \emph{good reduction} at the prime $\frakp$ if the reduction of $\phi$ mod $\frakp$ has the same degree as $\phi$. We direct the reader to \cite[p. 58]{silverman:2007} for a formal definition, but we note that the map $f_c$ has good reduction at $\frakp$ if and only if $c$ has nonnegative $\frakp$-adic valuation. In particular, $f_c \in \QQbar[z]$ has everywhere good reduction if and only if $c \in \ZZbar$.

\begin{lem}[{\cite[Cor. B]{morton/silverman:1994}}]\label{lem:red}
Let $K$ be a number field, let $\phi(z) \in K(z)$ have degree at least two, and let $\frakp$ and $\frakq$ be primes in $K$ of good reduction for $\phi$ with different residue characteristics. Suppose $\alpha \in K$ has period $N$. Then
	\[
		N \le (N_{K/\bbQ}\frakp^2 - 1)(N_{K/\bbQ}\frakq^2 - 1),
	\]
where $N_{K/\bbQ}$ denotes the ideal norm on the ring of integers $\OK$.
\end{lem}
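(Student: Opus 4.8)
The plan is to deduce the bound from the behaviour of periodic points under reduction modulo primes of good reduction. Fix a prime $\frakp$ of good reduction for $\phi$, with residue field $k_\frakp$ of cardinality $q_\frakp := N_{K/\bbQ}\frakp$ and residue characteristic $p$. Since $\phi$ has good reduction at $\frakp$, reduction modulo $\frakp$ commutes with $\phi$ on $\bbP^1$, so the $N$-cycle of $\alpha$ reduces to a cycle of $\bar\alpha$ of some length $m_\frakp$ dividing $N$.

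First I would reduce to the fixed-point case: replacing $\phi$ by $\psi := \phi^{m_\frakp}$, the point $\bar\alpha$ becomes a fixed point of $\bar\psi$, while $\alpha$ has exact period $N/m_\frakp$ under $\psi$. The essential input is the local analysis of this configuration due to Morton and Silverman (see also {\cite[Thm. 2.21]{silverman:2007}}): letting $\bar\lambda \in k_\frakp$ be the multiplier $\bar\psi'(\bar\alpha)$ and $r_\frakp$ its order in $k_\frakp^\times$ (with the convention $r_\frakp = 1$ when $\bar\lambda = 0$), one has $N/m_\frakp \in \{1,\ r_\frakp,\ r_\frakp\, p^{e}\}$ for some $e \ge 1$. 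The point is that $r_\frakp$ is prime to $p$ — it divides $q_\frakp - 1$ — and $p^e$ is a pure $p$-power, so the \emph{prime-to-$p$ part} of $N$ divides $m_\frakp \cdot r_\frakp$. Since a cycle in $\bbP^1(k_\frakp)$ has length at most $\#\bbP^1(k_\frakp) = q_\frakp + 1$ and $r_\frakp \le q_\frakp - 1$, the prime-to-$p$ part of $N$ is at most $(q_\frakp+1)(q_\frakp-1) = q_\frakp^2 - 1$.

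Applying the same argument at $\frakq$, whose residue characteristic $\ell$ differs from $p$, shows that the prime-to-$\ell$ part of $N$ is at most $q_\frakq^2 - 1$. I would then finish with elementary bookkeeping: write $N = p^a \ell^b N_1$ with $\gcd(N_1, p\ell) = 1$, so that $\ell^b N_1$ is the prime-to-$p$ part of $N$ and $p^a N_1$ is the prime-to-$\ell$ part. Then
\[
    N \le N\,N_1 = (p^a N_1)(\ell^b N_1) \le (q_\frakp^2 - 1)(q_\frakq^2 - 1) = (N_{K/\bbQ}\frakp^2 - 1)(N_{K/\bbQ}\frakq^2 - 1),
\]
which is the claimed inequality.

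The routine parts are the fact that reduction commutes with $\phi$ under good reduction and the final combinatorial step. The genuine obstacle — and the heart of the Morton–Silverman argument — is the multiplier trichotomy used in the second paragraph: showing that in passing from the reduced period $m_\frakp$ to the true period $N$, the period can only be multiplied by the order of the reduced multiplier and then by a power of the residue characteristic. This is proved by a $\frakp$-adic computation with the iterates $\psi^n$ in a formal neighbourhood of the lifted fixed point, tracking the leading coefficients of $\psi^n(z) - z$, and it is here that integrality of the coefficients of $\psi$ (i.e.\ good reduction) enters essentially.
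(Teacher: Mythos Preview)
The paper does not give its own proof of this lemma; it simply cites it as \cite[Cor.~B]{morton/silverman:1994} and uses it as a black box in the proof of Theorem~\ref{thm:realize}. So there is nothing in the paper to compare your argument against.

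That said, your sketch is correct and is precisely the Morton--Silverman argument (as presented, for instance, in \cite[Thm.~2.21]{silverman:2007} together with the deduction of the corollary). The key trichotomy $N/m_\frakp \in \{1,\ r_\frakp,\ r_\frakp p^e\}$ is exactly what their local analysis yields, your bounds $m_\frakp \le q_\frakp + 1$ and $r_\frakp \mid q_\frakp - 1$ are the right ones, and the final bookkeeping with $N = p^a\ell^b N_1$ is clean. One small point worth making explicit if you write this up: in the case $\bar\lambda = 0$ your convention $r_\frakp = 1$ is justified because a point whose reduction is a superattracting fixed point of $\bar\psi$ must itself have period $1$ under $\psi$ (this follows from the same $\frakp$-adic analysis, or a Hensel-type argument), so indeed $N/m_\frakp = 1$ in that case.
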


\begin{proof}[Proof of Theorem~\ref{thm:realize}]
Let $G$ be a strongly admissible graph. Define positive integers
	\begin{align*}
	D(G) &:= \deg \pi_G;\\
	B(G) &:= (2^{2D(G)} - 1)(3^{2D(G)} - 1);
	\end{align*}
where $\pi_G : X_1(G) \to \bbP^1$ is the projection onto the $t$-coordinate. Let $\{G_1,G_2,\ldots,G_n\}$ be the set of all admissible graphs which may be obtained from $G$ either by appending two preimages to an initial point of $G$, or by adding a new point of period at most $B(G)$ (and therefore also the subgraph generated by that point). Note that if $\tilde{G}$ is an admissible graph that properly contains $G$, then either $G_i \subseteq \tilde{G}$ for some $i = 1,\ldots,n$ or $\tilde{G}$ has a point of period greater than $B(G)$. For each $i \in \{1,\ldots,n\}$, denote by $D(G_i)$ the degree of $\pi_{G_i} : X_1(G_i) \to \bbP^1$.

Since the curves $X_1(G_i)$ are irreducible over $\bbQ$ (in fact, over $\bbC$) by Theorem~\ref{thm:main_irred}, it follows from Hilbert irreducibility that there exists a thin set $\calT(G) \subseteq \bbQ$ with the property that for all $c \in \bbQ \setminus \calT(G)$, the fibers of $\pi_{G_i}$ are inert over $c$ for each $i = 1,\ldots,n$. In other words, for all $c \in \bbQ \setminus \calT(G)$, the geometric preimages of $c$ under $\pi_{G_i}$ have degree $D(G_i)$ over $\bbQ$. Since there are infinitely many integers in the complement $\bbQ \setminus \calT$ for any thin set $\calT \subset \bbQ$ (see \cite[p. 26]{serre:2008}), we may fix an element $c \in \bbZ \setminus \calT(G)$ with $c \not \in \{-2,-1,0\}$; moreover, since $X_1(G)(\QQbar) \setminus U_1(G)(\QQbar)$ is a finite set, we may assume further that $\pi_G^{-1}(c) \subset U_1(G)(\QQbar)$.

Let $K$ be the field of definition of any preimage of $c$ under $\pi_G$. Then $[K:\bbQ] \le \deg \pi_G = D(G)$ and $G(f_c,K) \supseteq G$. It remains to show that $G(f_c,K)$ does not \emph{properly} contain $G$; it suffices, therefore, to show that $G(f_c,K)$ does not contain $G_i$ for any $i = 1,\ldots,n$ and that $f_c$ does not admit a $K$-rational point of period greater than $B(G)$.

Since $c \not \in \calT(G)$, the field of definition of any preimage of $c$ under $\pi_{G_i}$ has degree $D(G_i)$, which is strictly greater than $D(G)$ by Proposition~\ref{prop:graph_morphisms} and the fact that $G$ is strongly admissible. In particular, this implies that $c$ has no $K$-rational preimages under $\pi_{G_i}$; hence $G(f_c,K) \not \supseteq G_i$ for each $i = 1,\ldots,n$.

Since $c \in \bbZ$, the map $f_c$ has everywhere good reduction, so in particular $f_c$ has good reduction at any primes $\frakp_2$ and $\frakp_3$ in $\OK$ lying over $2$ and $3$, respectively. It then follows from Lemma~\ref{lem:red} that if $f_c$ has a $K$-rational point of period $N$, then
	\[
	N \le (N_{K/\bbQ} \frakp_2^2 - 1)(N_{K/\bbQ}\frakp_3^2 - 1) \le \left(\left(2^{D(G)}\right)^2 - 1\right)\left(\left(3^{D(G)}\right)^2 - 1\right) = B(G).
	\]
Thus $f_c$ has no $K$-rational points of period greater than $B(G)$. We conclude that $G(f_c,K) \cong G$.
\end{proof}

\section{A Galois representation associated to preperiodic points}\label{sec:galois}

We now return to the Galois theory associated to preperiodic points for quadratic polynomial maps. We begin by describing the Galois group of $L/\bbC(t)$, where $L$ is the extension of $\bbC(t)$ generated by all preperiodic points for $f_t$. Because $f_t$ is not PCF, and because $f_t$ has exactly $D(N)$ points of period $N$ for all $N \in \bbN$, the graph $G(f_t,\overline{\bbC(t)}) = G(f_t,L)$ is isomorphic to the direct limit
	\begin{equation}\label{eq:limit}
		\G := \varinjlim_{\substack{\text{admissible}\\\text{graphs } G}} G,
	\end{equation}
where the directed system of admissible graphs is ordered by inclusion. We therefore have a natural embedding
	\[
		\rho : \Gal(L/\bbC(t)) \hookrightarrow \Aut(\G)
	\]
obtained by restricting the action of $\sigma \in \Gal(L/\bbC(t))$ to the graph $G(f_t,L) \cong \G$. We will show that $\rho$ is actually an isomorphism. In order to do so, we require a lemma regarding Galois groups associated to those graphs which are normal in the sense of Definition~\ref{defn:normal}. For an admissible graph $G$, we will denote by $K_G/\bbC(t)$ the function field $\bbC(X_1(G))$.

\begin{lem}\label{lem:galois}
If $G$ is a normal graph, then $\Gal(K_G/\bbC(t)) \cong \Aut(G)$.
\end{lem}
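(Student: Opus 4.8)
The plan is to prove Lemma~\ref{lem:galois} by reducing it, via the fiber product decomposition from Proposition~\ref{prop:fiber_product}, to the building blocks $G_{M_i,N_i}$, for which the statement is already known. Write $G = \bigsqcup_{i=1}^n G_{M_i,N_i}$ with the $N_i$ distinct, as in Definition~\ref{defn:normal}. By Corollary~\ref{cor:gal_closure}, each factor satisfies $K_{G_{M_i,N_i}} = L_{M_i,N_i}$, which is \emph{already Galois} over $\bbC(t)$ with group $\Gal(L_{M_i,N_i}/\bbC(t)) \cong \Aut(G_{M_i,N_i})$; in particular $K_G$ is the compositum of these fields inside a fixed algebraic closure of $\bbC(t)$. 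The first key step is to show these fields are linearly disjoint over $\bbC(t)$: since the $N_i$ are pairwise distinct, Corollary~\ref{cor:branch} tells us that the affine branch loci of the maps $\pi_{G_{M_i,N_i}}$ are pairwise disjoint, so the repeated application of the argument in Lemma~\ref{lem:fiber_product_branch} gives both that $K_G$ (the function field of the fiber product, by Proposition~\ref{prop:fiber_product}) is Galois over $\bbC(t)$ and that
\[
	\Gal(K_G/\bbC(t)) \cong \prod_{i=1}^n \Gal(L_{M_i,N_i}/\bbC(t)) \cong \prod_{i=1}^n \Aut(G_{M_i,N_i}).
\]

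The second key step is the purely combinatorial identification $\Aut(G) \cong \prod_{i=1}^n \Aut(G_{M_i,N_i})$. This is where one must use that $G$ is normal: the connected components of $G$ fall into the groups $G_{M_i,N_i}$ according to their cycle length $N_i$, and since the $N_i$ are distinct, any graph automorphism must preserve each $G_{M_i,N_i}$ setwise (an automorphism preserves cycle lengths and the full preperiod-$M_i$ structure over an $N_i$-cycle). Hence $\Aut(G)$ decomposes as the direct product of the automorphism groups of the pieces, and the isomorphism is compatible with the Galois action: a Galois automorphism permutes the preperiodic points of $f_t$ in a way that commutes with $f_t$, and its restriction to the roots of $\Phi_{M_i,N_i}$ recovers the corresponding factor.

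I expect the main obstacle to be the careful bookkeeping in combining the two steps — i.e., verifying that the abstract isomorphism $\Gal(K_G/\bbC(t)) \cong \prod \Gal(L_{M_i,N_i}/\bbC(t))$ coming from linear disjointness really does match, \emph{as a group acting on the vertex set of $G$}, the combinatorial isomorphism $\Aut(G) \cong \prod \Aut(G_{M_i,N_i})$. Concretely, one fixes a point $(\balpha,c)$ on the fiber product lying over a sufficiently generic $c$ (so that the corresponding subgraph of $G(f_c,\QQbar)$ really is isomorphic to $G$, using Corollary~\ref{cor:finite_ramified} and Proposition~\ref{prop:right_graph}), and checks that the Galois action on the coordinates $\balpha$ is exactly the action of $\Aut(G)$ on the marked vertices. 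Granting the linear disjointness from Lemma~\ref{lem:fiber_product_branch} and the irreducibility already established in Theorem~\ref{thm:main_irred}, this last verification is routine, so the lemma follows.
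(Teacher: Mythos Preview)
Your proposal is correct and follows essentially the same route as the paper: decompose $G = \bigsqcup G_{M_i,N_i}$, use Corollary~\ref{cor:branch} together with Lemma~\ref{lem:fiber_product_branch} (as in the proof of Theorem~\ref{thm:main_irred}) to get linear disjointness and hence $\Gal(K_G/\bbC(t)) \cong \prod_i \Gal(K_{G_{M_i,N_i}}/\bbC(t))$, then invoke Corollary~\ref{cor:gal_closure} and the combinatorial fact $\Aut(G) \cong \prod_i \Aut(G_{M_i,N_i})$. The paper's proof is terser and simply concatenates these three isomorphisms without dwelling on the compatibility step you flag at the end; your extra care there is not misplaced, but it is indeed routine and the paper treats it as such.
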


\begin{proof}
Since $G$ is normal, we write
	\[
		G = \bigsqcup_{i=1}^n G_{M_i,N_i}
	\]
for some collection of integers $M_i \ge 0$ and $N_i \ge 1$, with the $N_i$ distinct. Note that because the cycle lengths of the graphs $G_{M_i,N_i}$ are distinct, we have
	\[
		\Aut(G) \cong \prod_{i=1}^n \Aut(G_{M_i,N_i}).
	\]
The proof of Theorem~\ref{thm:main_irred} shows that the function fields of $X_1(G_{M_i,N_i})$ are pairwise linearly disjoint over $\bbC(t)$, so
	\[
		\Gal(K_G/\bbC(t)) \cong \prod_{i=1}^n \Gal(K_{G_{M_i,N_i}}/\bbC(t)).
	\]
Finally, Corollary~\ref{cor:gal_closure} gives the isomorphism
	\[
		\Gal(K_{G_{M_i,N_i}}/\bbC(t)) \cong \Aut(G_{M_i,N_i}),
	\]
completing the proof.
\end{proof}

We now state our main Galois-theoretic result, which is a more precise statement of Theorem~\ref{thm:main_galois}.

\begin{thm}\label{thm:rep_iso}
Let $L$ be the extension of $\bbC(t)$ generated by all preperiodic points of $f_t$, and let $\G$ be the graph defined in \eqref{eq:limit}. Then
	\[
		\Gal(L/\bbC(t)) \cong \Aut(\G).
	\]
\end{thm}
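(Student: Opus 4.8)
The plan is to exhibit $\rho:\Gal(L/\bbC(t))\hookrightarrow\Aut(\G)$ as an isomorphism by showing it is surjective, using the fact that $\G$ is a direct limit of admissible graphs and that $L$ is correspondingly a union (directed colimit) of the function fields $K_G$. The first step is the observation that $\Aut(\G)=\varprojlim_G\Aut(G)$, where the inverse limit is taken over admissible graphs $G$ ordered by inclusion (an automorphism of $\G$ restricts compatibly to each admissible subgraph, since by Lemma~\ref{lem:graph_iso} graph automorphisms commute with $\f$, and conversely a compatible system of automorphisms glues). Dually, $\Gal(L/\bbC(t))=\varprojlim_G\Gal(K_G/\bbC(t))$ by infinite Galois theory, since $L=\bigcup_G K_G$ and each $K_G/\bbC(t)$ is a finite extension sitting inside $L$. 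The map $\rho$ is then the inverse limit of the maps $\rho_G:\Gal(K_G/\bbC(t))\to\Aut(G)$, so it suffices to show each $\rho_G$ is an isomorphism — but this only works cleanly if $K_G/\bbC(t)$ is Galois, which need not hold for arbitrary admissible $G$.

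To get around that, I would restrict the directed system to \emph{normal} graphs (Definition~\ref{defn:normal}), which is cofinal in the system of all admissible graphs: every admissible $G$ embeds in its normal closure $G'$, and normal closures are normal. So $\G=\varinjlim_{G\text{ normal}}G$, $\Aut(\G)=\varprojlim_{G\text{ normal}}\Aut(G)$, and $L=\bigcup_{G\text{ normal}}K_G$, whence $\Gal(L/\bbC(t))=\varprojlim_{G\text{ normal}}\Gal(K_G/\bbC(t))$. Now for each normal graph $G$, Lemma~\ref{lem:galois} gives $\Gal(K_G/\bbC(t))\cong\Aut(G)$; I must check that this isomorphism is compatible with the transition maps on both sides (it is $\rho_G$, which sends a field automorphism to its action on the marked preperiodic points, i.e. on the vertices of the graph — and restriction of field automorphisms corresponds to restriction of graph automorphisms). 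Taking the inverse limit over the normal system then yields $\rho:\Gal(L/\bbC(t))\xrightarrow{\ \sim\ }\Aut(\G)$.

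The one point that requires care — and is the main obstacle — is verifying that the isomorphisms of Lemma~\ref{lem:galois} assemble into a \emph{compatible} system, i.e. that for normal $G\subseteq H$ the square relating $\Gal(K_H/\bbC(t))\to\Gal(K_G/\bbC(t))$ (restriction of automorphisms, which makes sense because $K_G\subseteq K_H$ and $K_G/\bbC(t)$ is Galois by Lemma~\ref{lem:galois}) and $\Aut(H)\to\Aut(G)$ (restriction of graph automorphisms) commutes. This boils down to unwinding the identification in Corollary~\ref{cor:gal_closure}: an element $\sigma\in\Gal(K_{G_{M,N}}/\bbC(t))=\Gal(L_{M,N}/\bbC(t))$ acts on the roots of $\Phi_{M,N}(x,t)$, i.e. on the portrait-$(M,N)$ preperiodic points, exactly as the corresponding automorphism of $G_{M,N}$ permutes its vertices, and these actions are manifestly compatible with inclusions $G_{M,N}\subseteq G_{M',N'}$ and with taking disjoint unions over distinct cycle lengths (using the linear disjointness established in the proof of Theorem~\ref{thm:main_irred}). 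Once compatibility is in hand, passing to the inverse limit is formal. I would also remark that this recovers Morton's result on periodic points \cite{morton:1998gal} by restricting to the subgraph of $\G$ consisting of periodic vertices.
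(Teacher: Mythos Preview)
Your proposal is correct and follows essentially the same route as the paper: restrict to the cofinal system of normal graphs, identify both sides as inverse limits over that system, invoke Lemma~\ref{lem:galois} termwise, and check that the resulting isomorphisms are compatible with restriction. The paper's proof is slightly terser about the compatibility step (it just asserts the diagram commutes), but the content is the same.
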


\begin{proof}
We first note that since every admissible graph is contained in a normal graph, we also have
	\[
		\G = \varinjlim_{\substack{\text{normal}\\\text{graphs } G}} G.
	\]
If $G \hookrightarrow G'$ is an inclusion of normal graphs, then any automorphism of $G'$ restricts to an automorphism of $G$, since an automorphism of $G'$ must preserve preperiodic portraits. It follows that the groups $\Aut(G)$ with $G$ normal form an inverse system, with connecting maps given by restriction; moreover, the automorphism group of $\G$ is the inverse limit
	\begin{equation}\label{eq:invlim1}
		\Aut(\G) = \varprojlim_{\substack{\text{normal}\\\text{graphs } G}} \Aut(G).
	\end{equation}
On the other hand, we also have
	\[
		L = \varinjlim_{\substack{\text{admissible}\\\text{graphs } G}} K_G = \varinjlim_{\substack{\text{normal}\\\text{graphs } G}} K_G,
	\]
where $K_G = \bbC(X_1(G))$ as above; consequently,
	\begin{equation}\label{eq:invlim2}
		\Gal(L/\bbC(t)) = \varprojlim_{\substack{\text{normal}\\\text{graphs } G}} \Gal(K_G/\bbC(t)),
	\end{equation}
where again the connecting maps are given by restriction. From Lemma~\ref{lem:galois}, the groups $\Gal(K_G/\bbC(t))$ and $\Aut(G)$ are isomorphic, and in fact it is straightforward to see that for any inclusion $G \hookrightarrow G'$, the following diagram commutes:
\begin{center}
%    \begin{tikzpicture}[scale=4.5]
%\tikzset{vertex/.style = {}}
%\tikzset{edge/.style={decoration={markings,mark=at position 1 with %
%    {\arrow[scale=1.5,>=stealth]{>}}},postaction={decorate}}}
%\tikzset{loop/.style={min distance=10mm, in=-45, out=-135, decoration={markings,mark=at position 0.99 with %
%    {\arrow[scale=1.5,>=stealth]{>}}}, postaction={decorate}}}
%% vertices
%\node[vertex] (KG') at (0,.3) {$\Gal(K_{G'}/\bbC(t))$};
%\node[vertex] (KG) at (1,.3) {$\Gal(K_{G}/\bbC(t))$};
%\node[vertex] (G') at (0,0) {$\Aut(G')$};
%\node[vertex] (G) at (1,0) {$\Aut(G)$};
%%
%\draw[edge] (KG') to node[auto]{\scriptsize$\sigma \mapsto \left.\sigma\right|_{K_G}$} (KG);
%\draw[edge] (G') to node[auto]{\scriptsize$\tau \mapsto \left.\tau\right|_{G}$} (G);
%\draw[edge] (KG) to node[auto]{$\wr$} (G);
%\draw[edge] (KG') to node[auto]{$\wr$} (G');
%\end{tikzpicture}
	\includegraphics{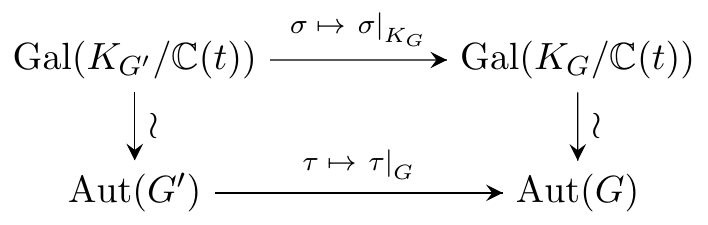}
\end{center}
%\[
%	\begin{tikzcd}[column sep=large]
%	\Gal(K_{G'}/\bbC(t)) \isoarrow{d} \arrow{r}{\sigma \mapsto \sigma|_{K_G}}
%		& \Gal(K_{G}/\bbC(t)) \isoarrow{d}\\
%	\Aut(G') \arrow{r}{\tau \mapsto \tau|_G}
%		& \Aut(G)
%	\end{tikzcd}.
%\]
Taking the inverse limits in \eqref{eq:invlim1} and \eqref{eq:invlim2} yields the result.
\end{proof}

\begin{rem}
Theorem~\ref{thm:rep_iso} is the extension to preperiodic points of a result due to Morton regarding periodic points. Morton showed in \cite[Thm. D]{morton:1998gal} that if $K \subset L$ is the extension of $\bbC(t)$ generated by adjoining all periodic points of $f_t$, then $\Gal(K/\bbC(t))$ consists of all permutations of the periodic points that commute with $f_t$. This is equivalent to saying that $\Gal(K/\bbC(t)) \cong \Aut(\H)$, where $\H \subset \G$ is the direct limit of all admissible graphs generated by periodic points, again ordered by inclusion; that is, $\H$ is the disjoint union
	\[
		\H = \bigsqcup_{N=1}^\infty G_{0,N}.
	\]
As noted immediately following Theorem~\ref{thm:bousch}, if we fix $N \ge 1$, then the group of allowable permutations of the points of period $N$ is isomorphic to a certain wreath product
	\[
		\bbZ/N\bbZ \wr S_{R(N)},
	\]
where $S_{R(N)}$ denotes the symmetric group on $R(N)$ letters; hence
	\begin{equation}\label{eq:wreath}
		\Gal(K/\bbC(t)) \cong \Aut(\H) \cong \prod_{N = 1}^\infty \left(\bbZ/N\bbZ \wr S_{R(N)}\right).
	\end{equation}
\end{rem}

Theorem~\ref{thm:rep_iso} actually fits into the more general framework of a natural Galois representation arising from preperiodic points for quadratic maps:

\begin{defn}
Let $K$ be a field, and let $f(z) \in K[z]$ be a quadratic polynomial. Let $\G_f = G(f,\Kbar)$ be the full directed graph of preperiodic points for $K$, and let
	\[
		\rho_{f,K} : \Gal(\Kbar/K) \to \Aut(\G_f)
	\]
be restriction to $\G_f$. We call $\rho_{f,K}$ the {\bf preperiodic Galois representation} over $K$ associated to $f$. In the case $f = f_c$ for some $c \in K$, we write $\G_c$ and $\rho_{c,K}$ for $\G_{f_c}$ and $\rho_{f_c,K}$, respectively.
\end{defn}

In characteristic zero, it is typically the case that $\G_f \cong \G$, so at the expense of excluding certain quadratic maps $f$, one could replace $\Aut(\G_f)$ with $\Aut(\G)$ so that the target of the representation does not depend on $f$. We note that Theorem~\ref{thm:rep_iso} is equivalent to saying the preperiodic Galois representation over $\bbC(t)$ associated to $f_t$ is surjective.

\begin{rem}\label{rem:arb}
The preperiodic Galois representation is related to another dynamically defined Galois representation called the {\it arboreal representation}. Let $\T$ denote the infinite complete rooted binary tree. If $f$ is a quadratic\footnote{The arboreal representation may be defined for rational maps of arbitrary degree $d \ge 2$, with the binary tree replaced by a $d$-ary tree, but we will restrict to $d = 2$ for the purposes of this discussion.} polynomial, and if $\alpha \in K$ is not in the orbit of the critical point of $f$, then there is a map $\rho_{f,\alpha,K} : \Gal(\Kbar/K) \to \Aut(\T)$ given by restriction to the set
	\[
		T(f,\alpha) = \{\beta \in \Kbar : f^m(\beta) = \alpha \text{ for some } m \ge 0\}
	\]
of iterated preimages of $\alpha$ under $f$, which may be given the structure of a directed graph isomorphic to $\T$. The map $\rho_{f,\alpha,K}$ is the arboreal representation over $K$ associated to $(f,\alpha)$. We recommend that the interested reader see \cite{jones:2013} for a nice survey on the topic.

The connection between these two representations lies in the fact that the automorphism group of the tree $\T$ appears in both. Suppose for the sake of exposition that $\G_f \cong \G$, so that the target of the preperiodic representation $\rho_{f,K}$ is $\Aut(\G)$. The group $\Aut(\G)$ has the structure of a wreath product $\Aut(\T) \wr \Aut(\H)$, where $\Aut(\H)$ is itself isomorphic to the direct product of wreath products in \eqref{eq:wreath}. One can see this by considering an automorphism $\sigma\in\Aut(\G)$ as acting in two steps: First, $\sigma$ acts by a graph automorphism on the tree $T(\f,P) \cong \T$ for {\it each} vertex $P$ of preperiod 1; the group of such automorphisms is
	\[
		\prod_{\substack{\text{$P$ a vertex}\\\text{of preperiod 1}}} \Aut(T_{\f,P}) \cong \Aut(\T)^\bbN.
	\]
Second, $\sigma$ acts by an automorphism of $\H$; since $\H$ contains both the periodic points and the points of preperiod 1, $\sigma$ also permutes the trees $T(\f,P)$ {\it as sets}. Though we do not provide the details of the proof, this conceptual description of $\Aut(\G)$ suggests that $\Aut(\G) \cong \Aut(\T)^\bbN \rtimes \Aut(\H)$, and this particular semidirect product is isomorphic to the wreath product $\Aut(\T) \wr \Aut(\H)$.
\end{rem}

As mentioned in the introduction, preperiodic points are a dynamical analogue of torsion points on elliptic curves, so one may ask whether the preperiodic representation exhibits properties known to be satisfied by the adelic Galois representation associated to elliptic curves. In particular, it would be interesting to know whether an analogue of Serre's open image theorem \cite[Thm. 3]{serre:1972} holds for this particular representation; i.e., we would like to know whether the image of $\rho_{f,K}$ in $\Aut(\mathbf{G}_f)$ should generally have finite index and, if so, to classify those exceptional quadratic maps $f(z) \in K[z]$ for which the index is infinite.

One obstruction to the image of $\rho_{f,K}$ having finite index is if $f$ is PCF. This is due to the fact that post-critical finiteness is a known obstruction (see \cite[Thm. 3.1]{jones:2013}) to the finite index problem for the arboreal representation mentioned in Remark~\ref{rem:arb}. We therefore pose the following question:

\begin{ques}\label{ques:index}
Let $K$ be a number field, let $f(z) \in K[z]$ be a quadratic polynomial, and assume $f$ is not PCF. Does the image of $\rho_{f,K}$ have finite index in $\Aut(\mathbf{G}_f)$?
\end{ques}

At present, this appears to be a difficult question. It would be interesting to have even one example of a parameter $c_0 \in \bbQ$ for which the image of $\rho_{c_0,\bbQ}$ is known to have finite index. Note that if we had a non-PCF parameter $c_0 \in \bbQ$ for which $\rho_{c_0,\bbQ}$ were {\it surjective} and for which $f_{c_0}$ had $D(N)$ points of period $N$ over $\QQbar$ for all $N \in \bbN$ (for example, if $c_0 \in \bbZ \setminus \{-2,-1,0\}$), then a much stronger version of Theorem~\ref{thm:realize} would hold: For each strongly admissible graph $G$, there would be a number field $K$ for which $G(f_{c_0},K) \cong G$.

As is often the case with questions of this type, the finite index problem becomes more tractable when we consider the function field setting. If $K$ is a function field over an algebraically closed field $F$, we say that a polynomial $f(z) \in K(z)$ is {\bf isotrivial} if $f$ is linearly conjugate to a polynomial defined over the field of constants $F$. We conclude with the following consequence of Theorem~\ref{thm:rep_iso}.

\begin{thm}
Let $K$ be a finitely generated extension of $\bbC$. For any non-isotrivial quadratic polynomial $f(z) \in K[z]$, the image of $\rho_{f,K}$ has finite index in $\Aut(\G_f) \cong \Aut(\G)$.
\end{thm}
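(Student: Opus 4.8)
The plan is to deduce this from Theorem~\ref{thm:rep_iso} by a standard ``natural irrationalities'' argument, the only non-formal input being the fact that a finitely generated field extension contains the relative algebraic closure of its base field only to finite degree.

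First I would normalize $f$. Every quadratic polynomial in $K[z]$ is linearly equivalent over $K$ to some $f_c$ with $c \in K$, so we may assume $f = f_c$. Non-isotriviality means $f_c$ is not linearly conjugate to a polynomial over the constant field $\bbC$; since a short computation shows $f_c$ is linearly conjugate to $f_{c'}$ only when $c = c'$, this forces $c \notin \bbC$, i.e., $c$ is transcendental over $\bbC$. Fix the resulting embedding $\bbC(t) \hookrightarrow K$, $t \mapsto c$. Because $c$ is transcendental, $f_c$ is not PCF (a PCF parameter is an algebraic integer), so $\G_f = G(f_c,\Kbar) \cong \G$; moreover $\disc_x \Phi_{M,N}(x,t) \in \bbZ[t]$ is a nonzero polynomial for each $M,N$, hence does not vanish at the transcendental value $c$, so $f_c$ has the full complement of preperiodic points over $\Kbar$, consistent with $\G_f \cong \G$.

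Next, inside $\Kbar$ let $L_0$ be the subfield generated over $\bbC(t)$ by all preperiodic points of $f_c$, and set $L := K \cdot L_0$. Since the preperiodic points of $f_c$ are precisely the roots of the polynomials $\Phi_{M,N}(x,c)$, which have coefficients in $\bbZ[c] \subseteq \bbC(t)$, the extension $L_0/\bbC(t)$ is the splitting field of $\{\Phi_{M,N}\}_{M,N \ge 1}$, hence Galois; identifying the relative algebraic closure of $\bbC(t)$ in $\Kbar$ with an algebraic closure of $\bbC(t)$, this $L_0$ is exactly the field ``$L$'' of Theorem~\ref{thm:rep_iso} (transported along $t \mapsto c$), so $\Gal(L_0/\bbC(t)) \cong \Aut(\G)$ via the action on preperiodic points. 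Now $L = K(\text{preperiodic points of }f_c)$ is normal over $K$ and contains every vertex of $\G_f$, so $\rho_{f,K}$ factors through $\Gal(L/K)$, and the induced map $\Gal(L/K) \to \Aut(\G_f)$ is injective because the preperiodic points generate $L$ over $K$; thus $\im \rho_{f,K} \cong \Gal(L/K)$. By natural irrationalities, restriction to $L_0$ gives an isomorphism $\Gal(L/K) \cong \Gal(L_0/(L_0 \cap K))$ compatible with the actions on preperiodic points (those points lie in $L_0$). Under the identifications $\Aut(\G_f) \cong \Aut(\G) \cong \Gal(L_0/\bbC(t))$, the image of $\rho_{f,K}$ therefore corresponds to the closed subgroup $\Gal(L_0/(L_0 \cap K))$, whose index equals $[L_0 \cap K : \bbC(t)]$.

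It remains to see this degree is finite, and this is where the hypothesis enters. The field $L_0 \cap K$ is algebraic over $\bbC(t)$ and contained in $K$, hence lies in the relative algebraic closure of $\bbC(t)$ in $K$. Since $K$ is finitely generated over $\bbC$, it is finitely generated over $\bbC(t) = \bbC(c)$, and the relative algebraic closure of a field inside a finitely generated extension is finite over that field: choosing a transcendence basis from among a finite generating set, the basis stays algebraically independent over the relative algebraic closure, and comparing degrees over the resulting rational function field bounds that closure. Hence $[L_0 \cap K : \bbC(t)] < \infty$, so $\im \rho_{f,K}$ has finite index in $\Aut(\G_f) \cong \Aut(\G)$. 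I expect no deep obstacle once Theorem~\ref{thm:rep_iso} is in hand: the real content is the finiteness of the relative algebraic closure (the single place the finitely-generated hypothesis is used, and a standard fact), while the main point of care is the routine but fiddly bookkeeping ensuring that all the above identifications of Galois groups with graph automorphism groups are mutually compatible.
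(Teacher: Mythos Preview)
Your proposal is correct and follows essentially the same approach as the paper: normalize to $f_c$ with $c$ transcendental over $\bbC$, apply natural irrationalities to identify $\Gal(KL_0/K)$ with $\Gal(L_0/(K\cap L_0))$, invoke Theorem~\ref{thm:rep_iso} for the surjectivity over $\bbC(t)$, and use finite generation of $K/\bbC(t)$ to conclude $[K\cap L_0:\bbC(t)]<\infty$. Your write-up actually supplies a few details the paper leaves implicit (the verification that $\G_f\cong\G$ and the reason the relative algebraic closure is finite), but the argument is the same.
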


\begin{proof}
By making a change of variables, we assume $f = f_u$ for some $u \in K$, and since $f$ is non-isotrivial, we further assume $u \notin \bbC$. Note that this implies that $K \supsetneq \bbC$ and $u$ is transcendental over $\bbC$. Let $L$ and $E = KL$ denote the fields obtained by adjoining all preperiodic points for $f_u$ to $\bbC(u)$ and $K$, respectively, so we have the following diagram:
\begin{center}
%\begin{tikzpicture}[scale=.85]
%\tikzset{vertex/.style = {}}
%\tikzset{edge/.style = {}}
%\node[vertex] (E) at  (0,2.5) {$E$};
%\node[vertex] (K) at  (0,1.25) {$K$};
%\node[vertex] (L) at  (1.5,1.875) {$L$};
%\node[vertex] (int) at  (1.5,.625) {$K \cap L$};
%\node[vertex] (Cu) at  (1.5,-.625) {$\bbC(u)$};
%%edges
%\draw[edge] (E) to (K);
%\draw[edge] (E) to (L);
%\draw[edge] (K) to (int);
%\draw[edge] (L) to (int);
%\draw[edge] (int) to (Cu);
%\end{tikzpicture}
	\includegraphics{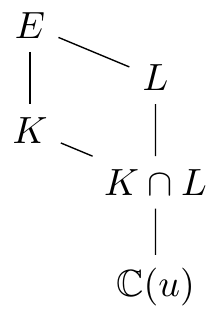}
\end{center}
Since $L/\bbC(u)$ is Galois, it follows from standard Galois theory (see \cite[Prop. 7.14]{milne:FGT53}) that the natural restriction map from $\Gal(E/K)$ to $\Gal(L/(K\cap L))$ is an isomorphism. Now, $K$ is a finitely generated extension of $\bbC(u)$ and $L$ is an algebraic extension of $\bbC(u)$, so $K \cap L$ is a finite extension of $\bbC(u)$. Thus $\Gal(L/(K\cap L))$ is a finite index subgroup of $\Gal(L/\bbC(u))$, and therefore the embedding
	\[
		\iota : \Gal(E/K) \overset{\sim}{\rightarrow} \Gal(L/(K \cap L)) \hookrightarrow \Gal(L/\bbC(u))
	\]
has image with finite index in its codomain. Since $\rho_{u,K} = \rho_{u,\bbC(u)} \circ \iota$, and since $\rho_{u,\bbC(u)}$ is an isomorphism by Theorem~\ref{thm:rep_iso}, we conclude that the image of $\rho_{u,K}$ has finite index in $\Aut(\G_u)$.
\end{proof}

\bibliography{C:/Dropbox/jdoyle}

\bibliographystyle{amsplain}

\end{document}